% !pdflatex
%\documentclass[12pt,twoside,draft]{amsart}
\documentclass[12pt,draft]{amsart}
\usepackage{amssymb}
\usepackage{latexsym}
\usepackage{amsfonts}
\usepackage{amsmath}
\usepackage{color}
\usepackage{multicol}
\usepackage{lscape}

\oddsidemargin 0pt  
\evensidemargin 0pt
\textheight 8.1in \textwidth 6.3in

\relpenalty=10000
\binoppenalty=10000
\tolerance=500

\newtheorem{theorem}{Theorem}[section]
\newtheorem{proposition}[theorem]{Proposition}
\newtheorem{lemma}[theorem]{Lemma}
\newtheorem{corollary}[theorem]{Corollary}

\theoremstyle{definition}

\newtheorem{example}[theorem]{Example}

\mathsurround=1pt
\headheight 14pt

\renewcommand{\leq}{\leqslant}
\renewcommand{\geq}{\geqslant}
\newcommand{\Z}{\mathbb Z}
\newcommand{\Q}{\mathbb Q}

\newcommand{\F}{\mathbb F}

\newcommand{\n}{\mathfrak n}
\newcommand{\bft}{\mathbf t}

\newcommand{\bfy}{\mathbf y}
\newcommand{\bfu}{\mathbf u}

\newcommand{\g}{\mathfrak g}

\newcommand{\fracfield}[1]{\textrm{Frac}(#1)}

\newcommand{\slg}{\mathfrak{sl}_2}
\newcommand{\dprod}[3]{#1\circ_{#3} #2}

\begin{document}

\title[]{The center and invariants of standard filiform Lie algebras}
\author{Vanderlei Lopes de Jesus}
\address[Lopes]{Departamento de Matem\'atica\\
Instituto de Ci\^encias Exatas\\
Universidade Federal de Minas Gerais\\
Av.\ Ant\^onio Carlos 6627\\
Belo Horizonte, MG, Brazil. Email:
{\rm\texttt{vanderleilopesbh@gmail.com}}}

\author{Csaba Schneider}
\address[Schneider]{Departamento de Matem\'atica\\
Instituto de Ci\^encias Exatas\\
Universidade Federal de Minas Gerais\\
Av.\ Ant\^onio Carlos 6627\\
Belo Horizonte, MG, Brazil. Email:
{\rm\texttt{csaba@mat.ufmg.br}}, URL: {\rm\texttt{schcs.github.io/WP/}}
%{\tt
%\href{http://www.mat.ufmg.br/  csaba}
%{http://www.mat.ufmg.br/$\sim$csaba}}
}

\begin{abstract}
    This paper describes the centers 
    of the universal enveloping algebras and the invariant rings of the standard filiform Lie algebras over fields of characteristic zero and also over large enough prime characteristic.  We determine explicit generators 
    for the quotient fields and also a compact form for the generators for 
    the invariants rings. We prove several combinatorial results concerning the Hilbert series 
    of these algebras.
\end{abstract}

\date{\today}
\keywords{Lie algebras, nilpotent Lie algebras, filiform Lie algebras,
  universal enveloping algebras, invariants, Hilbert series}

\subjclass[2010]{17B35, 17B30, 16U70, 16W22, 17-08}

\maketitle

\section{Introduction}\label{section1}

The problem of describing the center $Z(\g)$ of the universal enveloping algebra for nilpotent Lie algebras $\g$ has received quite a bit of 
attention. Dixmier~\cite{dixmier58} determined $Z(\g)$ for nilpotent Lie algebras of dimension up to~5, 
and Ooms in~\cite{Ooms1,Ooms2} extended this work to algebras of dimension at most~7. Considering the 
descriptions of the centers $Z(\g)$ for these small-dimensional nilpotent Lie algebras, it becomes apparent
that the most complicated structure is exhibited in the cases when $\g$ is standard filiform.
So we decided that the study of the center $Z(\g)$ for standard filiform Lie algebras $\g$ is worth special attention. 
In this paper we consider both the cases of characteristic zero and prime characteristic.

The {\em standard filiform Lie algebra} $\g(n+2)$ is the Lie algebra over a field 
$\F$ with basis 
$\{x,y_0,y_1,\ldots,y_{n}\}$ and nonzero brackets $[x,y_i]=y_{i-1}$ for 
$i\in\{1,\ldots,n\}$ (in particular, $[x,y_0]=0$ and $y_0$ is central). 
%Let us suppose that $\F$ is a field of characteristic zero; 
%the characteristic $p$ situation will be analysed later.
This is a nilpotent Lie algebra of dimension $n+2$ and nilpotency class $n+1$; such a Lie algebra 
is also referred to as a {\em Lie algebra of maximal class}. The center of $\g(n+2)$ is one-dimensional 
and is generated by $y_0$.  

As it turns out (and explained in Section~\ref{sec:stfil}), the center $Z(\g)$ of the standard filiform 
Lie algebra $\g=\g(n+2)$ (see Section~\ref{sec:stfil} for the notation) is equal to  
the algebra of polynomial solutions $f\in\F[y_0,\ldots,y_n]$ of the partial differential equation
\[ 
    \sum_{i\geq 0}y_i\frac{\partial f}{\partial y_{i+1}}=0.
\]
The operator $f\mapsto \sum y_i\partial f/\partial y_{i+1}$ is a locally nilpotent derivation,
also 
known as a Weitzenb\"ock derivation,
on the polynomial algebra $\F[y_0,\ldots,y_n]$ and was considered in~\cite{bed1,bed2,bed3,bed4,bed5,bed6}.
The same operator on the polynomial ring $\F[y_0,y_1,\ldots]$ on infinitely many generators is referred
to as the \emph{down operator}, see~\cite{freud}. The crucial observation made by~\cite{bed_arx,bed_ukrj} is 
that the action of this operator on linear polynomials can be extended to an irreducible representation 
of the simple Lie algebra $\slg$ and one can 
use the representation theory of $\slg$ to obtain more information on the center $Z(\g)$. In fact, 
considering the standard basis $\{e,f,h\}$ of $\slg$, the 
homogeneous elements of $Z(\g)$ are homogeneous polynomials in $y_0,y_1,\ldots,y_n$, which 
are $h$-eigenvectors and are annihilated by $e$. 
%of $\slg$ acting on $\F[y_0,\ldots,y_n]$ and
Thus, $Z_n$ is linked to the algebra of $\slg$-covariants of the 
binary form of degree~$n$.  

For $n\geq 1$, let $Z_n$ denote $Z(\g(n+2))$. If the characteristic of the field is zero, then $Z_n$ can 
be viewed as a graded subalgebra of the polynomial algebra $\F[y_0,y_1,\ldots,y_n]$. For $k\geq 0$, let 
$Z_{n,k}$ denote the degree-$k$ homogeneous component of $Z_n$. 

\begin{theorem}\label{th:th1}
    If $\F$ has characteristic zero, then the algebra $Z_n$ has Krull dimension $n$ and the following are valid.
    \begin{enumerate}
        \item The fraction field $\fracfield{Z_n}$ can be written as  
        \[
            \fracfield{Z_n}=\F(z_1,z_2,\ldots,z_n)=\F(w_1,w_2,\ldots,w_n)
        \] 
         where the elements $z_1,z_2,\ldots,z_n$ and 
        $w_1,w_2,\ldots,w_n$ are defined in Section~\ref{sec:expgens}.
        \item $Z_n\subseteq \F[y_0^{-1},z_1,z_2,\ldots,z_n]$ and $Z_n\subseteq 
        \F[y_0^{-1},w_1,\ldots,w_n]$. 
    \end{enumerate}
\end{theorem}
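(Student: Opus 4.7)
The approach rests on the identification (established earlier in the excerpt) of $Z_n$ with the kernel of the locally nilpotent Weitzenb\"ock derivation $D = \sum_{i\geq 0} y_i\,\partial/\partial y_{i+1}$ on the polynomial algebra $R_0 = \F[y_0,\ldots,y_n]$. All three claims of the theorem will be obtained from the standard structure theory for kernels of locally nilpotent derivations applied after localizing at the central generator $y_0$. For the Krull dimension, since $D$ is a nonzero locally nilpotent derivation, its ring of constants has transcendence degree $\dim R_0 - 1 = n$ over $\F$; because $D$ integrates to an algebraic $\slg$-action, Weitzenb\"ock's theorem in characteristic zero gives finite generation of $Z_n$, so $\dim Z_n = n$.

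Next, set $R = R_0[y_0^{-1}]$. Since $y_0 \in \ker D$, the derivation extends to $R$ with kernel $R^D = Z_n[y_0^{-1}]$. The element $s = y_1/y_0 \in R$ is a slice, i.e.\ $Ds = 1$, so the slice theorem gives a polynomial decomposition $R = R^D[s]$ together with an explicit retraction of $R$ onto $R^D$ by the Dixmier map
\[
\Phi(a) = \sum_{k\geq 0} \frac{(-s)^k}{k!} D^k(a),
\]
which is an $\F$-algebra homomorphism $R \to R^D$ restricting to the identity on $R^D$. Applying $\Phi$ to $y_0, y_2, \ldots, y_n$ produces $n$ algebraically independent elements of $R^D$ (with $\Phi(y_0) = y_0$) that generate $R^D$ as an $\F$-algebra once $y_0$ is inverted.

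For part~(i) I would match the elements $z_1,\ldots,z_n$ and $w_1,\ldots,w_n$ defined in Section~\ref{sec:expgens} against this canonical generating set of $R^D$, checking that each is an $\F[y_0^{\pm 1}]$-combination of the $\Phi(y_i)$ and that each family is algebraically independent of cardinality $n$; by transcendence degree this forces $\fracfield{R^D} = \F(z_1,\ldots,z_n) = \F(w_1,\ldots,w_n)$, and since localization does not alter the fraction field, $\fracfield{Z_n} = \fracfield{R^D}$. For part~(ii), given $f \in Z_n \su R^D$, I would expand $f$ as a polynomial in $z_1,\ldots,z_n$ with coefficients in $\F[y_0^{\pm 1}]$ using that these elements generate $R^D$; positive powers of $y_0$ are recoverable from the explicit definitions in Section~\ref{sec:expgens} (typically one of the $z_i$ will be $y_0$ itself), placing $f$ in $\F[y_0^{-1}, z_1, \ldots, z_n]$, and analogously for the $w_i$.

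I expect the main obstacle to be the second equality in~(i), namely showing directly that the two explicit families define the same subfield of $\F(y_0,\ldots,y_n)$. This is not a formal consequence of either system generating $R^D$, since both systems likely generate $R^D$ only after inverting $y_0$, and will require an explicit birational change of variables computed from the definitions in Section~\ref{sec:expgens}, together with a careful verification of algebraic independence for both families.
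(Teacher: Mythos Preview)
Your slice-theorem approach is a legitimate route and is genuinely different from what the paper does. The paper never invokes the Dixmier map or the general structure theory of locally nilpotent derivations; instead it proves an elementary ``triangularity'' theorem (Theorem~\ref{th:Zgen}): for any graded subalgebra $Z\subseteq\F[\bfy_n]$ whose elements have leading monomials (in the lex order $y_n>y_{n-1}>\cdots>y_0$) not divisible by $y_1$, any family $z_1=y_0,\,z_2,\ldots,z_n\in Z$ with leading terms of the shape $y_iy_0^{k_i}$ automatically satisfies all the conclusions of Theorem~\ref{th:th1}. The paper then verifies that $Z_n$ has this leading-monomial property (an easy induction using Lemma~\ref{lem:gi}) and computes the leading terms of the explicit $z_i$ and $w_i$ (Lemmas~\ref{lem:leadtermz} and~\ref{lem:wi}). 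This treats both families symmetrically with no birational bookkeeping. Your approach, by contrast, is tailor-made for the $w_i$: indeed $w_m=y_0^{m-1}\Phi(y_m)$ exactly, so the slice theorem gives $R^D=\F[y_0^{\pm1},w_2,\ldots,w_n]$ on the nose, which is a nice conceptual explanation of where the $w_i$ come from.

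There is, however, a genuine gap in your handling of the $z_i$. You write that once the $z_i$ are shown to be $n$ algebraically independent elements of $\fracfield{Z_n}$, ``by transcendence degree this forces $\fracfield{R^D}=\F(z_1,\ldots,z_n)$''. That inference is false: $n$ algebraically independent elements in a field of transcendence degree $n$ need not generate it (e.g.\ $\F(w_1^2,w_2,\ldots,w_n)\subsetneq\F(w_1,\ldots,w_n)$). Equality of transcendence degree only gives that $\fracfield{Z_n}$ is \emph{algebraic} over $\F(z_1,\ldots,z_n)$, not equal to it. Consequently your argument for part~(ii) also breaks, since you invoke ``these elements generate $R^D$'' for the $z_i$ without having established it. You do flag, at the end, that an explicit birational change of variables between the two families may be needed---that would indeed repair the gap, but it is exactly the nontrivial computation your plan was hoping to avoid. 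The cleanest fix is the paper's: observe that each $z_i$ has leading term a scalar times $y_iy_0^{k_i}$, and run the monomial-reduction argument of Theorem~\ref{th:Zgen} directly. This simultaneously yields $Z_n\subseteq\F[y_0^{-1},z_1,\ldots,z_n]$ (hence the fraction-field statement) and algebraic independence, with no need to pass through the $w_i$ or $\Phi$ at all.
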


In Section~\ref{sec:expgens} we define an operation $\circ_d$ between two homogeneous elements of $Z_n$ 
that are also $h$-eigenvectors  for the generator $h$ of the Cartan subalgebra of $\slg$. 
We show that $Z_n$ is linearly spanned by elements of the form 
$y_0\circ_{d_1}y_0\circ_{d_2}\cdots \circ_{d_k} y_0$ 
and one can choose minimal algebra generating sets out of such elements. Using computer calculation, we explicitly determine such generating sets for $Z_{n}$ for $n\leq 6$ and for $n=8$ in the 
Appendix~\ref{app:gens}.  

In Section~\ref{sec:hilb}, we consider the Hilbert series 
\[
    H_n(t)=\sum_{k\geq 0} \dim Z_{n,k}t^k.
\]
It is interesting that the dimensions $\dim Z_{n,k}$ occur also in the context of representation theory
and combinatorics. 

\begin{theorem}\label{th:th2}
    Assuming that $\F$ has characteristic zero, that $n\geq 1$ and $k\geq 1$, the following are true for $\dim Z_{n,k}$:
    \begin{enumerate}
        \item $\dim Z_{n,k}$ coincides with the number of irreducible components 
        of the $\slg$-module $S^k(U_n)$ where $U_n$ is the $(n+1)$-dimensional irreducible representation 
        of $\slg$ and $S^k$ is the $k$-th symmetric power.
        \item $\dim Z_{n,k}$ is equal to the number of partitions 
        of $\lfloor kn/2\rfloor$ into $k$ blocks each of size at most~$n$ (permitting block size zero). 
        \item $\dim Z_{n,k}=\dim Z_{k,n}$.
    \end{enumerate}
\end{theorem}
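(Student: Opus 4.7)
The plan is to exploit the $\slg$-module structure on the polynomial algebra $\F[y_0,\ldots,y_n]$ recalled in the introduction. Writing $U_n$ for the linear span of $y_0,\ldots,y_n$, we have $\F[y_0,\ldots,y_n]=S^\bullet(U_n)$ with degree-$k$ component $S^k(U_n)$, and the Weitzenb\"ock derivation whose polynomial solutions make up $Z_n$ is the action of $e\in\slg$. Hence $Z_{n,k}$ is identified with the subspace of vectors in $S^k(U_n)$ annihilated by $e$.

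Part (1) is then immediate from complete reducibility of finite-dimensional $\slg$-representations in characteristic zero: each irreducible summand of $S^k(U_n)$ contributes a one-dimensional space of vectors killed by $e$, so the number of summands equals $\dim\ker\!\left(e|_{S^k(U_n)}\right)=\dim Z_{n,k}$.

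For Part (2), the key step is a weight count. In the convention in which $y_i$ carries $h$-weight $2i-n$, the $\ell$-weight space of $S^k(U_n)$ has dimension equal to the number of tuples $(a_0,\ldots,a_n)\in\Z_{\geq 0}^{n+1}$ with $\sum a_i=k$ and $\sum i a_i=(nk+\ell)/2$. The standard $\slg$-fact that $\dim(\ker e)_\ell=\dim(S^k U_n)_\ell-\dim(S^k U_n)_{\ell+2}$ for $\ell\geq 0$ telescopes to $\dim Z_{n,k}=\dim(S^k U_n)_0+\dim(S^k U_n)_1$. Since all weights in $S^k(U_n)$ have the parity of $nk$, exactly one of these summands is nonzero, and exploiting the symmetry $a_i\leftrightarrow a_{n-i}$ when $nk$ is odd, the count in either case is the number of tuples with $\sum a_i=k$ and $\sum i a_i=\lfloor nk/2\rfloor$. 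Reading such a tuple as the multiset in which $i$ occurs with multiplicity $a_i$ gives exactly the partitions of $\lfloor nk/2\rfloor$ into $k$ blocks of size at most $n$, blocks of size zero allowed. Part (3) then follows at once, since such partitions correspond to Young diagrams of $\lfloor kn/2\rfloor$ boxes inside a $k\times n$ rectangle, and conjugating diagrams is a bijection to Young diagrams of the same size inside an $n\times k$ rectangle, which count $\dim Z_{k,n}$.

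The step likely to need the most care is the telescoping identity together with the parity-and-symmetry argument that lets one read $\dim Z_{n,k}$ from a single weight space; once that is in place, both (2) and (3) reduce to short combinatorial observations.
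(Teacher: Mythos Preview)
Your proposal is correct and follows essentially the same route as the paper: identify $Z_{n,k}$ with $\ker(e)$ on $S^k(U_n)$, use complete reducibility to get part~(1), reduce to the identity $\dim Z_{n,k}=\dim(S^kU_n)_0+\dim(S^kU_n)_1$, translate this into the partition count for part~(2), and conjugate Young diagrams for part~(3). The only cosmetic difference is that you reach the formula $\dim Y_0+\dim Y_1$ via the telescoping of $\dim(\ker e)_\ell=\dim W_\ell-\dim W_{\ell+2}$, whereas the paper observes it directly from the fact that each irreducible summand $U_m$ contributes one to $Y_0$ or $Y_1$ according to the parity of $m$; similarly, where you invoke the symmetry $a_i\leftrightarrow a_{n-i}$ to pass from weight~$1$ to weight~$-1$ in the odd case, the paper simply starts from weight~$-1$.
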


Theorem~\ref{th:th2} follows from Theorem~\ref{th:deltand} (part~(1)) and 
from Theorem~\ref{th:part} (parts~(2) and~(3)).

By the Mauer--Weitzenb\"ock Theorem~\cite[Theorem~6.1]{freudbook}, $Z_n$ is finitely generated and so 
$H_n(t)$ is a rational function. 
The combinatorial description of $\dim Z_{n,k}$ in Theorem~\ref{th:th2} and the computations in 
\cite{zeilberg} make it possible to determine 
the Hilbert series $H_n(t)$ as rational expressions for $n\leq 18$. 
%***
As in the situation considered by Almkvist~\cite{almkvist2,almkvist3}, one also has the following closed formula for $H_n(t)$. 

\begin{theorem}\label{th:th3}
    If $\F$ has characteristic zero, then, for $n\geq 1$, 
    \begin{equation}\label{eq:inteq}
        H_n(t)=\frac{1}{2\pi}\int_{-\pi}^\pi\frac{1+\exp(i\varphi)}{\prod_{k=0}^n(1-t\exp(i(n-2k)\varphi))}\,d\varphi.
    \end{equation}
\end{theorem}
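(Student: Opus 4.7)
The plan is to combine Theorem~\ref{th:th2}(1), a Molien-style generating identity for characters of symmetric powers, and an extraction of the number of $\slg$-irreducible summands via Fourier inversion on the maximal torus of $SU(2)$.

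First I would translate the Hilbert series into a sum of multiplicities. By Theorem~\ref{th:th2}(1), $\dim Z_{n,k}$ equals the number of irreducible summands of $V_k:=S^k(U_n)$. Decompose $V_k\cong\bigoplus_{m\geq 0}c_m U_m$ as an $\slg$-module. Since $U_m$ contributes a one-dimensional weight-$0$ space exactly when $m$ is even, and a one-dimensional weight-$1$ space exactly when $m$ is odd, I would observe the telescoping identity
\[
\dim Z_{n,k}=\sum_{m\geq 0}c_m=\dim(V_k)_0+\dim(V_k)_1,
\]
where $(V_k)_j$ denotes the weight-$j$ subspace of the $h$-action.

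Next I would realize these weight-space dimensions as Fourier coefficients of the character $\chi_{V_k}(\varphi)=\sum_j\dim(V_k)_j\,e^{ij\varphi}$. Fourier inversion gives $\dim(V_k)_0=\frac{1}{2\pi}\int_{-\pi}^\pi\chi_{V_k}(\varphi)\,d\varphi$ and $\dim(V_k)_{-1}=\frac{1}{2\pi}\int_{-\pi}^\pi\chi_{V_k}(\varphi)\,e^{i\varphi}\,d\varphi$. Since $U_n$, and hence $V_k=S^k(U_n)$, is self-dual as an $\slg$-module, $\dim(V_k)_1=\dim(V_k)_{-1}$, so adding the two identities yields
\[
\dim Z_{n,k}=\frac{1}{2\pi}\int_{-\pi}^\pi\chi_{V_k}(\varphi)\bigl(1+e^{i\varphi}\bigr)\,d\varphi.
\]

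Multiplying by $t^k$, summing over $k\geq 0$, and applying the standard identity
\[
\sum_{k\geq 0}\chi_{V_k}(\varphi)\,t^k=\prod_{k=0}^n\frac{1}{1-t\,e^{i(n-2k)\varphi}},
\]
which follows from the fact that $h$ acts on $U_n$ with eigenvalues $n,n-2,\ldots,-n$, I arrive at~\eqref{eq:inteq}. The interchange of summation and integration is justified for $|t|$ sufficiently small by uniform convergence on the unit circle; since both sides are rational functions of $t$, the identity then extends by analytic continuation throughout the domain of convergence of the integral.

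The crux of the argument is the first step: recognising that for any finite-dimensional $\slg$-module $V$ the total number of irreducible summands equals $\dim V_0+\dim V_1$. Once this elementary telescoping observation is in hand, the remainder of the proof is routine Fourier analysis on the circle paired with the Molien identity, and no substantial technical obstacle remains.
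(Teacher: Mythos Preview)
Your proof is correct and follows essentially the same route as the paper's: both identify $\dim Z_{n,k}$ with the sum of the weight-$0$ and weight-$1$ multiplicities in $S^k(U_n)$ (the paper via Corollary~\ref{cor:semiinv}(3)/Theorem~\ref{th:deltand}, you via the telescoping observation), then extract those two Fourier coefficients from the product $\prod_{k=0}^n(1-t\,e^{i(n-2k)\varphi})^{-1}$ using the orthogonality relations on the circle. The only cosmetic differences are that you phrase everything in character/Fourier language and explicitly mention the convergence issue, whereas the paper carries out the geometric-series expansion by hand.
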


The integral formula in Theorem~\ref{th:th3} appears in~\cite{almkvist2}, since in that situation, 
the counting argument also boils down to counting the partitions in Theorem~\ref{th:th2}(2). We include a 
proof in Section~\ref{sec:hilb} for easier reference.

In the final Section~\ref{sec:prime}, we consider the case when the characteristic of the field is a prime. 
In this case, a well-known theorem of 
Zassenhaus states that $Z(\g)$ is a normal domain and our main result states that 
the generators $z_2,\ldots,z_n$ 
in Theorem~\ref{th:th1} generate $Z_n$ over the $p$-center $Z_p(\g)$ up to normal closure.  
 
\begin{theorem}\label{th:th4}
    Suppose that $\F$ has characteristic $p$ and also that $p\geq n+1$. Set $\g=\g(n+2)$.  
    Then $Z(\g)$ is the integral closure of $Z_p(\g)[z_2,\ldots,z_{n}]$ in its field of fractions, and it 
    also coincides with the integral closure of  $Z_p(\g)[z_2,\ldots,z_{n}]$ in the localization
    $Z_p(\g)[y_0^{-1},z_2,\ldots,z_{n}]$. In particular, $Z(\g)\subseteq Z_p(\g)[y_0^{-1},z_2,\ldots,z_{n}]$. 
\end{theorem}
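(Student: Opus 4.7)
The plan is to combine Zassenhaus's normality theorem with a transfer of the characteristic-zero containment of Theorem~\ref{th:th1}(2) to characteristic~$p$. The preliminary set-up is standard: since $p\geq n+1$ exceeds the nilpotency class of $\g=\g(n+2)$, every $a\in\g$ satisfies $(\ad a)^p=0$, so $a^p\in Z(U(\g))$ and $Z_p(\g)=\F[x^p,y_0^p,\ldots,y_n^p]$ is a central polynomial subring of Krull dimension $n+2$ over which $U(\g)$ is free of rank $p^{n+2}$. By Zassenhaus's theorem, $Z(\g)$ is a normal Noetherian integral domain, integral and finitely generated over $Z_p(\g)$. Moreover, the generators $z_2,\ldots,z_n$ from Section~\ref{sec:expgens} are polynomials in $y_0,\ldots,y_n$ whose coefficients have denominators at most $n!$; under $p\geq n+1$ they reduce unambiguously to characteristic~$p$, and the identities guaranteeing their centrality (i.e.\ annihilation by the Weitzenb\"ock derivation $\sum_{i\geq 0} y_i\partial_{y_{i+1}}$) are polynomial identities that persist. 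Hence $R:=Z_p(\g)[z_2,\ldots,z_n]\subseteq Z(\g)$.

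The essential step is the inclusion $Z(\g)\subseteq R':=Z_p(\g)[y_0^{-1},z_2,\ldots,z_n]$, which is the prime-characteristic analogue of Theorem~\ref{th:th1}(2). I would prove this by taking an arbitrary $f\in Z(\g)$, working in the $y_0$-localization $U(\g)[y_0^{-1}]$, and adapting the characteristic-zero algorithm that expresses any central $f$ as a polynomial in $y_0^{-1}$ and $z_1,\ldots,z_n$. The additional $p$-central elements $x^p,y_0^p,\ldots,y_n^p$ play the role of ``scalar constants'' in characteristic~$p$ which were absent in characteristic zero, and the hypothesis $p\geq n+1$ ensures every division in the recursion is by an element coprime to $p$. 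Note that $z_1=y_0$ is automatically absorbed into $R'$, since $y_0=y_0^p\cdot y_0^{-(p-1)}\in R'$.

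With the inclusion $Z(\g)\subseteq R'$ in hand, the remainder is formal. Zassenhaus's theorem gives that $Z(\g)$ is integrally closed in $\fracfield{Z(\g)}$ and contains $R$; since $Z(\g)$ is integral over $R$, it equals the integral closure of $R$ in $\fracfield{Z(\g)}$. The second statement then follows because $R\subseteq Z(\g)\subseteq R'\subseteq\fracfield{Z(\g)}$, so the integral closure of $R$ inside the overring $R'$ is the intersection $Z(\g)\cap R'=Z(\g)$. The hardest part is the middle step: the characteristic-zero proof of Theorem~\ref{th:th1}(2) likely passes through the $\slg$-module structure on $\F[y_0,\ldots,y_n]$, which becomes reducible in characteristic~$p$ once degrees reach $p$. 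The task is therefore to re-derive the expression of an arbitrary central element in terms of the $z_i$ by direct polynomial manipulation in $U(\g)[y_0^{-1}]$ with controlled denominators, and to verify that no additional ``char-$p$-specific'' invariants are required beyond those contributed by $Z_p(\g)$.
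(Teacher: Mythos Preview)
Your final deduction from Zassenhaus's theorem is fine, but the ``essential step'' --- the direct inclusion $Z(\g)\subseteq R':=Z_p(\g)[y_0^{-1},z_2,\ldots,z_n]$ --- is a genuine gap, and the method you propose for it does not work as stated. The characteristic-zero argument (Theorem~\ref{th:Zgen}) rests on the fact that the leading monomial of any element of $Z_n$ avoids $y_1$; in characteristic~$p$ this fails, since $y_1^p$ (and more generally every monomial in $y_1^p,\ldots,y_n^p$) is annihilated by the Weitzenb\"ock derivation. So the induction on the leading monomial cannot be run unchanged, and you give no mechanism for handling these new invariants beyond the remark that they should act as ``scalar constants''. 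You yourself flag this as ``the hardest part'' without resolving it; as written, the proposal is a reduction to an unproved lemma of comparable difficulty to the theorem.

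The paper avoids this difficulty entirely by reversing the logic: rather than proving $Z(\g)\subseteq R'$ directly, it proves that $R'$ is a \emph{regular} ring (hence normal) via the Jacobian criterion, using that the leading terms of the $z_i$ (Lemma~\ref{lem:leadtermz}) make the Jacobian matrix of the relations $t_i^p-z_i^p$ upper triangular with determinant a unit times a power of $y_0$. Separately, a dimension count over $K_p(\g)$ (Theorem~\ref{th:gensp}) shows $K(\g)=K_p(\g)(z_2,\ldots,z_n)$, so $R$, $R'$ and $Z(\g)$ share the fraction field $K$. Then the containment $Z(\g)\subseteq R'$ is a \emph{consequence}: $Z(\g)$ is integral over $R\subseteq R'$, and $R'$ is integrally closed in $K$, so $Z(\g)\subseteq (R')^K=R'$. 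In short, the paper trades your proposed combinatorial rewriting for a commutative-algebra argument (Jacobian regularity plus a degree count), and thereby never has to produce an explicit expression for an arbitrary central element. A minor point: in the paper's convention $y_0\in Z_p(\g)$ already (since $y_0$ is central in $\g$), so your detour $y_0=y_0^p\cdot y_0^{-(p-1)}$ is unnecessary.
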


The first author was financially 
supported by a PhD scholarship awarded by CNPq (Brazil). The second author acknowledges the financial support of  the CNPq projects 
\textit{Produtividade em Pesquisa} (project no.: 308212/2019-3)  
and \textit{Universal} (project no.: 421624/2018-3 and 402934/2021-0) and the FAPEMIG project 
\textit{Universal} (project no.: APQ-00818-23). We thank Lucas Calixto for several useful comments on the earlier drafts, to Dmitry Shcheglov and to Csaba Noszály for help with Theorem~\ref{th:th3}. We appreciate the useful observations of the referee.

\section{The standard filiform Lie algebra}
\label{sec:stfil}

The adjoint action of the Lie algebra $\g=\g(n+2)$ on itself can 
be extended to the polynomial ring $\F[\g]=\F[x,y_0,y_1,\ldots,y_n]$, and we define the invariant ring $\F[\g]^\g$ as  
\[
    \F[\g]^\g=\{f\in\F[\g]\mid u(f)=0\mbox{ for all }u\in\g\}.
\]
The commutative algebra $\F[\g]^\g$ is referred to as the {\em algebra of polynomial invariants} of~$\g$. 

Let $\F[\bfy]$ denote the polynomial algebra $\F[y_0,y_1\ldots]$ in infinitely 
many variables and let $\F[\bfy_n]=\F[y_0,\ldots,y_n]$. 
Then the one-dimensional Lie algebra $\n=\left<x\right>$  acts on 
the vector space $V=\left<y_0,y_1,\ldots\right>$ by mapping $y_i$ to $y_{i-1}$ for 
$i\geq 1$ and $y_0$ to $0$. Set $V_n=\left<y_0,\ldots,y_n\right>$ and note 
that $V_n$ is an $\n$-submodule of $V$.  
The Lie algebra $\g(n+2)$ can also be viewed as the semidirect product $V_n\rtimes\n$.

The $\n$-action on $V$ can be extended to $\F[\bfy]$ by 
the Leibniz rule and $\F[\bfy_n]$ is $\n$-invariant for all $n\geq 1$. 
We denote by 
$x(f)$ the image of $f\in\F[\bfy]$ under this action.
Simple computation shows that
\begin{equation}\label{eq:action}
    x(f)=\sum_{i\geq 0}y_i\frac{\partial f}{\partial y_{i+1}}.
\end{equation}
The algebra of $x$-invariants in $\F[\bfy]$ is denoted by $\F[\bfy]^x$; more precisely,
\[
    \F[\bfy]^x=\{f\in\F[\bfy]\mid x(f)=0\}.
\]
The operator $f\mapsto x(f)$ on $\F[\bfy]$ is often referred to as the \emph{down operator}; 
see~\cite{freud}.
The algebra of $x$-invariants can also be described as the 
set of polynomial solutions of the partial differential equation
\begin{equation}\label{eq:pde}
    \sum_{i\geq 0}y_i\frac{\partial f}{\partial y_{i+1}}=0.
\end{equation}

For a Lie algebra $\g$, let $Z(\g)$ denote the center of the universal enveloping algebra $U(\g)$.
Since $y_0,\ldots,y_n$ commute in $\g(n+2)$, the Poincaré--Birkhoff--Witt Theorem gives 
an embedding of the polynomial algebra 
 $\F[\bfy_n]$ into $U(\g(n+2))$. Further, if 
$\F$ has prime characteristic $p$ and $p\geq n+1$, then 
$x^p\in Z(\g(n+2))$ and $\F[x^p,y_0,\ldots,y_n]$ is a subalgebra of $U(\g(n+2))$.   
Note that the action of $x$ on $\F[\bfy_n]$ 
is the same whether it is considered inside $U(\g(n+2))$ or it is considered as a stand-alone 
polynomial algebra under the action defined in~\eqref{eq:action}.

\begin{lemma}\label{lem:isom}
    Let $\g=\g(n+2)$ be a standard filiform Lie algebra. If $\F$ has characteristic zero, then 
\[
    Z(\g)\subseteq\F[\bfy_n]
\]
and 
\[ 
    Z(\g)=\F[\g]^\g=\F[\bfy_n]^x.
\]
If the characteristic of $\F$ is a prime $p$, then
\[
    Z(\g)\subseteq \F[x^p,y_0,\ldots,y_n]
\] 
and, if in addition $p\geq n+1$, then
\[
    Z(\g)=\F[\g]^\g=\F[x^p,y_0,\ldots,y_n]^x.
\]
\end{lemma}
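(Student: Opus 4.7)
The plan is to exploit the PBW basis $\{y^{\vec b}x^a\}$ of $U(\g)$: each element admits a unique expansion $z=\sum_a f_a(y)x^a$ with $f_a\in\F[\bfy_n]$, and one can read off centrality from a single well-chosen commutator. The key computation is $[x^a,y_1]=a\,y_0\,x^{a-1}$, which follows by induction from $[x,y_1]=y_0$ together with the centrality of $y_0$. Consequently for any $z\in Z(\g)$,
\[
    [z,y_1]=y_0\sum_a a f_a(y)\,x^{a-1}=0,
\]
and separating PBW monomials with distinct $x$-powers gives $y_0\cdot a f_a(y)=0$, hence $a f_a(y)=0$ in $\F[\bfy_n]$ for every $a$. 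In characteristic zero this forces $f_a=0$ for $a\geq 1$, so $z\in\F[\bfy_n]$; in characteristic $p$ it forces $f_a=0$ unless $p\mid a$, and writing $x^{pq}=(x^p)^q$ then places $z$ in the subalgebra of $U(\g)$ generated by $y_0,\ldots,y_n,x^p$.

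Under the assumption $p\geq n+1$, I will verify that $x^p$ is central. The analogous PBW expansion is $[x^p,y_i]=\sum_{j\geq 0}\binom{p}{j+1}y_{i-1-j}x^{p-1-j}$; modulo $p$ only the $j=p-1$ term survives, giving $[x^p,y_i]=y_{i-p}$, which vanishes for $0\leq i\leq n$ precisely under our hypothesis. Thus $\F[x^p,y_0,\ldots,y_n]$ is a commutative polynomial subalgebra of $U(\g)$, on which $\ad(x)$ acts as the derivation killing $x^p$ and sending $y_i\mapsto y_{i-1}$ --- precisely the operator of~\eqref{eq:action}. Combining with the containments above, an element of the relevant commutative subalgebra is central iff $[z,x]=0$ iff $x(z)=0$, giving $Z(\g)=\F[\bfy_n]^x$ and $Z(\g)=\F[x^p,y_0,\ldots,y_n]^x$ in the two cases.

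For the identification with $\F[\g]^\g$, I will view $\F[\g]=S(\g)$ with the adjoint $\g$-action extended by derivations. In this picture $\ad(y_1)$ acts as the derivation $-y_0\,\partial/\partial x$, so $y_1\cdot f=0$ forces $\partial f/\partial x=0$; this places $f$ in $\F[\bfy_n]$ in characteristic zero and in $\F[x^p,y_0,\ldots,y_n]$ in characteristic $p$. The remaining $\ad(y_j)$ then annihilate $f$ automatically, and $\ad(x)$ restricts to the down operator of~\eqref{eq:action}, yielding $\F[\g]^\g=\F[\bfy_n]^x$ (resp.\ $\F[x^p,y_0,\ldots,y_n]^x$).

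The main obstacle is choosing a sufficiently clean bracket in the very first step: for $i>1$, the commutator $[z,y_i]$ produces correction terms of the form $\binom{a}{j+1}y_{i-1-j}x^{a-1-j}$ with $j\geq 1$ that couple different $x$-powers and would require an iterative argument. The choice $i=1$ is special because every such correction involves $y_{-j}=0$ for $j\geq 1$, collapsing the sum to the single clean term $a\,y_0\,x^{a-1}$ and producing the coefficient equation $a f_a=0$ in one stroke.
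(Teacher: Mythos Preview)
Your argument is correct, and it is genuinely different from the paper's. The paper brackets with $y_n$: it computes $[x^k,y_n]=kx^{k-1}y_{n-1}+h_k$ with $h_k$ of lower $x$-degree, and then runs a leading-monomial argument in the lexicographic order to conclude that the top $x$-exponent of a central element must be divisible by $p$ (or zero). You instead bracket with $y_1$, exploiting that $[x,y_1]=y_0$ lands in the center, so that $[x^a,y_1]=a\,y_0\,x^{a-1}$ holds \emph{exactly}, with no correction terms at all. This lets you read off $a f_a=0$ for every $a$ simultaneously from a single PBW comparison, rather than peeling off leading terms one by one. Your route is shorter and avoids introducing a monomial order; the paper's route via $y_n$ is more in the spirit of the filtration arguments used later in the paper, but pays for it with the extra bookkeeping of the $h_k$ terms. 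Your treatment of $\F[\g]^\g$ via $\ad(y_1)=-y_0\,\partial/\partial x$ on $S(\g)$ is likewise cleaner than the paper's ``similarly verified'' remark, and your verification that $x^p$ is central via $[x^p,y_i]=\sum_{k\geq 1}\binom{p}{k}y_{i-k}x^{p-k}\equiv y_{i-p}$ is exactly what is needed.
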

\begin{proof}
Easy induction shows, in $U(\g)$, that  
\[
    [x^k,y_n]=x^ky_n-y_nx^k=kx^{k-1}y_{n-1}+h_k
\]
where $h_k$ is a linear combination of monomials whose degree in $x$ is smaller than $k-1$. 
Suppose that $z\in Z(\g)$ and let $m=x^\alpha y_0^{\alpha_0}\cdots y_n^{\alpha_n}$ be the 
leading term of $z$ in the lexicographic monomial order. Then the last displayed equation 
implies that the leading term of $[z,y_n]$ is 
\[
    \alpha x^{\alpha-1}y_0^{\alpha_0}\cdots y_{n-1}^{\alpha_{n-1}+1}y_n^{\alpha_n}.
\]
But, as $z$ is central, $[z,y_n]=0$, and so either 
$\alpha=0$ or the characteristic of $\F$ is $p$ and $p\mid \alpha$. 
Thus, the inclusions for $Z(\g)$ in $\F[\bfy_n]$ and, in the case of characteristic $p$, 
in $\F[x^p,y_0,\ldots,y_n]$ is verified. One can similarly verify the same containment 
$\F[\g]^\g\subseteq \F[\bfy_n]$ and $\F[\g]^\g\subseteq \F[x^p,y_0,\ldots,y_n]$. 
For the equations regarding $Z(\g)$, note that 
if $\F$ has prime characteristic $p$ and $p\geq n+1$, then $x^p\in Z(\g)$. 
The rest of the lemma follows from the observation above that $\F[\bfy_n]$ and 
$\F[x^p,y_0,\ldots,y_n]$ are embedded into $U(\g)$ under the Poincaré--Birkhoff--Witt Theorem and 
the $x$-action on these subalgebras is the same as in~\eqref{eq:action}.
\end{proof}

From now on, we set $Z_n=Z(\g_{n+2})$ considered as a subalgebra of $\F[\bfy_n]$. Note that the 
$x$-action on $\F[\bfy_n]$ preserves the grading, and hence  $Z_n$ 
is a graded subalgebra.

\begin{example}\label{ex:z1}
    Suppose that the characteristic of $\F$ is zero.
    The ring $Z_1$ is the center of $U(\g(3))$ which coincides with the invariant algebra of 
    $\g(3)$. Note that $\g(3)$ is the three-dimensional Heisenberg Lie algebra. By 
    Lemma~\ref{lem:isom}, 
    \[
        Z_1=\left\{f\in\F[y_0,y_1]\mid y_0\frac{\partial f}{\partial y_1}=0\right\}=
        \left\{f\in\F[y_0,y_1]\mid \frac{\partial f}{\partial y_1}=0\right\}=
        \F[y_0].
    \]
\end{example}

\section{The elements of $\F[\bfy_n]^x$}\label{sec:gens}

%if $V_n=\left<y_0,\ldots,y_n\right>_\F$ is $d$-invariant and there exists a basis of $V_n$ for which the matrix of $d$ consists of one upper triangular Jordan block with eigenvalue $0$. 
%See~\cite{bed1,bed2,bed3,bed4,bed5,bed6} for more on Weitzenb\"ok derivations.
In this section, we prove some general facts concerning the elements of  $\F[\bfy_n]^x$ and of the fraction 
field $\fracfield{\F[\bfy_n]^x}$. We will use these facts to determine explicit generators 
in Section~\ref{sec:gens}. In this section $\F$ is a field of characteristic zero.

\begin{lemma}\label{lem:gi}
    Suppose that $z\in\F[\bfy_n]$ and write 
    \[
        z=\sum_{i=0}^k y_n^ig_i
    \]
    where $g_i\in\F[\bfy_{n-1}]$. Then $z\in \F[\bfy_n]^x$ if and only if  
    \[
        x(g_k)=0\quad\mbox{and}\quad x(g_i)=-(i+1)y_{n-1}g_{i+1}\quad\mbox{for all}\quad i\leq k-1.
    \]
\end{lemma}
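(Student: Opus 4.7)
The plan is to verify the claim by a direct calculation: apply the derivation $x$ to the expansion $z=\sum_{i=0}^k y_n^i g_i$, regroup the result as a polynomial in $y_n$ with coefficients in $\F[\bfy_{n-1}]$, and then invoke the fact that $\F[\bfy_n]=\F[\bfy_{n-1}][y_n]$ to conclude that $x(z)=0$ holds if and only if every coefficient vanishes.

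First I would record the two properties of $x$ used throughout. From~\eqref{eq:action}, $x$ is a derivation on $\F[\bfy_n]$, it sends $y_n\mapsto y_{n-1}$, and it maps $\F[\bfy_{n-1}]$ into itself (since $x(y_j)=y_{j-1}$ for $1\leq j\leq n-1$ and $x(y_0)=0$, none of these involve $y_n$). In particular, each $x(g_i)$ again lies in $\F[\bfy_{n-1}]$ for every $i$.

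Next I would expand, using the Leibniz rule,
\[
    x(z)=\sum_{i=0}^k x(y_n^i)\,g_i+\sum_{i=0}^k y_n^i\,x(g_i)
    =\sum_{i=1}^k i\,y_n^{i-1}y_{n-1}g_i+\sum_{i=0}^k y_n^i\,x(g_i).
\]
Reindexing the first sum via $j=i-1$ and combining like powers of $y_n$, this rewrites as
\[
    x(z)=\sum_{j=0}^{k-1}\bigl(x(g_j)+(j+1)y_{n-1}g_{j+1}\bigr)y_n^j+x(g_k)\,y_n^k.
\]
Each bracketed coefficient, as well as $x(g_k)$, is a member of $\F[\bfy_{n-1}]$, so the right-hand side is the expansion of $x(z)$ as a polynomial in $y_n$ over $\F[\bfy_{n-1}]$.

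Finally, since $\F[\bfy_n]=\F[\bfy_{n-1}][y_n]$ is a polynomial ring in $y_n$, the vanishing of $x(z)$ is equivalent to the vanishing of every coefficient of $y_n^j$, which gives precisely the conditions $x(g_k)=0$ and $x(g_i)=-(i+1)y_{n-1}g_{i+1}$ for $0\leq i\leq k-1$, as claimed. There is no serious obstacle here; the only care needed is to ensure that the coefficients stay in $\F[\bfy_{n-1}]$ so that the comparison of coefficients of $y_n$ is justified, and this is guaranteed by the observation that $x$ preserves $\F[\bfy_{n-1}]$.
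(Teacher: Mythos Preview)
Your proof is correct and follows essentially the same approach as the paper: both apply the derivation $x$ term by term, regroup by powers of $y_n$, observe that the coefficients lie in $\F[\bfy_{n-1}]$, and then compare coefficients. Your version is slightly more explicit about why the comparison of coefficients is valid (using $\F[\bfy_n]=\F[\bfy_{n-1}][y_n]$), but the argument is the same.
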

\begin{proof}
    First, we compute 
\begin{align*}
    x(z)&=x\left(\sum_{i=0}^k y_n^ig_i\right)=\sum_{i=1}^k iy_n^{i-1}y_{n-1}g_i+\sum_{i=0}^k y_n^i\cdot x(g_i)\\&=
    y_n^k\cdot x(g_k)+\sum_{i=0}^{k-1}y_n^i\left((i+1)y_{n-1}g_{i+1}+x(g_i)\right).
\end{align*}
Since $g_i,x(g_i)\in\F[\bfy_{n-1}]$ for all $i\in\{0,\ldots,k\}$, we obtain that the coefficient of 
$y_n^k$ in $x(z)$ is $x(g_k)$ and, for $i\in\{0,\ldots,k-1\}$, the coefficient of $y_n^i$ is $(i+1)y_{n-1}g_{i+1}+x(g_i)$. 
Thus, $x(z)=0$ if and only if $x(g_k)=0$ and $(i+1)y_{n-1}g_{i+1}+x(g_i)=0$ for $i\in\{0,\ldots,k-1\}$. 
\end{proof}

\begin{corollary}\label{cor:gi}
    If $z\in\F[\bfy_n]^x$ is written as in the previous lemma, then
    $z$ is determined by the term $g_0\in\F[\bfy_{n-1}]$.  
%    \[
%        g_{i}=(-1)^{i}\frac{x^{i}(g_0)}{i!y_{n-1}^{i}}\quad\mbox{for}\quad i\in\{1,\ldots,k\}.
%    \]
%     In particular, $x^i(g_0)$ is divisible by $y_{n-1}^{i}$ and $z$ is determined by $g_0$.  
%     {\color{red}[This is not right!]}
\end{corollary}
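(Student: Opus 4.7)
The plan is to extract the recursion hidden in Lemma~\ref{lem:gi} and observe that it determines each $g_{i+1}$ uniquely from $g_i$, which by induction shows that the whole sequence $(g_0, g_1, \ldots, g_k)$ is determined by $g_0$.

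More precisely, by Lemma~\ref{lem:gi}, whenever $z = \sum_{i=0}^k y_n^i g_i \in \F[\mathbf{y}_n]^x$ with $g_i \in \F[\mathbf{y}_{n-1}]$, we have the identity
\[
    (i+1) y_{n-1} g_{i+1} = -x(g_i) \quad \text{for all } 0 \leq i \leq k-1.
\]
Since $\F$ has characteristic zero, the integer $i+1$ is invertible in $\F$, and since $\F[\mathbf{y}_{n-1}]$ is an integral domain with $y_{n-1} \neq 0$, multiplication by $(i+1) y_{n-1}$ is an injective endomorphism of $\F[\mathbf{y}_{n-1}]$. Consequently, the equation above has at most one solution $g_{i+1}$ for a given $g_i$.

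Now suppose $z, z' \in \F[\mathbf{y}_n]^x$ share the same $g_0$; write $z = \sum_{i=0}^k y_n^i g_i$ and $z' = \sum_{i=0}^{k'} y_n^i g_i'$ with $g_0 = g_0'$. Proceeding by induction on $i$, assume $g_i = g_i'$. Then the recursion gives $(i+1) y_{n-1} g_{i+1} = -x(g_i) = -x(g_i') = (i+1) y_{n-1} g_{i+1}'$, which by injectivity forces $g_{i+1} = g_{i+1}'$ (where either side is understood to be zero if the corresponding index exceeds $k$ or $k'$). Thus $g_i = g_i'$ for every $i$, and $z = z'$.

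There is no real obstacle here beyond noting the two cancellation facts: that $i+1$ is a unit (using $\ch \F = 0$) and that $y_{n-1}$ is a non-zero-divisor in $\F[\mathbf{y}_{n-1}]$. Both would fail in positive characteristic once $i+1$ becomes divisible by $p$, which is one of the reasons the prime characteristic case must be handled separately later in Section~\ref{sec:prime}.
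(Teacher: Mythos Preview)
Your argument is correct and is precisely the intended one: the paper states the result as an immediate corollary without proof, and the recursion $g_{i+1} = -x(g_i)/((i+1)y_{n-1})$ you extract from Lemma~\ref{lem:gi}, together with cancellation of $(i+1)y_{n-1}$ in the integral domain $\F[\bfy_{n-1}]$ over a field of characteristic zero, is exactly what makes this immediate.
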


In the following lemma, we consider monomials ordered lexicographically considering
 the exponents of $y_n,\ldots, y_1,y_0$ in this order.

\begin{lemma}
    Suppose that $z\in\F[\bfy_n]^x$ and write 
    \[
        z=\sum_{i=0}^k y_n^ig_i
    \]
    where $g_i\in\F[\bfy_{n-1}]$ for all $i\in\{0,\ldots,k\}$. Then 
    \begin{enumerate}
        \item $g_k\in\F[\bfy_{n-1}]^x$;
        \item the  leading monomial of $z$ does not contain $y_1$. 
    \end{enumerate}
\end{lemma}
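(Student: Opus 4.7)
The plan is to handle the two parts separately, with part (1) essentially a direct consequence of the preceding lemma and part (2) following by induction on $n$.

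For part (1), I would simply observe that Lemma~3.3 (the one stated just before, giving the coefficient conditions) tells us directly that $x(g_k)=0$. Since $g_k\in\F[\bfy_{n-1}]$ by hypothesis and $x$ acts on $\F[\bfy_{n-1}]$ by the same formula~\eqref{eq:action} (only the variables $y_0,\ldots,y_{n-1}$ appear in the relevant terms), this means exactly that $g_k\in\F[\bfy_{n-1}]^x$.

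For part (2), I would argue by induction on $n$. In the base case $n=1$, the previous lemma forces $(i+1)y_0 g_{i+1}+x(g_i)=0$ for $i\leq k-1$; since $g_i\in\F[y_0]$ we have $x(g_i)=0$, so $g_{i+1}=0$ for all $i\leq k-1$. Thus $z=g_0\in\F[y_0]$, and its leading monomial is a power of $y_0$, trivially containing no $y_1$. (Equivalently one can cite Example~3.2.) For the inductive step with $n\geq 2$, the lex order declared above orders first by the exponent of $y_n$, so the leading monomial of $z=\sum_{i=0}^k y_n^ig_i$ equals $y_n^k\cdot M$, where $M$ is the leading monomial of $g_k$ in the induced lex order on $\F[\bfy_{n-1}]$. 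By part~(1), $g_k\in\F[\bfy_{n-1}]^x$, and by the induction hypothesis the leading monomial $M$ of $g_k$ does not contain $y_1$. Therefore neither does $y_n^kM$.

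There is no real obstacle; the only small subtlety is the degenerate case $k=0$, where $z=g_0\in\F[\bfy_{n-1}]^x$ and one simply applies the inductive hypothesis to $z$ itself, and the requirement to confirm that the lex order on $\F[\bfy_n]$ restricts to the corresponding lex order on $\F[\bfy_{n-1}]$ so that ``leading monomial of $g_k$'' means the same thing in both settings.
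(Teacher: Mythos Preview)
Your proposal is correct and follows essentially the same route as the paper: part~(1) is read off from the coefficient condition $x(g_k)=0$ in the preceding lemma, and part~(2) is proved by induction on $n$, with the base case $n=1$ reducing to $Z_1=\F[y_0]$ and the inductive step identifying the leading monomial of $z$ as $y_n^k$ times the leading monomial of $g_k\in\F[\bfy_{n-1}]^x$. Your explicit derivation of the base case from the recursion (rather than simply citing Example~\ref{ex:z1}) and your remarks on the $k=0$ case and on the compatibility of the lex orders are welcome clarifications, but the underlying argument is the same as the paper's.
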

\begin{proof}
    Statement~(1) follows from Lemma~\ref{lem:gi}. Let us prove (2) by induction on $n$. If $n=1$, then 
    the statement is clear, since $Z_1=\F[\bfy_1]^x=\F[y_0,y_1]^x=\F[y_0]$ (see Example~\ref{ex:z1}). Suppose that the statement 
    holds for $n-1\geq 1$ and consider $z\in\F[\bfy_n]$. If $z$ does not contain the variable $y_n$, 
    then $z\in\F[\bfy_{n-1}]^x$ and we are done by the induction hypothesis. Otherwise, write $z=\sum_{i\leq k}y_n^ig_i$. 
    Then the  leading monomial of $z$ is $y_n^km$ where $m$ is the 
    leading monomial of $g_k$. By Statement~(1), $g_k\in\F[\bfy_{n-1}]^x$, and so the induction hypothesis 
    implies that $m$ does not contain $y_1$. Thus,
    $y_n^km$ does not contain $y_1$ also.  
\end{proof}

\begin{theorem}\label{th:Zgen}
    Suppose that $Z\subseteq\F[\bfy_n]$ is a graded subalgebra (with respect to the grading by degree) such that 
    if $z\in Z$ then the  leading monomial of $z$ is not divisible by $y_1$.  
    Suppose that $z_1,\ldots,z_n\in Z$ are homogeneous elements such that $z_1=y_0$ and, for each $i\in\{2,\ldots,n\}$,
\[
    z_i=y_iy_0^{k_i}+z'_i\mbox{ with } k_i\geq 0\mbox{ and } z_i'\in\F[\bfy_{i-1}].
\]
Then the following are valid.
\begin{enumerate}
    \item $Z\subseteq \F[y_0^{-1},z_1,\ldots,z_n]$.
    \item The fraction field of $Z$ is generated by $z_1,\ldots,z_n$.
    \item The elements $z_1,\ldots,z_n$ are algebraically independent over $\F$.
    \item $\dim Z=n$ (Krull dimension).
\end{enumerate} 
\end{theorem}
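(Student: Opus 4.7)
The plan is to establish (1) by a carefully chosen induction and then extract (2)--(4) as consequences. Since $Z$ is a graded subalgebra, every $z\in Z$ decomposes into homogeneous components that remain in $Z$, so it suffices to prove (1) for nonzero homogeneous $z$. Fix such a $z$ of degree $d$ and let its leading monomial (in lex with $y_n>y_{n-1}>\cdots>y_0$) be $m=y_n^{a_n}\cdots y_2^{a_2}y_0^{a_0}$; by the standing hypothesis on $Z$ the exponent of $y_1$ is $0$. Write $c$ for the leading coefficient of $z$ and set $N=\sum_{i=2}^n k_ia_i$. Then the element
\[
    w = c\,z_1^{a_0}z_2^{a_2}z_3^{a_3}\cdots z_n^{a_n}\in Z
\]
is homogeneous of degree $d+N$ and has the same leading monomial and leading coefficient as $y_0^Nz$. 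Hence $\tilde z:=y_0^Nz-w\in Z$ is either zero or homogeneous of degree $d+N$ with strictly lex-smaller leading monomial than $y_0^Nz$, and (by hypothesis on $Z$) still free of $y_1$.

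The crucial observation is that homogeneity determines the $y_0$-exponent of the leading monomial of $\tilde z$ from the remaining exponents. If the ``$A$-profile'' $(b_n,\ldots,b_2)$ of $\tilde z$ equalled $(a_n,\ldots,a_2)$, the $y_0$-exponent would be forced to equal $a_0+N$, reproducing the leading monomial of $y_0^Nz$ and contradicting strict decrease. Therefore either $\tilde z=0$, or the $A$-profile of $\tilde z$ is strictly lex-smaller than that of $z$. Lexicographic order on $\Z_{\geq 0}^{n-1}$ is a well-order, so strong induction on the $A$-profile delivers $\tilde z\in\F[y_0^{-1},z_1,\ldots,z_n]$, and therefore
\[
    z = y_0^{-N}(w+\tilde z)\in\F[y_0^{-1},z_1,\ldots,z_n].
\]
The base case (zero $A$-profile) forces $z$ to be a scalar multiple of $y_0^{a_0}=z_1^{a_0}$ by homogeneity.

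Parts (2)--(4) then fall out quickly. For (2) the sandwich $\F[z_1,\ldots,z_n]\subseteq Z\subseteq\F[y_0^{-1},z_1,\ldots,z_n]\subseteq\fracfield{\F[z_1,\ldots,z_n]}$ yields $\fracfield{Z}=\F(z_1,\ldots,z_n)$. For (3) the map
\[
    (b_1,\ldots,b_n)\longmapsto y_n^{b_n}\cdots y_2^{b_2}y_0^{b_1+\sum_{i\geq 2}k_ib_i},
\]
which sends $(b_1,\ldots,b_n)$ to the leading monomial of $z_1^{b_1}\cdots z_n^{b_n}$ in $\F[\bfy_n]$, is injective on $\Z_{\geq 0}^n$ (recover $b_n,\ldots,b_2$ from the $y_i$-exponents and then $b_1$ from the $y_0$-exponent), so distinct monomials in any supposed algebraic relation $P(z_1,\ldots,z_n)=0$ would produce distinct leading terms that cannot cancel, forcing $P=0$. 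For (4), since $z_1=y_0$, part~(1) gives $Z[z_1^{-1}]=\F[z_1,\ldots,z_n][z_1^{-1}]$, whose Krull dimension is $n$, witnessed by the chain $(0)\subsetneq(z_2)\subsetneq\cdots\subsetneq(z_2,\ldots,z_n)$ whose members avoid $z_1$. Localization cannot increase Krull dimension, so $\dim Z\geq\dim Z[z_1^{-1}]=n$. Conversely, the general bound $\dim D\leq\mathrm{tr.deg}_\F\fracfield{D}$ for any $\F$-domain $D$ (witness any prime chain by a finitely generated $\F$-subalgebra), applied via (2), yields $\dim Z\leq n$.

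The main obstacle is engineering the induction in~(1): a naive induction on leading monomials of elements of $Z$ does not terminate, because the reduction $z\mapsto\tilde z$ can push the $y_0$-exponent well beyond $a_0$. Exploiting homogeneity to rule out the ``same-profile-with-raised-$y_0$-exponent'' case, and then recognizing that the $A$-profile alone---lex-ordered on $\Z_{\geq 0}^{n-1}$---supplies a well-founded parameter, is the essential technical point; everything else is bookkeeping.
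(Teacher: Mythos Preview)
Your argument is correct and follows the same leading-term reduction as the paper. In fact your treatment of~(1) is more careful: the paper forms $z'=z-z_2^{\alpha_2}\cdots z_n^{\alpha_n}y_0^{\alpha_0-\sum_i\alpha_ik_i}$ and asserts $z'\in Z$, but when $\alpha_0<\sum_i\alpha_ik_i$ (as already happens for $\zeta_4\in Z_3$, where $\alpha_0=2$ and $\alpha_3k_3=4$) this element involves a negative power of $y_0$ and is not in $Z$, so the paper's induction as written is not well-founded; your device of first multiplying by $y_0^N$ and then inducting on the $A$-profile, using homogeneity to force that profile to strictly drop, cleanly closes this gap. One small slip in~(4): the chain $(0)\subsetneq(z_2)\subsetneq\cdots\subsetneq(z_2,\ldots,z_n)$ has length $n-1$, not $n$; either append the maximal ideal $(z_1-1,z_2,\ldots,z_n)$ at the top, or simply note that $Z[z_1^{-1}]=\F[z_1^{\pm1},z_2,\ldots,z_n]$ is a finitely generated $\F$-domain with fraction field of transcendence degree $n$, hence has Krull dimension exactly $n$.
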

\begin{proof}
(1) Suppose that $z\in Z$; we need to show that $z$ can be written as a polynomial expression 
in $y_0^{-1}$ and in the given $z_i$. 
We show this by induction on the leading monomial of $z$. The base case of the induction is when 
$z=1$ and the statement in this case is obvious.
Suppose now that $z$ is a homogeneous element of $Z$ and its leading monomial 
is 
\[
    m=y_n^{\alpha_n}\cdots y_2^{\alpha_2}y_0^{\alpha_0}
\]
with $m\neq 1$. Note, for $i\geq 2$, that the leading monomial of $z_i$ is $y_iy_0^{k_i}$. Consider the element 
\[
  z'=z-z_2^{\alpha_2}\cdots z_n^{\alpha_n}y_0^{\alpha_0-\alpha_2k_2-\cdots-\alpha_kk_n}  
\]
Then the leading monomial of $z'$ is smaller than $m$ in the monomial ordering and $z'\in Z$. By the induction 
hypothesis, $z'\in\F[y_0^{-1},z_1,\ldots,z_n]$ which implies that $z\in \F[y_0^{-1},z_1,\ldots,z_n]$.

(2) follows from (1). To prove~(3), take a polynomial $g\in\F[t_1,\ldots,t_n]$ such that  $g(z_1,\ldots,z_n)=0$. 
If $g\neq 0$, then, supposing that the leading term of $g$ is $t_1^{\alpha_1}\cdots t_n^{\alpha_n}$, 
the leading term of $g(z_1,\ldots,z_n)$ is $y_n^{\alpha_n}\ldots y_2^{\alpha_2}y_0^k$ 
for some $k\geq 0$, which is nonzero. Thus,  $g=0$ must hold and $z_1,\ldots,z_n$ are algebraically independent.

(4) follows from (3).
\end{proof}

\section{Some facts concerning the representations of $\slg$}\label{sec:sl2}

Suppose in this section that $\F$ is a field of characteristic zero. It is well known that the Lie algebra $\slg=\left<e,f,h\right>$, where $h=[e,f]$,
acts on the polynomial ring $\F[x_1,x_2]$ by extending the action
\begin{align*}
    e&: x_1\mapsto 0,\quad x_2\mapsto x_1,\\
    f&: x_1\mapsto x_2,\quad x_2\mapsto 0,\\
    h&: x_1\mapsto x_1,\quad x_2\mapsto -x_2\\
\end{align*}
using the Leibniz rule.  This action on $\F[x_1,x_2]$ can be described, for $g\in\F[x_1,x_2]$, as 
\[
    e(g)=x_1\frac{\partial g}{\partial x_2}\qquad\mbox{and}\qquad f(g)=x_2\frac{\partial g}{\partial x_1}.
\]  
Let $U_n$ denote the space of homogeneous polynomials in $\F[x_1,x_2]$ of degree $n$ with basis 
$x_1^n,x_1^{n-1}x_2,\ldots,x_1x_2^{n-1},x_2^n$. Then the action of $e$ and $f$ can be described as 
\[
    e(x_1^ix_2^{n-i})=(n-i)x_1^{i+1}x_2^{n-i-1}\quad\mbox{and}\quad f(x_1^ix_2^{n-i})=ix_1^{i-1}x_2^{n-i+1}.
\]
The basis $x_1^n,x_1^{n-1}x_2,\ldots,x_2^n$ consists of eigenvectors for the operator $h$ with eigenvalues 
$n,n-2,\ldots,-n+2,-n$, respectively. The eigenvectors of $h$ in a representation of $\slg$ are often referred 
to as \emph{weight vectors} and the corresponding eigenvalues as \emph{weights}.
The following result is well-known.

\begin{theorem}\label{th:slmods}
    For $n\geq 0$, the space $U_n$ is irreducible considered as an $\slg$-module. Furthermore,  each finite-dimensional  irreducible $\slg$-module  is isomorphic to $U_n$ for some $n\geq 0$.   
\end{theorem}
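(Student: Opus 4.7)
The plan is the classical weight-theoretic classification of finite-dimensional irreducible $\slg$-modules over a field of characteristic zero.

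For the irreducibility of $U_n$, I would use that the basis vectors $x_1^ix_2^{n-i}$ are $h$-eigenvectors with pairwise distinct eigenvalues $n-2i$ (distinct because $\Char\F=0$). Any nonzero $\slg$-submodule $W\subseteq U_n$ is $h$-stable, so by the distinctness of weights $W$ is the direct sum of its intersections with the one-dimensional weight spaces; hence $W$ is linearly spanned by a subset of the $x_1^ix_2^{n-i}$. The explicit formulas for the action of $e$ and $f$ give $e(x_1^ix_2^{n-i})=(n-i)x_1^{i+1}x_2^{n-i-1}$ and $f(x_1^ix_2^{n-i})=ix_1^{i-1}x_2^{n-i+1}$, and these scalars are nonzero whenever the target monomial is nonzero (again since $\Char\F=0$). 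Repeated application of $e$ and $f$ therefore moves any weight vector of $W$ to every basis vector $x_1^jx_2^{n-j}$, giving $W=U_n$.

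For the classification, let $V$ be a finite-dimensional irreducible $\slg$-module. After extending scalars to an algebraic closure $\bar\F$ if necessary, $h$ admits an eigenvector in $V\otimes_\F\bar\F$; since $[h,e]=2e$ raises the $h$-weight by $2$ and $V$ is finite-dimensional, iterating $e$ eventually yields a nonzero vector $v$ with $ev=0$ and $hv=\lambda v$. Set $v_k=f^kv$. Easy inductions with $[h,f]=-2f$ and $[e,f]=h$ give
\[
    hv_k=(\lambda-2k)v_k\qquad\mbox{and}\qquad ev_k=k(\lambda-k+1)v_{k-1}.
\]
Let $m$ be the largest index with $v_m\neq 0$. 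Applying $ev_{m+1}=0$ forces $(m+1)(\lambda-m)v_m=0$, so $\lambda=m\in\Z_{\geq 0}$. The span $\gen{v_0,\ldots,v_m}$ is $\slg$-stable and hence equals $V\otimes_\F\bar\F$ by irreducibility; the map $v_k\mapsto k!\binom{m}{k}x_1^{m-k}x_2^k$ is then an $\slg$-isomorphism onto $U_m$.

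The only delicate point is the descent from $\bar\F$ back to $\F$ when the field is not algebraically closed. Each $U_m$ is absolutely irreducible (the weight argument above applies verbatim over any extension), and its $\slg$-endomorphism algebra equals $\F$ by Schur's lemma, so any isomorphism $V\otimes_\F\bar\F\cong U_m\otimes_\F\bar\F$ descends to an isomorphism $V\cong U_m$ over $\F$. This Galois-descent step, though standard, is the main place where I expect one needs to be careful.
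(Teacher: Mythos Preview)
The paper does not actually prove this theorem; it is simply recorded as well known (it is the standard classification of finite-dimensional irreducible $\slg$-modules in characteristic zero). Your argument is the classical highest-weight proof and is essentially correct.

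You have rightly flagged the passage to and from $\bar\F$ as the delicate part, but the actual gap sits one line earlier than where you place it. When you write that the span $\gen{v_0,\ldots,v_m}$ ``equals $V\otimes_\F\bar\F$ by irreducibility'', you are tacitly using irreducibility of $V\otimes_\F\bar\F$ over $\bar\F$, which has not been established: you only know that $V$ is irreducible over $\F$, and a proper $\bar\F$-submodule of $V\otimes_\F\bar\F$ need not come from an $\F$-submodule of $V$. Your descent argument via $\End_{\slg}(U_m)=\F$ would indeed work \emph{given} an isomorphism $V\otimes_\F\bar\F\cong U_m\otimes_\F\bar\F$, but that isomorphism is exactly what the unjustified step was meant to supply. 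The cleanest repair avoids descent altogether. Your own computation, applied to an arbitrary $h$-eigenvector and iterating $e$ to reach a highest-weight vector, shows that every eigenvalue of $h$ on $V\otimes_\F\bar\F$ is an integer and hence lies in $\F$; therefore $h$ already has an eigenvector in $V$, and you can run the entire highest-weight construction inside $V$ over $\F$. Irreducibility of $V$ over $\F$ then gives $\gen{v_0,\ldots,v_m}=V$ directly, and the explicit isomorphism with $U_m$ is defined over $\F$ from the start.
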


Note that the $\slg$-module $U_n$ can also be recognized by the weights 
(that is, the $h$-eigenvalues)
which are $n,n-2,\ldots,-n+2,-n$. 
The reason why we are interested in the $\slg$-modules $U_n$ is because the action of $e$ on $U_n$ is 
isomorphic to the action of $x$ on $V_n=\left<y_0,\ldots,y_n\right>$ defined in Section~\ref{sec:stfil} 
in the context of standard filiform Lie algebras.

\begin{lemma}\label{lem:xe}
    Suppose that $n\geq 0$ and, for $i\in\{0,\ldots,n\}$, let $w_i=(1/i!)x_1^{n-i}x_2^i$. Then 
    $e(w_0)=0$ and $e(w_i)=w_{i-1}$ for $i\in\{1,\ldots,n\}$. 
\end{lemma}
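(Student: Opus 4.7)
The plan is to verify both claims by direct computation using the explicit formula $e(g)=x_1\,\partial g/\partial x_2$ recalled just above the lemma statement, applied to the monomials $w_i=(1/i!)x_1^{n-i}x_2^i$. Since each $w_i$ is a scalar multiple of a single monomial in two variables, the partial derivative with respect to $x_2$ is immediate, so no inductive argument or representation-theoretic input is needed.

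For the first claim, I would observe that $w_0=x_1^n$ does not involve $x_2$, hence $\partial w_0/\partial x_2=0$ and therefore $e(w_0)=0$. For the second claim, I would fix $i\in\{1,\ldots,n\}$ and compute
\[
e(w_i)=x_1\cdot\frac{\partial}{\partial x_2}\!\left(\frac{1}{i!}x_1^{n-i}x_2^i\right)=\frac{i}{i!}\,x_1^{n-i+1}x_2^{i-1}=\frac{1}{(i-1)!}\,x_1^{n-(i-1)}x_2^{i-1}=w_{i-1},
\]
which is exactly the required identity.

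There is essentially no obstacle to this proof; the only thing to be careful about is the bookkeeping of the factorials, specifically the cancellation $i/i!=1/(i-1)!$, which is what makes the rescaling by $1/i!$ the correct choice to turn $e$ into a shift operator on the basis $\{w_0,\ldots,w_n\}$. It is worth noting, as context for what follows, that this rescaled basis makes the action of $e$ on $U_n$ agree with the action of $x$ on $V_n=\langle y_0,\ldots,y_n\rangle$ from Section~\ref{sec:stfil} under the identification $w_i\leftrightarrow y_i$, which is presumably the reason the lemma is recorded here.
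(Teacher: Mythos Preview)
Your proposal is correct and follows essentially the same direct computation as the paper's proof, which simply writes out $e(w_i)=\frac{1}{i!}e(x_1^{n-i}x_2^i)=i\cdot\frac{1}{i!}x_1^{n-i+1}x_2^{i-1}=w_{i-1}$. If anything, your version is slightly more careful in separating the case $i=0$ explicitly.
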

\begin{proof}
    This is easy: just calculate for $i\geq 0$ that 
    \[
        e(w_{i})=\frac{1}{i!}e(x_1^{n-i}x_2^i)=i\cdot\frac 1{i!}x_1^{n-i+1}x_2^{i-1}=\frac 1{(i-1)!}x_1^{n-i+1}x_2^{i-1}
        =w_{i-1}.
    \] 
\end{proof}

For a Lie algebra $\g$ and for a $\g$-module $U$, let $U^\g$ denote the space 
\[
    U^\g=\{u\in U\mid a(u)=0\mbox{ for all } a\in \g\}
\] 
of $\g$-invariants in $U$. 
If $\g=\left<x\right>$, then we write $U^x$ for $U^\g$ and $U^x$ is the zero eigenspace of $x$ acting 
on $U$. 

\begin{corollary}\label{cor:semiinv}
    The following are valid for a  finite-dimensional $\slg$-module $U$ written as 
    a direct sum $U=W_1\oplus\cdots\oplus W_d$ of simple $\slg$-submodules. 
    \begin{enumerate}
    \item  For $i\in\{1,\ldots,d\}$, let $z_i\in W_i^e\setminus\{0\}$ 
    (which is unique up scalar multiple). Then $\{z_1,\ldots,z_d\}$ is a basis of $U^e$. 
    \item $\dim U^e=\dim U^f=d$. 
    \item $\dim U^e=\dim Y_0+\dim Y_1$ where $Y_0$ and $Y_1$ are the eigenspaces in $U$ of the operator $h$ corresponding to the eigenvalues $0$ and $1$, respectively.
    \end{enumerate}
\end{corollary}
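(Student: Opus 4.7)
The plan is to reduce all three parts to the classification of simple $\slg$-modules in Theorem~\ref{th:slmods} together with Lemma~\ref{lem:xe}. The key observation is that $e$, $f$ and $h$ all act block-diagonally with respect to the decomposition $U=W_1\oplus\cdots\oplus W_d$, so kernels and eigenspaces of these operators split as direct sums of the corresponding subspaces of the $W_i$.

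For part~(1), I would first note that $U^e=W_1^e\oplus\cdots\oplus W_d^e$. By Theorem~\ref{th:slmods} each $W_i$ is isomorphic to some $U_{n_i}$, and Lemma~\ref{lem:xe} shows that in $U_{n_i}$ the kernel of $e$ is spanned by the highest weight vector $w_0$, so $\dim W_i^e=1$. Thus the given $z_i$ already span $W_i^e$, and together they form a basis of $U^e$; this also confirms the uniqueness claim in parentheses.

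For part~(2), I would run the symmetric argument for $f$. Exactly as in Lemma~\ref{lem:xe}, but starting from the lowest weight end of the basis of $U_n$, one checks that $f$ has a one-dimensional kernel in each $U_n$, spanned by the lowest weight vector $x_2^n$. Hence $\dim W_i^f=1$ for every~$i$ and $\dim U^f=d$, so $\dim U^e=\dim U^f=d$ by combining with~(1). For part~(3), I would use the weight description of $U_n$ recalled at the start of Section~\ref{sec:sl2}: the $h$-weights of $U_n$ are $n,n-2,\ldots,-n+2,-n$, each with multiplicity one. Therefore each simple summand $W_i\cong U_{n_i}$ contributes a one-dimensional subspace to exactly one of $Y_0$ or $Y_1$ (to $Y_0$ if $n_i$ is even, to $Y_1$ if $n_i$ is odd) and nothing to the other. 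Summing over $i$ gives $\dim Y_0+\dim Y_1=d=\dim U^e$.

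There is no real obstacle here; the only point requiring mild care is that weights $0$ and $1$ cannot both occur in a single $U_n$, since the weights of $U_n$ all have the same parity as $n$. This is what prevents double-counting in~(3). One might also wish to record the $f$-analogue of Lemma~\ref{lem:xe} explicitly before using it in~(2), even though it is entirely parallel.
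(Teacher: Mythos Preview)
Your argument is correct and follows essentially the same route as the paper: both reduce to the decomposition into simple summands $W_i\cong U_{n_i}$, use Lemma~\ref{lem:xe} to see that $e$ (and symmetrically $f$) has a one-dimensional kernel on each summand, and then count weights to obtain~(3). The only cosmetic difference is that the paper phrases the computation of $\dim U^e$ in terms of the Jordan normal form of $e$ (one nilpotent block per summand), whereas you compute each $W_i^e$ directly; the content is the same.
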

\begin{proof}
    By Weyl's Theorem (see~\cite[Theorem~8, Section~III.7]{jac}),  
    \begin{equation}\label{eq:ws}
        U=W_1\oplus\cdots \oplus W_d
    \end{equation} where the $W_i$ are irreducible $\slg$-modules. Then each 
    $W_i$ is isomorphic to $U_{k_i}$ where $k_i\geq 0$ (Theorem~\ref{th:slmods}). By Lemma~\ref{lem:xe}, the matrix of $e$ on $W_i$ is conjugate to a single Jordan block that corresponds to the eigenvalue zero. Thus, the matrix of $e$ on $U$ is conjugate to a block-diagonal matrix with $d$ Jordan blocks each of which corresponds to the eigenvalue zero. Noting that $U^e$ is the zero eigenspace of $e$ considered 
    as an operator on $U$, 
    we obtain assertions (1) and (2).  
    For the assertion that $d=\dim Y_0+\dim Y_1$, choose for each $W_i$ a basis consisting of 
    $h$-eigenvectors and note that each irreducible component $W_i$ contributes with 
    dimension one to either $Y_0$ (if $k_i$ is even) or to $Y_1$ (if $k_i$ is odd).
\end{proof}

\begin{lemma}\label{lem:ugen}
    Suppose that $U$ is a finite-dimensional $\slg$-module. Suppose that $z\in U^e$ is an $h$-eigenvector of eigenvalue $w$. Then $z$ is contained in $\bigoplus_i (W_i)^e$ where the $W_i$ are the 
    irreducible components of $U$ that are isomorphic to $U_w$. Furthermore, the $\slg$-submodule 
    generated by $z$ is irreducible and is isomorphic to $U_w$. 
\end{lemma}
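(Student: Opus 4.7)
My plan is to decompose $U$ via Weyl's theorem to obtain the first claim, and then to analyse the cyclic submodule $M = U(\slg)\cdot z$ by directly computing $M^e$.

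First I would invoke Weyl's theorem to write $U = W_1 \oplus \cdots \oplus W_d$ with each $W_i$ irreducible, so that $W_i \cong U_{k_i}$ for some $k_i \geq 0$ by Theorem~\ref{th:slmods}; write $z = z_1 + \cdots + z_d$ accordingly. Since $e$ and $h$ preserve each summand, the hypotheses $e(z) = 0$ and $h(z) = w z$ force $e(z_i) = 0$ and $h(z_i) = w z_i$ for every $i$. By Lemma~\ref{lem:xe}, $(W_i)^e$ is one-dimensional, spanned by an $h$-eigenvector of weight $k_i$; hence $z_i \neq 0$ implies $k_i = w$, i.e.\ $W_i \cong U_w$. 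This gives the first assertion.

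For the second assertion I would set $M = U(\slg)\cdot z$ and show that $M^e = \F z$, whence $M$ is irreducible by Corollary~\ref{cor:semiinv}(2). The explicit basis of $U_w$ in Lemma~\ref{lem:xe} together with the formula for the action of $f$ shows that $f^{w+1}$ annihilates any highest-weight vector of $U_w$, so applied to each nonzero $z_i$ it yields $f^{w+1} z = 0$; combining this with $e z = 0$ and PBW gives $M = \F z + \F f z + \cdots + \F f^w z$. Projecting these $w+1$ vectors onto any $W_i$ with $z_i \neq 0$ produces a basis of $U_w$, so they are linearly independent and $\dim M = w+1$. The standard identity $[e, f^k] = k f^{k-1}(h - k + 1)$ in $U(\slg)$ then gives $e(f^k z) = k(w - k + 1) f^{k-1} z$; in characteristic zero the scalar $k(w-k+1)$ is nonzero for $1 \leq k \leq w$, and $f^{k-1}z \neq 0$ by the same projection argument, so only $k = 0$ contributes to $M^e$. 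Hence $M^e = \F z$, $M$ is irreducible, and since it contains an $h$-eigenvector of weight $w$ it must be isomorphic to $U_w$ by Theorem~\ref{th:slmods}.

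The main obstacle is the computation of $M^e$; the rest is a formal consequence of the semisimplicity of finite-dimensional $\slg$-modules and of the explicit description of $U_w$ in Lemma~\ref{lem:xe}. The key routine-but-essential point is the non-vanishing of the scalar $k(w-k+1)$ from the commutator identity, which is precisely what characteristic zero buys us.
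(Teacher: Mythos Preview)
Your proof is correct and follows essentially the same approach as the paper: both decompose $U$ via Weyl's theorem to obtain the first claim, and both identify the cyclic submodule generated by $z$ with the span of $z, f z, \ldots, f^w z$. The paper simply asserts that these vectors form a basis of an $\slg$-submodule isomorphic to $U_w$, whereas you supply the details (linear independence by projection onto a nonzero component, irreducibility via the computation $M^e=\F z$ together with Corollary~\ref{cor:semiinv}); this is a fuller version of the same argument rather than a different route.
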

\begin{proof}
The fact that $z\in\bigoplus_i (W_i)^e$ follows from the weight space decomposition~\eqref{eq:ws} for $U$.
One can show by induction that $f^i(z)$ is an $h$-eigenvector with eigenvalue 
$w-2i$ for all $i\in \{0,\ldots,n\}$ and these vectors form the basis of an $\slg$-submodule isomorphic 
to $U_w$. 
\end{proof}

\begin{theorem}\label{th:cg}
    Suppose that $U_m$ and $U_n$ are $\slg$-modules as defined above. Then the following decomposition 
    of $\slg$-modules is valid:
    \[
        U_m\otimes U_n\cong\bigoplus_{i=0}^{\min\{m,n\}}U_{m+n-2i}.
    \]
Suppose that $u_0,\ldots,u_m$ and $w_0,\ldots,w_n$ are bases for $U_m$ and $U_n$ consisting of $h$-eigenvectors, respectively,
such that $e(u_0)=e(w_0)=0$ and $e(u_i)=u_{i-1}$ and $e(w_j)=w_{j-1}$ for $i\in\{1,\ldots,m\}$
and $j\in\{1,\ldots,n\}$. Define, for $i=0,1,\ldots,\min\{m,n\}$,
\begin{equation}\label{eq:zi}
   z_i= \sum_{d=0}^i(-1)^d {u_d\otimes w_{i-d}}.
\end{equation}
Then $e(z_i)=0$ and the irreducible component  of $U_m\otimes U_n$ isomorphic to $U_{m+n-2i}$ is generated by 
$z_{i}$.
\end{theorem}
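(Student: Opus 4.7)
The plan is to first verify $e(z_i)=0$ by a direct telescoping computation on the tensor product action, then identify the $h$-weight of $z_i$, and finally combine Lemma~\ref{lem:ugen} with a dimension count to obtain the complete Clebsch--Gordan decomposition.

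For the first step, applying the Leibniz rule $e(u\otimes w)=e(u)\otimes w+u\otimes e(w)$ term by term in~\eqref{eq:zi}, and adopting the convention $u_{-1}=w_{-1}=0$, one obtains
\[
e(z_i)=\sum_{d=0}^i(-1)^d\bigl(u_{d-1}\otimes w_{i-d}+u_d\otimes w_{i-d-1}\bigr).
\]
Reindexing the first inner sum via $d\mapsto d+1$ flips its sign and makes it identical to the second sum, so the two cancel term by term and $e(z_i)=0$. The vector $z_i$ is nonzero because the summands $u_d\otimes w_{i-d}$ are distinct basis vectors of $U_m\otimes U_n$, and each such summand has $h$-weight $(m-2d)+(n-2(i-d))=m+n-2i$, so $z_i$ is an $h$-eigenvector of weight $m+n-2i$.

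By Lemma~\ref{lem:ugen}, the $\slg$-submodule $M_i$ generated by $z_i$ is irreducible and isomorphic to $U_{m+n-2i}$. These submodules have pairwise distinct dimensions, and induction on $i$ then shows that the intersection $M_i\cap\sum_{j<i}M_j$ vanishes: otherwise, being a nonzero submodule of the irreducible $M_i$, it would equal $M_i$, and then $M_i$ would embed into a direct sum of irreducibles of isomorphism types different from $U_{m+n-2i}$, contradicting Theorem~\ref{th:slmods}. Hence $\sum_i M_i$ is direct.

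To finish, assume without loss of generality $m\leq n$ and compute
\[
\sum_{i=0}^m\dim U_{m+n-2i}=\sum_{i=0}^m(m+n-2i+1)=(m+1)(n+1)=\dim(U_m\otimes U_n),
\]
so the direct sum $\bigoplus_i M_i$ exhausts $U_m\otimes U_n$. The main obstacle is the sign-bookkeeping in verifying $e(z_i)=0$; the remaining steps are routine consequences of Lemma~\ref{lem:ugen} and of Weyl's Theorem (already invoked through Corollary~\ref{cor:semiinv}).
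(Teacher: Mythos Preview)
Your proof is correct. The telescoping verification of $e(z_i)=0$ and the weight computation match the paper exactly, but the overall logical structure differs: the paper \emph{quotes} the Clebsch--Gordan decomposition as a known result (citing~\cite{kowalski}) and then uses Lemma~\ref{lem:ugen} together with the uniqueness of the $U_{m+n-2i}$-component to conclude that $z_i$ generates it. You instead \emph{derive} the decomposition from scratch: after producing the $z_i$, you invoke Lemma~\ref{lem:ugen} to get irreducible submodules $M_i\cong U_{m+n-2i}$, argue via Schur's lemma that their sum is direct (since the isomorphism types are pairwise distinct), and then close with the dimension count $\sum_{i=0}^m(m+n-2i+1)=(m+1)(n+1)$. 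Your route is more self-contained and actually proves the Clebsch--Gordan formula rather than assuming it; the paper's route is shorter because it outsources that step to the literature.
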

\begin{proof}
    The decomposition of $U_m\otimes U_n$ is well known as the Clebsch--Gordan formula 
    for $\slg$~\cite[Theorem~2.6.3]{kowalski}.
    The fact that $e(z_i)=0$ is true since 
\begin{align*}
    e(z_i)&=e\left (\sum_{d=0}^i(-1)^d{u_d\otimes w_{i-d}}\right )=
    \sum_{d=0}^i(-1)^d{e(u_d)\otimes w_{i-d}}+\sum_{d=0}^i(-1)^d{u_d\otimes e(w_{i-d})}\\&=
    \sum_{d=1}^i(-1)^d{u_{d-1}\otimes w_{i-d}}+\sum_{d=0}^{i-1}(-1)^d{u_d\otimes w_{i-d-1}}=0.
\end{align*}     
    Further, it also follows that 
    $z_i$ is an $h$-eigenvector of eigenvalue $n+m-2i$. By Lemma~\ref{lem:ugen}, $z_i$ 
    is contained in the direct sum of the $\slg$-submodules that are isomorphic to 
    $U_{n+m-2i}$, but the Clebsch--Gordan decomposition implies that there is a unique such submodule and 
    this must be generated by $z_i$. 
\end{proof}

The element $z_i$ defined in~\eqref{eq:zi} can be viewed as a Casimir element for the operator $e$
on the tensor product $U_m\otimes U_n$. 

\begin{lemma}\label{lem:ef}
    Let $z\in U_n$ be an $h$-eigenvector  of weight $n-2i$ with some $i\in\{0,\ldots,n\}$. Then 
    \[
        e(f(z))=(n-i)(i+1)z.
    \]
\end{lemma}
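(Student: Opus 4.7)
My plan is to reduce the identity to a single basis-vector computation by exploiting one-dimensionality of weight spaces. By Theorem~\ref{th:slmods} the module $U_n$ is irreducible, and the basis $x_1^n, x_1^{n-1}x_2, \ldots, x_2^n$ consists of $h$-eigenvectors with pairwise distinct weights $n, n-2, \ldots, -n$. Consequently the weight-$(n-2i)$ eigenspace of $U_n$ is one-dimensional, spanned by $w_i$ from Lemma~\ref{lem:xe}. Since $f$ shifts weight by $-2$ and $e$ shifts weight by $+2$, the composite $e\circ f$ preserves this eigenspace and hence acts on it by a scalar. So it suffices to establish the formula for $z = w_i$ and extend by linearity to an arbitrary $h$-eigenvector of weight $n-2i$.

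The key calculation is to evaluate $f(w_i)$ using the explicit formula $f(g) = x_2\,\partial g/\partial x_1$ for the action of $\slg$ on $\F[x_1,x_2]$; a direct computation yields $f(w_i) = (n-i)(i+1)\,w_{i+1}$. Combining this with Lemma~\ref{lem:xe}, which supplies $e(w_{i+1}) = w_i$, produces the claimed factor $(n-i)(i+1)$. The boundary case $i = n$ is automatic, since both $f(w_n)$ and the factor $(n-i)$ vanish; the case $i = 0$ needs no separate treatment.

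I do not foresee any real obstacle: the statement is essentially a routine computation once irreducibility is used to localize the analysis to $w_i$. A slightly more conceptual variant would avoid computing $f$ on the basis directly by invoking the commutation relation $[e,f] = h$ to write $e(f(z)) = f(e(z)) + h(z)$, evaluating $e(z)$ as a scalar multiple of $w_{i-1}$ via Lemma~\ref{lem:xe}, and combining this with $h(z) = (n-2i)z$ and the elementary identity $(n-i+1)i + (n-2i) = (n-i)(i+1)$; this has the mild advantage of making the appearance of the product $(n-i)(i+1)$ transparent as a sum rather than hiding it inside a differentiation.
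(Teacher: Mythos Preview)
Your proposal is correct and follows essentially the same approach as the paper: reduce to a single basis vector via the one-dimensionality of the weight-$(n-2i)$ space, then compute $e(f(\cdot))$ directly on that vector. The paper works with $x_1^{n-i}x_2^i$ rather than the scaled $w_i$, but this is an immaterial difference; your exposition is in fact a bit more explicit about why the reduction is justified.
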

\begin{proof}
    We can assume without loss of generality that $U_n=\left<x_1^n,x_1^{n-1}x_2,\ldots,x_1x_2^{n-1},x_2^n\right>$ 
    and $z=x_1^{n-i}x_2^i$. Then 
    \[
        e(f(z))=e(f(x_1^{n-i}x_2^i))=(n-i)(i+1)x_1^{n-i}x_2^i.
    \]
\end{proof}

\begin{corollary}\label{cor:d_prod}
    Let $U_n$ be an $(n+1)$-dimensional irreducible $\slg$-module and consider 
    the polynomial algebra $\F[U_n]$ as an 
    $\slg$-module. For $i\geq 0$, 
    let $U_{n,i}$ denote the space of homogeneous polynomials of $\F[U_n]$ of degree $i$. 
    Let $z_1\in U_{n,k_1}^e$ and $z_2\in U_{n,k_2}^e$ that are also $h$-eigenvectors 
    of weight $w_1$ and $w_2$, respectively. 
    Suppose that $d\leq \min\{w_1,w_2\}$ and define
    for $i\in\{1,2\}$ and for $k\in\{0,\ldots,d\}$, 
    \[
        z_{i,0}=z_i\quad\mbox{and}\quad z_{i,k}=\frac 1{k(w_i-k+1)}f(z_{i,k-1})\quad\mbox{for}\quad k\geq 1.
    \]
    Set   
    \[
        \dprod{z_1}{z_2}d = \sum_{i=0}^{d} (-1)^iz_{1,i}z_{2,d-i}.
    \]
    Then $\dprod{z_1}{z_2}d\in U_{n,k_1+k_2}^e$ and it is an 
    $h$-eigenvector of weight $w_1+w_2-2d$. Furthermore, letting $y\in U_{n,1}^e\setminus\{0\}$ be fixed
    and $z_1,\ldots,z_m$ be a basis of $U_{n,k-1}^e$ formed by $h$-eigenvectors, 
    the elements $\dprod{z_i}{y}{d}$, with $i\in\{1,\ldots,m\}$ and $d\geq 0$ 
    generate $U_{n,k}^e$ and  are $h$-eigenvectors.
\end{corollary}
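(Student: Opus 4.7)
My plan is to first establish the key relation $e(z_{i,k})=z_{i,k-1}$ for all $k\geq 1$, and then reduce the remaining assertions to Theorem~\ref{th:cg} via the multiplication map on $\F[U_n]$. A short induction on $k$ using $[e,f]=h$ yields
\[
    e(f^k(z_i))=k(w_i-k+1)\,f^{k-1}(z_i)
\]
whenever $e(z_i)=0$ and $h(z_i)=w_iz_i$; the normalizing scalars in the recursion $z_{i,k}=\frac{1}{k(w_i-k+1)}f(z_{i,k-1})$ are chosen precisely so that $e(z_{i,k})=z_{i,k-1}$. The hypothesis $d\leq\min\{w_1,w_2\}$ guarantees these denominators are nonzero. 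By Lemma~\ref{lem:ugen}, the $\slg$-submodule $W_i$ generated by $z_i$ is irreducible of type $U_{w_i}$, and $z_{i,0},z_{i,1},\ldots,z_{i,w_i}$ is a basis of $W_i$ on which $e$ acts exactly as in the setup of Theorem~\ref{th:cg}.

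With this in hand, the first two assertions are immediate. The multiplication map $\mu\colon\F[U_n]\otimes\F[U_n]\to\F[U_n]$ is $\slg$-equivariant, since $\slg$ acts by derivations on $\F[U_n]$. Applying Theorem~\ref{th:cg} to $W_1\otimes W_2$ shows that $\sum_{i=0}^d(-1)^i\,z_{1,i}\otimes z_{2,d-i}$ is annihilated by $e$ and has $h$-weight $w_1+w_2-2d$, and its image under $\mu$ is exactly $\dprod{z_1}{z_2}{d}$.

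For the generation statement, I would decompose $U_{n,k-1}=W_1\oplus\cdots\oplus W_m$ into irreducible $\slg$-submodules, with $W_i$ chosen to be the submodule generated by $z_i$. Such a decomposition exists because $U_{n,k-1}$ is semisimple (Weyl's theorem), each $W_i$ is irreducible by Lemma~\ref{lem:ugen}, and the linear independence of $z_1,\ldots,z_m$ in $U_{n,k-1}^e$ forces the sum $W_1+\cdots+W_m$ to be direct (if some $W_i$ lay in the sum of the others, then $z_i$ would be a linear combination of the remaining $z_j$). Tensoring with $U_n$ and applying Theorem~\ref{th:cg} to each summand $W_i\otimes U_n$ shows that $(U_{n,k-1}\otimes U_n)^e$ is spanned by the Clebsch--Gordan vectors whose images under $\mu$ are exactly the elements $\dprod{z_i}{y}{d}$. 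Since $\mu\colon U_{n,k-1}\otimes U_n\to U_{n,k}$ is a surjective $\slg$-homomorphism and both sides are semisimple, one may split $\ker\mu$ off as an $\slg$-direct summand, which yields surjectivity of $\mu$ on $e$-invariants and completes the argument.

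The one step that is not routine bookkeeping is this last surjectivity of $\mu$ on $e$-invariants: it genuinely relies on complete reducibility of $\slg$-modules rather than on any explicit calculation with the $z_{i,k}$. Everything else amounts to a careful unpacking of Theorem~\ref{th:cg} and Lemma~\ref{lem:ugen} together with the commutation relation $[e,f]=h$.
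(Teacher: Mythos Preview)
Your approach is essentially the paper's: both reduce to Theorem~\ref{th:cg} through the $\slg$-equivariant multiplication map, and both verify $e(z_{i,k})=z_{i,k-1}$ first (you do this directly from $[e,f]=h$, the paper cites Lemma~\ref{lem:ef}, which packages the same computation). Your explicit appeal to semisimplicity for surjectivity of $\mu$ on $e$-invariants is a welcome clarification of a step the paper leaves implicit.

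One small gap: you show the sum $W_1+\cdots+W_m$ is direct, but you do not say why it exhausts $U_{n,k-1}$. This is needed, since otherwise $(U_{n,k-1}\otimes U_n)^e$ could be larger than the span of your Clebsch--Gordan vectors. The fix is immediate: if $W=W_1\oplus\cdots\oplus W_m$ were a proper submodule, semisimplicity would give a nonzero complement $W'$, and then $(W')^e\neq 0$ would contribute an extra dimension to $U_{n,k-1}^e$ beyond $\langle z_1,\ldots,z_m\rangle$, contradicting that the $z_i$ form a basis. (Your directness argument also tacitly uses that $(\sum_{j\neq i}W_j)^e=\sum_{j\neq i}W_j^e=\sum_{j\neq i}\F z_j$, which is fine but worth stating.)
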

\begin{proof}
    Suppose that $W_1$ and $W_2$ are the irreducible components of 
    $U_{n,k_1}$ and $U_{n,k_2}$ that contain $z_1$ and $z_2$, respectively (they exist by Lemma~\ref{lem:ugen}). 
    Then $W_1\cong U_{w_1}$ and $W_2\cong U_{w_2}$.
    The fact that $e(z_{i,0})=0$, for $i=1,2$, is clear. Suppose that $k\geq 1$ note that $z_{i,k-1}$ 
    is an $h$-eigenvector of weight $w_i-2(k-1)$. Thus, Lemma~\ref{lem:ef} implies that
    \[
         e(z_{i,k})=\frac{1}{k(w_i-k+1)}e(f(z_{i,k-1}))=\frac{k(w_i-k+1)}{k(w_i-k+1)}z_{i,k-1}=z_{i,k-1}.
    \]
    The product space $W_1W_2$ is an epimorphic image of the tensor product $W_1\otimes W_2$ under the $\slg$-homomorphism 
    $\psi:W_1\otimes W_2\to W_1W_2$ induced by multiplication. Observe that $\dprod{z_1}{z_2}{d}$ is the image of 
\[    
    \sum_{i=0}^{d} (-1)^iz_{1,i}\otimes z_{2,d-i}
\] 
under $\psi$, and so Theorem~\ref{th:cg} implies that $e(\dprod{z_1}{z_2}d)=0$ and also that 
$(W_1W_2)^e$ is generated by such elements. The final assertion follows from the fact that $U_{n,k}=U_{n,k-1}U_{n,1}$ and that $U_{n,1}$ is irreducible, and in particular $U_{n,1}^e=\left<y\right>$.   
\end{proof}

\section{Explicit generators of $Z_n$}\label{sec:expgens}

In this section we return to our investigation of the invariant algebra 
$Z_n=\F[\bfy_n]^x$ as seen in Sections \ref{sec:stfil}--\ref{sec:gens}. 
Assume throughout this section that $\F$ is a field of characteristic zero. 
Note that $x$ acts on the vector space 
$V_n=\left<y_0,y_1,\ldots,y_n\right>$ and Lemma~\ref{lem:xe} shows that its action is equivalent 
to the action of $e\in\slg$. Thus, there exists $\hat f_n\in\mbox{End}(V_n)$ such that 
the map $x\mapsto e,\ \hat f_n\mapsto f,\ [x,\hat f_n]\mapsto h$ can be extended to an isomorphism of Lie algebras
$\langle x,\hat f_n, [x,\hat f_n]\rangle\to \slg$. In fact, $\hat f_n$ is determined in~\cite[Theorem~3.1]{bed_ukrj} and 
in \cite[Theorem~3.1]{bed_arx} as 
\begin{equation}\label{eq:fhat}
    \hat f_n(y_i)=(i+1)(n-i)y_{i+1}\quad\mbox{for all}\quad i\in\{0,\ldots,n\}.
\end{equation}
It is easy to compute, for all $y_i$, that $[x,\hat f_n](y_i)=(n-2i)y_i$ and that $[x,[x,\hat f_n]]=-2x$,  while
$[\hat f_n,[x,\hat f_n]]=2\hat f_n$ which verifies the isomorphism $\langle x,\hat f_n,h\rangle\to \slg$.
Because of the isomorphism with $\slg$,  $\hat h=[x,\hat f_n]$ is an operator on $V_n$ with eigenvectors 
$y_0,y_1,\ldots,y_n$ corresponding to eigenvalues $n,n-2,\ldots,-n+2,-n$.
For $k\geq 0$, let $Z_{n,k}$ denote the degree-$k$ homogeneous component of $Z_n$. 
%The action of $\langle x,\hat f_n,\hat h\rangle$ can be extended to $\F[\bfy_n]$ and 
%$\F[\bfy_n]$ becomes an $\slg$-module isomorphic to $\F[U_n]$ considered in Corollary~\ref{cor:d_prod}. 
The algebras $\F[\bfy_n]$ and $\F[U_n]$ in Corollary~\ref{cor:d_prod} are isomorphic as 
$\slg$-modules. Furthermore, under the obvious isomorphism between $\F[\bfy_n]$ and $\F[U_n]$, 
$Z_n$ corresponds to $\F[U_n]^e$, while $Z_{n,k}$ corresponds to $U_{n,k}^e$. 
Thus, the following result is an immediate consequence of Corollary~\ref{cor:d_prod}.

\begin{proposition}\label{prop:dprod}
    Suppose that  $z_1\in Z_{n,k_1}$ and 
    $z_2\in Z_{n,k_2}$ 
    are $\hat h$-eigenvectors with eigenvalues $w_1$ and $w_2$ respectively. 
    Suppose that $d\leq \min\{w_1,w_2\}$ and define
    for $i\in\{1,2\}$ and for $k\in\{0,\ldots,d\}$, 
    \[
        z_{i,0}=z_i\quad\mbox{and}\quad z_{i,k}=\frac 1{k(w_i-k+1)}\hat f_n(z_{i,k-1})\quad\mbox{for}\quad k\geq 1.
    \]
    Define   
    \begin{equation}\label{eq:zi_pol}
        \dprod{z_1}{z_2}d = \sum_{i=0}^{d} (-1)^iz_{1,i}z_{2,d-i}.
    \end{equation}
    Then $\dprod{z_1}{z_2}d\in Z_{n,k_1+k_2}$ and this element is an $\hat h$-eigenvector of weight $w_1+w_2-2d$. 
    Furthermore, letting $z_1,\ldots,z_m$ be a basis of $Z_{n,k-1}$ formed by $\hat h$-eigenvectors, 
    the elements $\dprod{z_i}{y_0}{d}$, with $i\in\{1,\ldots,m\}$ and $d\geq 0$ 
    generate $Z_{n,k}$ and  are $\hat h$-eigenvectors.
\end{proposition}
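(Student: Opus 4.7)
The plan is to deduce Proposition~\ref{prop:dprod} from Corollary~\ref{cor:d_prod} by transporting the result across an explicit $\slg$-algebra isomorphism $\F[\bfy_n]\to\F[U_n]$. The identification of $\F[\bfy_n]$ with $\F[U_n]$ as $\slg$-modules is mentioned just before the statement, so the task is to make this identification concrete enough that the elements $z_{i,k}$ and the product $\dprod{z_1}{z_2}{d}$ defined in the present setting are matched with the ones in Corollary~\ref{cor:d_prod}.

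First I would define the linear map $\varphi\colon V_n\to U_n$ by $y_i\mapsto w_i=(1/i!)x_1^{n-i}x_2^i$, which is an isomorphism of vector spaces. By Lemma~\ref{lem:xe} we have $e(\varphi(y_i))=\varphi(x(y_i))$, so $\varphi$ intertwines $x$ with $e$. A short calculation using the formula $f(x_1^{n-i}x_2^i)=(n-i)x_1^{n-i-1}x_2^{i+1}$ shows that $f(w_i)=(i+1)(n-i)w_{i+1}$, and comparing with \eqref{eq:fhat} gives $f(\varphi(y_i))=\varphi(\hat f_n(y_i))$. Both operators $h$ and $\hat h=[x,\hat f_n]$ act on the basis vectors $y_i$ and $w_i$ with the same eigenvalue $n-2i$. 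Hence $\varphi$ is an isomorphism of $\slg$-modules, where on the left-hand side the $\slg$-action is the one induced from $x$ and $\hat f_n$.

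Next I would extend $\varphi$ uniquely to a graded $\F$-algebra isomorphism $\F[\bfy_n]\to\F[U_n]$, sending $Z_{n,k}=\F[\bfy_n]_k^{x}$ onto $U_{n,k}^{e}$. Because $\varphi$ intertwines the generators $x,\hat f_n,\hat h$ with $e,f,h$, the extended map intertwines the full $\slg$-actions on the two polynomial algebras by the Leibniz rule. In particular, $\hat h$-weight spaces correspond to $h$-weight spaces, and $\hat f_n$ acts on $\F[\bfy_n]$ exactly as $f$ acts on $\F[U_n]$ via $\varphi$.

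Finally, applying $\varphi$ to the definition of $\dprod{z_1}{z_2}{d}$ in \eqref{eq:zi_pol} shows that $\varphi(\dprod{z_1}{z_2}{d})=\dprod{\varphi(z_1)}{\varphi(z_2)}{d}$ in the sense of Corollary~\ref{cor:d_prod}, because the scalars $1/(k(w_i-k+1))$ are identical in the two definitions. Every conclusion of Proposition~\ref{prop:dprod}---that $\dprod{z_1}{z_2}{d}$ lies in $Z_{n,k_1+k_2}$, that it is an $\hat h$-eigenvector of weight $w_1+w_2-2d$, and that the elements $\dprod{z_i}{y_0}{d}$ generate $Z_{n,k}$---then follows directly from the corresponding statement in Corollary~\ref{cor:d_prod}. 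There is no real obstacle here beyond checking the compatibility of $\hat f_n$ with $f$ on the basis, which is the one computation the reader might want spelled out; everything else is formal transport of structure.
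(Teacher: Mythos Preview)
Your proposal is correct and follows exactly the approach the paper takes: the paper simply asserts that $\F[\bfy_n]$ and $\F[U_n]$ are isomorphic as $\slg$-modules under ``the obvious isomorphism'' and declares the proposition an immediate consequence of Corollary~\ref{cor:d_prod}. You have just made this isomorphism explicit and carried out the one verification (that $\hat f_n$ corresponds to $f$ via $y_i\mapsto w_i$) that the paper leaves implicit.
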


The element $\dprod zz{d}$ appears as $\tau_d(z)$ in~\cite{bed_arx,bed_ukrj}. 
Note that using equation~\eqref{eq:zi_pol}, we obtain that 
\[
    \dprod{y_0}{y_0}{d}=\sum_{i=0}^d(-1)^i y_iy_{d-i}.
\]
Applying Proposition~\ref{prop:dprod} recursively for the homogeneous components $Z_{n,1}$, $Z_{n,2}$, etc., 
we obtain the following. 
\begin{corollary}
    For $k\geq 1$, the homogeneous component 
    $Z_{n,k}$ is generated as a vector space by elements of the form 
    \[
        \dprod{(\dprod{(\dprod{y_0}{y_0}{d_1})}{y_0)}{d_2}\cdots}{y_0}{d_{k-1}}
    \]
    where the number of factors is $k$ and the $d_i$ are chosen in such a way that 
    $\circ_{d_i}$ be defined for each $i$. In particular, the ring $Z_n$ admits a minimal generating set 
    formed by polynomials of this form.
\end{corollary}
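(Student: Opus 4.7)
The plan is to establish both parts by induction on $k$, with Proposition~\ref{prop:dprod} supplying essentially all the work. For the base case $k=1$, computing the kernel of $x$ directly on the linear space $V_n=\langle y_0,\ldots,y_n\rangle$ shows that $Z_{n,1}=\langle y_0\rangle$, and the single factor $y_0$ (with no $\circ$-operations applied) already has the shape prescribed in the statement, interpreting the empty sequence of $d_i$'s as the case of one factor.

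For the inductive step, assume $Z_{n,k-1}$ is spanned by iterated circle products of the prescribed form. By Proposition~\ref{prop:dprod}, each such iterated product is automatically an $\hat h$-eigenvector, so one can extract from this spanning set a basis $z_1,\ldots,z_m$ of $Z_{n,k-1}$ consisting of $\hat h$-eigenvectors. The same proposition then guarantees that the elements $\dprod{z_i}{y_0}{d}$, with $i\in\{1,\ldots,m\}$ and $d$ ranging over the admissible nonnegative integers, span $Z_{n,k}$. Replacing each $z_i$ by its $(k-1)$-factor circle-product expression yields precisely the spanning set of $k$-factor circle products claimed. The only point requiring care is that the transition from the induction-hypothesis spanning family to a basis preserves the $\hat h$-eigenvector property, which is automatic because iterated circle products are $\hat h$-eigenvectors by Proposition~\ref{prop:dprod}.

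For the assertion on minimal generators, the Mauer--Weitzenb\"ock Theorem cited after Theorem~\ref{th:th2} ensures that $Z_n$ is finitely generated as an $\F$-algebra, and $Z_n$ is graded with $(Z_n)_0=\F$. A minimal homogeneous generating set is then obtained by lifting any homogeneous basis of $Z_n^+/(Z_n^+)^2$, where $Z_n^+=\bigoplus_{k\geq 1}Z_{n,k}$. Since each $Z_{n,k}$ is spanned by iterated circle products, one can select within each degree a basis of the corresponding graded piece of the quotient from among these iterated circle products themselves, yielding the desired minimal generating set of the asserted form. The main conceptual obstacle would have been coordinating parenthesisation with $\hat h$-weight bookkeeping across degrees, but Proposition~\ref{prop:dprod} handles both internally, so the proof reduces to the two clean induction steps above.
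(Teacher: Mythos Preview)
Your argument is correct and follows the same route as the paper: the paper simply says ``Applying Proposition~\ref{prop:dprod} recursively for the homogeneous components $Z_{n,1}$, $Z_{n,2}$, etc.,'' and you have spelled out that recursion carefully, including the observation that a basis extracted from a spanning family of iterated circle products again consists of iterated circle products (hence of $\hat h$-eigenvectors). Your treatment of the minimal generating set via $Z_n^+/(Z_n^+)^2$ is more explicit than the paper's one-line ``In particular'' but is the standard graded-algebra argument the paper is implicitly invoking.
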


In some cases, it is possible to obtain explicit generators for the invariant ring $\F[\bfy_n]^x$ as shown 
in the following example. 
%For example, $\F[\bfy_1]^x=\F[y_0]$, while $\F[\bfy_2]^x$ is generated by 
%%\[ 
 %       z_1=y_0\quad\mbox{and}\quad z_2 = 2y_0y_2-y_1^2.
%\]

\begin{example}\label{ex:z14}
    As noted in Example~\ref{ex:z1}, $Z_1=\F[y_0]$, and it is well-known that  
    \[
        Z_2=\F[y_0,\dprod{y_0}{y_0}{2}]=\F[y_1,2y_0y_2-y_1^2]
    \]
    (see~\cite{dixmier58}).
The generators for $Z_3$ were determined by Dixmier~\cite{dixmier58}, and we can write them as 
\[
        Z_3=\F[z_1,z_2,z_3,\zeta_4]
\] 
where 
\begin{align*}
    z_1 &= y_0;\\
    z_2 &= 2y_0y_2-y_1^2=y_0\circ_2 y_0;\\
    z_3 &= 3y_0^2 y_3 - 3y_0 y_1 y_2 + y_1^3=-y_0\circ_2 y_0\circ_1 y_0;\\
    \zeta_4&=\frac{z_2^3 + z_3^2}{y_0^2}=-3y_1^2y_2^2 + 8y_0y_2^3 + 6y_1^3y_3 - 18y_0y_1y_2y_3 + 9y_0^2y_3^2\\
    &=(-3/2)y_0\circ_2 y_0\circ_1 y_0\circ_3 y_0.
\end{align*}
The expressions in $\circ_d$ are left normed; 
that is, $a\circ_{d_1}b\circ_{d_2}c=(a\circ_{d_1}b)\circ_{d_2}c$. 
According to~\cite{Ooms2} The algebra $\F[\bfy_4]^x$ is generated by 
\begin{align*}
    z_1 &= y_0;\\
    z_2 &= 2y_0y_2-y_1^2=y_0\circ_2 y_0;\\
    z_3 &= 3y_0^2 y_3 - 3y_0 y_1 y_2 + y_1^3=-y_0\circ_2 y_0\circ_1 y_0;\\
    z_4 &= 2y_0y_4-2y_1y_3+y_2^2=y_0\circ_4 y_0;\\
    \zeta_5&=\frac{z_2^3+z_3^2-3y_0^2z_2z_4}{y_0^3}=2y_2^3 - 6y_1y_2y_3 + 9y_0y_3^2 + 6y_1^2y_4 - 12y_0y_2y_4\\
    &=-2y_0\circ_2 y_0\circ_4 y_0.
\end{align*}
\end{example} 

Since $\F[\bfy_n]^x$ is a noetherian ring, by the Mauer--Weitzenb\"ock Theorem~\cite[Theorem~6.1]{freudbook}, one may want to describe its Noether normalization. The algebra $\F[\bfy_2]^x$ is freely generated by $z_1$ and $z_2$ and so 
there is nothing to do. On the other hand, $\F[\bfy_3]^x$ is an integral extension of $\F[z_1,z_2,\zeta_4]$, 
or of $\F[z_1,z_3,\zeta_4]$, but not of $\F[z_1,z_2,z_3]$. This is because $z_3$ satisfies the 
integral equation $t^2+z_2^3-y_0^2\zeta_4=0$. Similarly, we obtain that $\F[\bfy_4]$ is integral over 
$\F[z_1,z_2,z_4,\zeta_5]$ or over $\F[z_1,z_3,z_4,\zeta_5]$, but not over $\F[z_1,z_2,z_3,z_4]$.  

\medskip

\noindent {\bf Problem.} Determine generators for the Noether normalization of $\F[\bfy_n]^x$ for $n\geq 5$.

\medskip

The algebra $\F[\bfy_5]$ is known to be minimally generated by 23 generators that satisfy 168 relations; see the discussion after Example~27 in \cite{Ooms1}. Using the computational algebra system Magma~\cite{Magma}, 
we determined explicit expressions in terms of the $\circ_d$ operation for $Z_5$, $Z_6$ and $Z_8$. See  Appendix~\ref{app:gens}.

\subsection{The first generator sequence}\label{sec:firstsec}
We use Proposition~\ref{cor:d_prod} to obtain an explicit generating set for the 
field of fractions of $\F[\bfy_n]^x$. Set $z_1=y_0$, and, for $i\geq 2$, define 
\[
    z_i=\left\{\begin{array}{ll} \dprod{y_0}{y_0}{i}&\mbox{if $i$ is even}\\
                    \dprod{y_0}{(\dprod{y_0}{y_0}{i-1})}1&\mbox{if $i$ is odd.}\end{array}\right.
\]

The explicit expressions for the first couple of values of the $z_i$ are 
as follows:
\begin{align*}
   % z_0&=y_0^2;\\
    z_1&=y_0;\\
    z_2&=2y_0y_2 - y_1^2;\\
    z_3&=3y_0^2y_3 - 3y_0y_1y_2 + y_1^3;\\
    z_4&=2y_0y_4 - 2y_1y_3 + y_2^2;\\
    z_5&=5y_0^2y_5 - 5y_0y_1y_4 + y_0y_2y_3 + 2y_1^2y_3 -y_1y_2^2;\\
    z_6&=2y_0y_6 - 2y_1y_5 + 2y_2y_4 - y_3^2;\\
    z_7&=7y_0^2y_7 - 7y_0y_1y_6 + 3y_0y_2y_5 - y_0y_3y_4 + 2y_1^2y_5 - 
    2y_1y_2y_4 + y_1y_3^2.
\end{align*}

Recall that the lexicographic monomial order on $\F[\bfy_n]$ is taken ordering the variables
in increasing order as $y_n,y_{n-1},\ldots,y_0$. 

\begin{lemma}\label{lem:leadtermz}
    The element $z_i\in \F[\bfy_n]^x$ for all $i\leq n$. Furthermore, the leading term of $z_i$ in the lexicographic monomial order is $2y_iy_0$ if $i$ is even, while the leading term is $i\cdot y_iy_0^2$ when $i$ is odd.  
\end{lemma}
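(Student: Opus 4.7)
The plan is to apply Proposition~\ref{prop:dprod} for membership in $Z_n$ and then extract the leading term from the explicit formula~\eqref{eq:zi_pol}. For membership, recall from~\eqref{eq:fhat} that $y_0$ is an $\hat h$-eigenvector of weight $n$; so for even $i$ the operation $y_0\circ_i y_0$ is legal provided $i\leq n$, and for odd $i$ the inner factor $y_0\circ_{i-1}y_0$ has weight $2n-2i+2$, while the outer $\circ_1$ requires $1\leq\min\{n,2n-2i+2\}$, which also holds for $i\leq n$. Proposition~\ref{prop:dprod} therefore places $z_i\in Z_n$ for all $i\leq n$.

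Next I would obtain a concrete formula for $y_0\circ_d y_0$. Using $\hat f_n(y_k)=(k+1)(n-k)y_{k+1}$, a one-line induction yields $y_{0,k}=y_k$ for all $k\leq n$, so~\eqref{eq:zi_pol} specializes to
\[
y_0\circ_d y_0=\sum_{k=0}^d(-1)^k y_k y_{d-k}.
\]
For $i$ even, the only summands involving $y_i$ are those at $k=0$ and $k=i$; both contribute $y_iy_0$ with sign $+1$, giving the leading term $2y_iy_0$.

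For $i$ odd, set $w=z_{i-1}$, so that $z_i=y_0\cdot w_{0,1}-y_1\cdot w$ with $w_{0,1}=\hat f_n(w)/(2n-2i+2)$. Applying the Leibniz rule to $\hat f_n(w)$ and shifting the summation index by $k\mapsto k+1$ in one half of the resulting sum, the coefficient of $y_ky_{i-k}$ collapses to $(i-2k)(n-i+1)$, yielding
\[
w_{0,1}=\frac12\sum_{k=0}^i(-1)^k(i-2k)y_ky_{i-k}.
\]
The summand $y_1\cdot w$ contains only the variables $y_0,\ldots,y_{i-1}$, so cannot create any $y_i$-monomial in $z_i$. In $y_0\cdot w_{0,1}$, the monomial $y_iy_0^2$ arises only from the boundary indices $k=0$ and $k=i$; each contributes $\tfrac i2\,y_iy_0^2$ (the sign $(-1)^i=-1$ in the $k=i$ term combines with $i-2i=-i$ to restore a positive $\tfrac i2$), and summing gives $iy_iy_0^2$ as the leading term.

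The principal obstacle is the algebraic collapse of $\hat f_n(w)$ to the closed form with coefficient $(i-2k)(n-i+1)$; once that identity is established, the leading-term reading is routine. As a sanity check one recovers the formulas for $z_3$ and $z_5$ in the list preceding the lemma.
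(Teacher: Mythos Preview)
Your proof is correct and follows essentially the same route as the paper's: both expand $z_i$ for odd $i$ via the Leibniz rule for $\hat f_n$ applied to $z_{i-1}=\sum_j(-1)^jy_jy_{i-1-j}$ and then read off the unique $y_i$-term. The only cosmetic difference is that you push the simplification one step further, collapsing the two sums into the closed form $w_{0,1}=\tfrac12\sum_{k=0}^i(-1)^k(i-2k)y_ky_{i-k}$, whereas the paper leaves the two sums separate and extracts the $y_i$-contributions directly from each; both computations yield the same coefficient $i$.
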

\begin{proof}
    The fact that $z_i\in\F[\bfy_n]^x$ follows from Corollary~\ref{cor:d_prod}. 
    Let us show the statement concerning the leading term. For $i\geq 2$ even, we have that 
    \[
        z_i=\sum_{j=0}^i(-1)^jy_jy_{i-j}.
    \]
    The leading term of $z_i$ is the term that contains $y_i$ which is $2y_0y_i$ as claimed. 
%    For $i\in\{0,\ldots,n\}$ we obtain that  
%    \[
%        [e,\hat f_n](y_i)=(e\hat f_n-\hat f_ne)y_i=(i+1)(n-i)y_{i}-i(n-i+1)y_i=(n-2i)y_i.
%    \]
%    That is, the pair $x,f_n$ generate a Lie algebra isomorphic to $\slg$ under the isomorphism 
%    $x\mapsto e$, $f_n\mapsto f$, and $[e,f_n]\mapsto h$. 
   Now for $i$ odd, the definition of $\circ_d$ in~\eqref{eq:zi_pol} and the definition 
   of $\hat f$ in~\eqref{eq:fhat} imply that
    \begin{align*}
        z_i&=\frac 1{2n-2i+2} y_0\hat f_n\left(\sum_{j=0}^{i-1}(-1)^jy_jy_{i-1-j}\right)-y_1\sum_{j=0}^{i-1}(-1)^jy_jy_{i-1-j}\\
        &= \frac 1{2n-2i-2}y_0\left(\sum_{j=0}^{i-1}(-1)^j(j+1)(n-j)y_{j+1}y_{i-1-j}+\right. \\&\left.\sum_{j=0}^{i-1}(-1)^j(i-j)(n-i+j+1)y_jy_{i-j}\right)\\
        &-y_1\sum_{j=0}^{i-1}(-1)^jy_jy_{i-1-j}.
    \end{align*}
    The leading term of this expression is the only term containing $y_i$, which is $iy_0^2y_i$ as claimed.
\end{proof}

\subsection{The second sequence}
We define another sequence of elements in $\F[\bfy]^x$ which appears in~\cite{sw}. (Recall we assume that the 
characteristic of $\F$ is zero.)
Set, $w_1=y_0$ and for $n\geq 1$, 
\begin{equation}\label{eq:w}
    w_n=\frac{(-1)^n}{n!}y_1^n+\sum_{i=0}^{n-1}\frac{(-1)^i}{j!}y_0^{n-1-i}y_1^iy_{n-i}.
\end{equation}
The first couple of values of the sequence $w_i$ are as follows.
{\allowdisplaybreaks
\begin{align*}
    w_1 & = y_0\\ 
    w_2 & = y_0y_2 - \frac 12 y_1^2\\
    w_3 & = y_0^2y_3 - y_0y_1y_2 + \frac 13 y_1^3\\
    w_4 & = y_0^3y_4 - y_0^2y_1y_3 + \frac 12 y_0y_1^2y_2 - \frac 18 y_1^4\\
    w_5 & = y_0^4y_5 - y_0^3y_1y_4 + \frac 12 y_0^2y_1^2y_3 - \frac 16 y_0y_1^3y_2 + \frac 1{30} y_1^5\\
    w_6 & = y_0^5y_6 - y_0^4y_1y_5 + \frac 12 y_0^3y_1^2y_4 - \frac 16 y_0^2y_1^3y_3 + \frac 1{24} y_0y_1^4y_2 - \frac 1{144}y_1^6\\
    w_7 & = y_0^6y_7 - y_0^5y_1y_6 + \frac 12 y_0^4y_1^2y_5 - \frac 16 y_0^3y_1^3y_4 + \frac 1{24}y_0^2y_1^4y_3 - \frac 1{120}y_0y_1^5y_2 + 
    \frac 1{840}y_1^7\\
    w_8 & = y_0^7y_8 - y_0^6y_1y_7 + \frac 12 y_0^5y_1^2y_6 - \frac 16 y_0^4y_1^3y_5 + \frac 1{24}y_0^3y_1^4y_4 - \frac 1{120}y_0^2y_1^5y_3 + 
    \frac 1{720}y_0y_1^6y_2 - \frac 1{5760}y_1^8.
\end{align*}}

\begin{lemma}\label{lem:wi}
    We have that $w_n\in\F[\bfy]^x$ for all $m\geq 1$. Furthermore, the leading term of 
    $w_n$ is $y_0^{i-1}y_n$. 
\end{lemma}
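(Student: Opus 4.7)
My proof plan is a direct computation: apply the derivation $x$ to the explicit expression for $w_n$ in~\eqref{eq:w} and verify that everything cancels in pairs after a single index shift. Recall that $x(y_0)=0$ and $x(y_k)=y_{k-1}$ for $k\geq 1$, so by the Leibniz rule
\[
    x(y_0^{n-1-i}y_1^iy_{n-i})=iy_0^{n-i}y_1^{i-1}y_{n-i}+y_0^{n-1-i}y_1^iy_{n-i-1},
\]
where the first term vanishes when $i=0$, and the first term of $w_n$ contributes $x((-1)^n y_1^n/n!)=\bigl((-1)^n/(n-1)!\bigr)y_0 y_1^{n-1}$. So $x(w_n)$ is a sum of three pieces: (a) the contribution just mentioned, (b) the ``$i$-derivatives'' $\sum_{i=1}^{n-1}\bigl((-1)^i/(i-1)!\bigr)y_0^{n-i}y_1^{i-1}y_{n-i}$, and (c) the ``$y_{n-i-1}$-derivatives'' $\sum_{i=0}^{n-1}\bigl((-1)^i/i!\bigr)y_0^{n-1-i}y_1^iy_{n-i-1}$.

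The plan is then to reindex piece (b) with $j=i-1$, which converts it to $\sum_{j=0}^{n-2}\bigl((-1)^{j+1}/j!\bigr)y_0^{n-1-j}y_1^jy_{n-1-j}$, and to split off the $i=n-1$ term of piece (c), which equals $\bigl((-1)^{n-1}/(n-1)!\bigr)y_0y_1^{n-1}$. After this bookkeeping, the split-off term and piece (a) cancel because $(-1)^n+(-1)^{n-1}=0$, while the reindexed piece (b) and the remaining $i=0,\dots,n-2$ terms of piece (c) are negatives of each other summand-by-summand. Hence $x(w_n)=0$, so $w_n\in\F[\bfy]^x$.

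For the leading monomial, I would simply inspect~\eqref{eq:w} in the lex order with $y_n>y_{n-1}>\cdots>y_0$: the only summand containing $y_n$ is the $i=0$ term $y_0^{n-1}y_n$ (coefficient $1$), and this dominates both the other summands (which contain only $y_2,\dots,y_{n-1}$ among the ``high'' variables) and the leading $y_1^n$ term. Thus the lex-leading term of $w_n$ is $y_0^{n-1}y_n$ as claimed.

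I do not expect a genuine obstacle here: the argument is a one-line reindexing once the derivative is written out. The only care needed is to not mistype the factorial in the general term of~\eqref{eq:w} (the denominator should be $i!$, consistent with the displayed examples $w_2,\ldots,w_8$), so that the telescoping between pieces (b) and (c) is exact.
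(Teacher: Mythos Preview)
Your computation is correct: the Leibniz expansion, the reindexing $j=i-1$ in piece~(b), the split-off of the $i=n-1$ term in piece~(c), and the two cancellations all check out, and the leading-term argument is fine. The only thing to note is that the paper itself does not carry out this calculation; it simply cites \cite[Lemma~3.1]{sw} for the invariance $x(w_n)=0$ and says the leading term is ``easily verified by inspection of~\eqref{eq:w}.'' So your proof is a genuinely self-contained replacement for what in the paper is an external reference, which is a net improvement for a reader who does not have~\cite{sw} at hand. Your closing remark about the typo in~\eqref{eq:w} (the denominator should be $i!$, not $j!$) is also well taken and is confirmed by the displayed examples $w_2,\ldots,w_8$.
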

\begin{proof}
    The claim that $w_n\in\F[\bfy]^x$ follows from \cite[Lemma~3.1]{sw}, while the claim concerning the 
    leading term is easily verified by inspection of~\eqref{eq:w}.
\end{proof}

\begin{theorem}\label{th:th11}
    If the characteristic of $\F$ is zero, then the following are valid.
    \begin{enumerate}
        \item The sets  $\{z_1,z_2,\ldots,\}$ and $\{w_1,w_2,\ldots\}$ are algebraically 
        independent in $\F[\bfy]$.
        \item The fraction field of the algebra $Z_n$ is generated by 
    $z_1,\ldots,z_n$ and also by $w_1,\ldots,w_n$. 
    \item $Z_n$ lies in $\F[y_0^{-1},z_1,z_2,\ldots,z_n]$ and also in $\F[y_0^{-1},w_1,w_2,\ldots,w_n]$.
    \item The Krull dimension of $Z_n$ is $n$.
    \end{enumerate} 
\end{theorem}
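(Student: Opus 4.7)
The plan is to invoke Theorem~\ref{th:Zgen} twice, taking $Z = Z_n$ with the sequence $z_1, \ldots, z_n$ and then with $w_1, \ldots, w_n$. The hypothesis of Theorem~\ref{th:Zgen} on leading monomials of elements of $Z$ not being divisible by $y_1$ has already been supplied by the lemma proven immediately after Corollary~\ref{cor:gi} (applied inductively up to $n$). So the task reduces to verifying, for each sequence, that its members sit inside $Z_n$ and have the shape $y_i y_0^{k_i} + z_i'$ with $z_i' \in \F[\bfy_{i-1}]$, up to a nonzero scalar.

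For the first sequence, Lemma~\ref{lem:leadtermz} already gives $z_i \in \F[\bfy_n]^x = Z_n$ for $i \leq n$ and pins down the leading term as $2y_i y_0$ (for even $i$) or $i\, y_i y_0^2$ (for odd $i$). Rescaling each $z_i$ by the inverse of its leading coefficient (this does not affect algebraic independence, fraction fields, or inclusions), the normalized elements fit the pattern $y_i y_0^{k_i} + z_i'$ with $k_i \in \{1, 2\}$ and $z_i' \in \F[\bfy_{i-1}]$. For the second sequence, Lemma~\ref{lem:wi} tells us that $w_i \in \F[\bfy]^x$ has leading term $y_0^{i-1} y_i$; since $w_i$ only involves $y_0, \ldots, y_i$, we have $w_i \in \F[\bfy_n]^x = Z_n$ for $i \leq n$, and the template applies with $k_i = i-1$. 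Thus Theorem~\ref{th:Zgen} applies to both sequences and delivers statements (2), (3) and (4) of Theorem~\ref{th:th11}, together with algebraic independence inside $\F[\bfy_n]$.

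It remains to upgrade algebraic independence to the whole polynomial ring $\F[\bfy]$ as required in statement~(1). This is formal: a nonzero polynomial relation among $\{z_1, z_2, \ldots\}$ in $\F[\bfy]$ would involve only finitely many generators, say with indices at most $n$, and thus would yield a nonzero polynomial relation among $z_1, \ldots, z_n$ inside $\F[\bfy_n]$, contradicting Theorem~\ref{th:Zgen}(3). The identical argument handles the $w$-sequence.

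The main obstacle is purely bookkeeping: confirming that the leading-term descriptions in Lemmas~\ref{lem:leadtermz} and~\ref{lem:wi} match the template required by Theorem~\ref{th:Zgen}, and that the rescaling does no damage. Since both the inclusion $Z_n \subseteq \F[\bfy_n]$ (Lemma~\ref{lem:isom}) and the leading-monomial restriction (the unnamed lemma after Corollary~\ref{cor:gi}) are already in place, Theorem~\ref{th:th11} is essentially a corollary of Theorem~\ref{th:Zgen} combined with these two leading-term computations.
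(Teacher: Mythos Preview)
Your proposal is correct and follows exactly the route the paper takes: the paper's own proof is the single sentence ``This follows from Theorem~\ref{th:Zgen}, Lemma~\ref{lem:leadtermz} and from Lemma~\ref{lem:wi}.'' Your write-up is in fact more careful than the paper's, since you make explicit the harmless rescaling needed to match the coefficient-$1$ template in Theorem~\ref{th:Zgen} and the trivial reduction from $\F[\bfy]$ to $\F[\bfy_n]$ for statement~(1); one small point you use implicitly is that for each $z_i$ (resp.\ $w_i$) the leading term is the \emph{only} term involving $y_i$, which is visible from the explicit formulas in the proofs of Lemmas~\ref{lem:leadtermz} and~\ref{lem:wi} rather than from their statements alone.
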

\begin{proof}
    This follows from Theorem~\ref{th:Zgen}, Lemma~\ref{lem:leadtermz} and from Lemma~\ref{lem:wi}.
\end{proof}

Theorem~\ref{th:th11} implies Theorem~\ref{th:th1}.

% Magma code to generate sequence 
% forward u;
% u := function( n )
%     assert IsOdd( n );
%     if n eq 3 then 
%         return  3*y(3)*y(0)-3*y(2)*y(1);
%     else 
%         return n*y(0)*y(n)-2*y(n-1)*y(1)-2*y(n-2)*y(2)-u(n-2)@sigma;
%     end if;
% end function;

% forward z;

% z := function( k )
%     zz := 0;

%     if k eq 0 then 
%         return y(0)^2;
%     elif k eq 2 then 
%         return 2*y(0)*y(2)-y(1)^2;
%     elif IsEven( k ) then 
%         return 2*y(0)*y(k)-z(k-2)@sigma;
%     else
%         return z(k-3)@sigma*y(1)+u(k)*y(0);//+(k-1)*y[k]*y[1]^2;
%     end if;
% end function;

% Theorem~\ref{} implies that $Z(\g)$ consists of elements of the form $h(y_1,z_1,\ldots,z_{n-2})/y_1^i$ 
% where $h\in\F[\bfu_{n-1}]$ such that $h(y_1,z_1,\ldots,z_{n-2})$ is divisible by $y_1^i$. For $i\geq 1$, 
% let $J_i$ denote the ideal in $\F[\bfu_{n-1}]$ containing elements $h$ such that 
% $h(y_1,z_1,\ldots,z_{n-2})$ is divisible by $y_1^i$. Then we obtain a chain of ideals 
% \begin{equation}
%     J_1\supseteq J_2\supseteq J_3\supseteq \cdots
% \end{equation}

% \begin{theorem}
%     Let $X=\{h_1,h_2,\ldots\}$ be a generating set of 
%     $J_1$ such that $X\cap J_i$ is a generating set of $J_i$ for all $i\geq 2$. 
%     Suppose that $\beta_i\in\mathbb N$ is largest such that 
%     $h_i(y_1,z_1,\ldots,z_{n-2})$ is divisible by $y_1^{\beta_i}$ for all $i\geq 1$. Then 
%     \[
%         \{h_i(y_1,\ldots,y_{n-2})/y_1^{\beta_i}\mid i\geq 1\}
% \]
% is a generating set for $Z(\g)$. 
% \end{theorem}

\section{The Hilbert series of $Z_n$}\label{sec:hilb}

In this section $\F$ is a field of characteristic zero.
Suppose, as in Section~\ref{sec:stfil}, that $V_n=\left<y_0,\ldots,y_n\right>$ is an  $\F$-vector space of dimension $n+1$ and consider $V_n$ as an $x$-module. 
The action of $x$ on $V_n$ is equivalent to the action of $e\in\slg$ on the $(n+1)$-dimensional irreducible 
$\slg$-module $U_n$ by Lemma~\ref{lem:xe}. Thus, the 
invariant ring $Z_n=\F[\bfy_n]^x$ is isomorphic to the algebra of polynomial invariants $\F[U_n]^e$ 
where $\F[U_n]$ is the polynomial algebra generated by $U_n$.  Furthermore, this isomorphism
is an isomorphism of graded algebras with respect to the grading by degree on $\F[\bfy_n]$ and 
$\F[U_n]$. For $n\geq 1$, and $d\geq 0$, let $\delta_{n,d}$ denote the dimension of the degree-$d$ homogeneous 
component of $Z_n$ (which is equal to the dimension of the degree-$d$ 
homogeneous component of $\F[U_n]^e$). Set 
\[
H_n(t)=\sum_{d=0}^\infty \delta_{n,d}t^d.
\]

Consider the basis of $U_n$ formed by $u_{i}=x_1^{i}x_2^{n-i}$ for $i\in\{0,\ldots,n\}$. 
Then 
\[
    h(u_i)=(-n+2i)u_i 
\]
and each 
monomial $m=u_0^{\alpha_0}\cdots u_n^{\alpha_n}$ in the $u_i$ is an 
eigenvector of $h$ with eigenvalue $w(m)$ (which can be calculated explicitly if needed). 
The eigenvalue $w(m)$ is also 
called the weight of $m$. The degree of such a monomial is $\sum_i\alpha_i$.
Noting that $U_{n,d}$ is isomorphic, as an $\slg$-module, to the 
$d$-th symmetric power $S^d(U_n)$ of the irreducible $\slg$-module $U_n$, 
the following result follows at once from Corollary~\ref{cor:semiinv}.

\begin{theorem}\label{th:deltand}
    $\delta_{n,d}$ is equal to the number of irreducible components of the symmetric power 
    $S^d(U_n)$ considered as an $\slg$-module. Moreover, if either $n$ or $d$  is even then 
    $\delta_{n,d}$ is equal to the number of monomials in the $u_i$ with degree $d$ and weight zero; 
   otherwise $\delta_{n,d}$ is equal to the number of such monomials with degree $d$ and weight one.
\end{theorem}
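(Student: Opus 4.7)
The plan is to leverage the $\slg$-module isomorphism between $\F[\bfy_n]$ and $\F[U_n]$ that was set up in the preamble, together with Corollary~\ref{cor:semiinv}. First, I would observe that under this isomorphism, the degree-$d$ homogeneous component of $Z_n = \F[\bfy_n]^x$ corresponds to $S^d(U_n)^e$, so $\delta_{n,d} = \dim S^d(U_n)^e$. Applying Corollary~\ref{cor:semiinv}(2) to the finite-dimensional $\slg$-module $U = S^d(U_n)$ immediately yields the first assertion: $\delta_{n,d}$ equals the number of irreducible components of $S^d(U_n)$.

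For the second assertion, I would exploit the monomial basis of $S^d(U_n)$. Under the identification of $S^d(U_n)$ with the space of degree-$d$ homogeneous polynomials in $u_0,\ldots,u_n$, the monomials $m = u_0^{\alpha_0}\cdots u_n^{\alpha_n}$ with $\sum_i \alpha_i = d$ form a basis, and since $h(u_i) = (2i-n)u_i$, the Leibniz rule gives
\[
    h(m) = w(m)\,m \quad \text{with} \quad w(m) = \sum_{i=0}^{n}(2i-n)\alpha_i = 2\sum_{i=0}^{n} i\alpha_i - nd.
\]
In particular $w(m) \equiv nd \pmod 2$, so every $h$-weight appearing in $S^d(U_n)$ has the same parity as $nd$.

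Now I would invoke Corollary~\ref{cor:semiinv}(3), which tells us $\dim U^e = \dim Y_0 + \dim Y_1$, where $Y_0$ and $Y_1$ are the $h$-eigenspaces of $S^d(U_n)$ for eigenvalues $0$ and $1$. The parity observation splits into two cases: if $n$ or $d$ is even, then $nd$ is even, all weights are even, hence $Y_1 = 0$ and $\delta_{n,d} = \dim Y_0$; if both $n$ and $d$ are odd, then $nd$ is odd, all weights are odd, hence $Y_0 = 0$ and $\delta_{n,d} = \dim Y_1$. In either case, $\dim Y_j$ is precisely the number of degree-$d$ monomials in the $u_i$ of weight $j$, since these monomials form a basis of eigenvectors, and the claim follows.

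There is no real obstacle here once Corollary~\ref{cor:semiinv} is in hand; the argument is essentially a bookkeeping exercise combining the module-isomorphism $S^d(U_n) \cong U_{n,d}$ with the parity of weights on a symmetric power. The only point requiring a touch of care is the parity lemma $w(m) \equiv nd \pmod 2$, which is why the statement naturally bifurcates according to whether $nd$ is even or odd, equivalently whether at least one of $n$, $d$ is even.
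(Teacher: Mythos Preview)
Your proof is correct and follows exactly the route the paper intends: the paper states that the theorem ``follows at once from Corollary~\ref{cor:semiinv}'' after noting $U_{n,d}\cong S^d(U_n)$, and you have simply spelled out the details of that deduction, including the parity observation $w(m)\equiv nd\pmod 2$ that explains the case split.
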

% good_partitions := function( n, d )
%
%    n0 := n-1;
%
%    parts := Partitions( d*n0 div 2 );
%    return [ x : x in parts | { u le n0 : u in x } eq { true } and #x le d ];
%end function;

\begin{example}\label{ex:n1}
    Consider for example the case of  $n=1$. Then, for each $d\geq 0$,  
    \[
        \delta_{1,d}=1
    \]
    since, for even $d$, the only weight zero monomial of degree $d$ is $u_0^{d/2}u_1^{d/2}$, while for 
    odd $d$, the only weight one monomial of degree $d$ is $u_0^{(d+1)/2}u_1^{(d-1)/2}$. This corresponds to the fact that $Z_1$ is generated by $y_0$ and the homogeneous component of degree 
    $d$ is generated by $y_0^d$ (see Example~\ref{ex:z1}). The Hilbert series is 
\[
        H_1(t)=\sum_{d=0}^\infty t^d=\frac{1}{1-t}.
\]
This shows also that $Z_1$ is generated by $y_0$ which is a generator of degree $1$
and $Z_1$ is isomorphic to the polynomial algebra $\F[t]$ in one variable $t$. 
\end{example}

\begin{example}\label{ex:n2}
    Suppose that $n=2$. Then the weight zero monomials of degree $d$ are 
    $u_0^iu_1^{d-2i}u_2^i$ with $i\in\{0,\ldots \lfloor d/2\rfloor\}$. 
    Thus, $\delta_{2,d}=\lfloor d/2\rfloor +1$ and the sequence $\delta_{2,d}$ is 
    \[
        1,1,2,2,3,3,4,4,5,5,\ldots.
    \]
   Easy computation (distinguishing between 
    $d$ odd and $d$ even) shows that 
    \[
        \delta_{2,d}=\delta_{2,d-1}+\delta_{2,d-2}-\delta_{2,d-3}
    \]
    for $d\geq 3$ with initial values $\delta_{2,0}=\delta_{2,1}=1$ and $\delta_{2,2}=2$.
    Hence, the characteristic polynomial of $\delta_{2,d}$ considered as a recursive sequence is 
    \[ 
        t^3-t^2-t+1=(1-t)(1-t^2).
    \] 
    In fact, the Hilbert series is
    \[
        H_2(t)=\sum_{d=0}^\infty\delta_{2,d}t^d=\frac 1{(1-t)(1-t^2)}
    \]
    reflecting the fact that $Z_2=\F[z_1,z_2]=\F[y_0,2y_0y_2-y_1^2]$ generated by a generator 
    of degree one and a generator of degree~two and these generators are algebraically independent.
\end{example}

For $k,d,n\in\mathbb N_0$, let $p(k,d,n)$ denote the number of partitions of $k$ with $d$ parts each of which of size at most $n$ allowing parts of size zero. 
\begin{theorem}\label{th:part}
    If $n,d\geq 0$, then $\delta_{n,d}=p(\lfloor dn/2\rfloor,d,n)=p(\lfloor dn/2\rfloor,n,d)$. Consequently, 
    $\delta_{n,d}=\delta_{d,n}$ 
\end{theorem}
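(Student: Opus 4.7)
The plan is to reduce the statement, via Theorem~\ref{th:deltand}, to a combinatorial count of monomials, and then to exhibit explicit bijections with the two partition counts. A monomial $m = u_0^{\alpha_0} u_1^{\alpha_1} \cdots u_n^{\alpha_n}$ of degree $d = \sum_i \alpha_i$ has weight $w(m) = \sum_i (2i-n)\alpha_i = 2\sum_i i\alpha_i - nd$. Writing $m$ as $u_{i_1} u_{i_2}\cdots u_{i_d}$ with $n \geq i_1 \geq i_2 \geq \cdots \geq i_d \geq 0$ records a sequence which is precisely a partition $\lambda$ with $d$ parts (zero parts allowed), each at most $n$, and $|\lambda| = \sum_j i_j$. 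This correspondence is a bijection between monomials of degree $d$ in the $u_i$ and the partitions counted by $p(\cdot, d, n)$, under which $w(m) = 2|\lambda| - nd$.

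If $nd$ is even, the weight-zero condition becomes $|\lambda| = nd/2 = \lfloor nd/2 \rfloor$, giving $\delta_{n,d} = p(\lfloor nd/2 \rfloor, d, n)$ at once. If $nd$ is odd (equivalently both $n$ and $d$ are odd), Theorem~\ref{th:deltand} asks for weight-one monomials, corresponding to $|\lambda| = (nd+1)/2 = \lceil nd/2\rceil$. To unify the two parities, I use the complementation involution $\lambda \mapsto \lambda^*$ defined by $\lambda^* = (n - i_d, n - i_{d-1}, \ldots, n - i_1)$; this is a bijection on partitions with $d$ parts each at most $n$ sending partitions of $k$ to partitions of $nd - k$, so $p(\lceil nd/2\rceil, d, n) = p(\lfloor nd/2\rfloor, d, n)$, establishing the first equality in all cases.

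For the second equality $p(\lfloor nd/2\rfloor, d, n) = p(\lfloor nd/2\rfloor, n, d)$, I invoke the classical symmetry obtained by transposing Young diagrams: a partition whose diagram fits in a $d \times n$ rectangle has a conjugate whose diagram fits in an $n \times d$ rectangle, that is, a partition with $n$ parts (zeros allowed), each at most $d$, with the same total $|\lambda|$.

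The only real care required is the parity bookkeeping, so that both the weight-zero and weight-one cases of Theorem~\ref{th:deltand} are counted by $p(\lfloor nd/2\rfloor, d, n)$; the complementation involution does exactly this, after which everything else is routine Young-diagram manipulation.
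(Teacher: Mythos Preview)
Your proof is correct and follows essentially the same route as the paper: reduce to a monomial count via Theorem~\ref{th:deltand}, identify monomials with partitions in the $d\times n$ box, and use transposition for the symmetry $p(\lfloor dn/2\rfloor,d,n)=p(\lfloor dn/2\rfloor,n,d)$. The only cosmetic difference is in the odd--odd case: the paper passes from weight~$1$ to weight~$-1$ (using the $\slg$-symmetry of weight multiplicities) to land directly on $\lfloor dn/2\rfloor$, whereas you stay with weight~$1$ and invoke the box-complement involution $\lambda\mapsto\lambda^*$ on partitions to convert $\lceil dn/2\rceil$ into $\lfloor dn/2\rfloor$; these are the same bijection viewed on different sides of the monomial--partition correspondence.
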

\begin{proof}
    Suppose that 
    $m=u_0^{\alpha_0}u_1^{\alpha_1}\cdots u_n^{\alpha_n}$ is a monomial in the $u_i$ of degree $d$;
    that is $\sum_{i\geq 0}\alpha_i=d$. 
    Define 
    the function $w'$ on the generators $u_0,\ldots,u_n$ as 
    $w'(u_i)=i$ and extend it multiplicatively to the set of monomials in $u_0,\ldots,u_n$. Then 
    $w'(m)=(w(m)+dn)/2$ where $w(m)$ is the weight (that is, the $h$-eigenvalue) of 
    the monomial $m$. If either $n$ is even or $d$ is even, then 
    $\delta_{n,d}$ is equal to the number of monomials of degree $d$ and weight zero, and if $m$ has degree $d$, then $w(m)=0$ if and only 
    if $w'(m)=dn/2$. If both $n$ and $d$ are odd, then $\delta_{n,d}$ is equal to the number of monomials 
    of degree $d$ and weight minus one (which is equal to the number of monomials of degree $d$ and weight one) and if the degree of $m$ is $d$, then $w(m)=-1$ if and only if $w'(m)=(dn-1)/2=\lfloor dn/2\rfloor$. 
    So in both cases, $\delta_{n,d}$ is equal to the number of monomials $m$ of degree $d$ and 
    $w'(m)=\lfloor dn/2\rfloor$.   
    That is, we need to count monomials $m=u_0^{\alpha_0}u_1^{\alpha_1}\cdots u_n^{\alpha_n}$
    such that 
    \[
        \sum_{i=0}^n\alpha_i=d\quad\mbox{and}\quad \sum_{i=0}^ni\cdot \alpha_i= \lfloor dn/2\rfloor. 
    \]
    Now the sequence $\alpha_0,\alpha_1,\ldots,\alpha_n$ can be identified with the partition 
    of $\lfloor dn/2\rfloor$ in which there are $\alpha_i$ parts of size $i$ for all $i\in\{0,\ldots,n\}$. 
    Omitting the parts of size zero, we obtain a correspondence between the set of such monomials $m$ 
    and  partitions of $\lfloor dn/2\rfloor$ with at most $d$ parts each of which has size at most $n$. 

    Transposing the partitions in question, we obtain that $p(\lfloor dn/2\rfloor,d,n)=p(\lfloor dn/2\rfloor,n,d)$, and this implies the final statement of the theorem.
\end{proof}

% n=2: {1, 1, -1}
% n=3 : {2, -1, 0, 1, -2, 1}
% n=4: (2,0,-1,-1,0,2,-1)?
% n=5: {2, -1, 0, 1, -2, 2, -2, 2, -2, 0, 2, -2, 2, -2, 2, -1, 0, 1, -2, 1}
% n=6: {1, 2, -1, -1, -1, -1, 0, 2, 2, 0, -1, -1, -1, -1, 2, 1, -1}

\begin{example}\label{ex:n3}
Consider the case $n=3$. Then $\delta_{3,d}$ is equal to the number of partitions of 
$\lfloor 3d/2\rfloor$ with 
at most $d$ parts of size at most $3$. The first elements of this sequence are 
$1$, $1$, $2$, $3$, $5$, $6$, $8$, $10$, $13$, $15$, $18$, $21$, $25$, $28$, $32$, $36$, etc. 
According to~\cite[A001971]{oeis}, this sequence $a(d)=\delta_{3,d}$ satisfies the recurrence relation
\[
a(d)= 2a(d-1) - a(d-2) + a(d-4) - 2a(d-5) + a(d-6).
\]
Furthermore, its generating function is 
\[
    H_3(t)=\frac{1 - t^6}{(1 - t)(1 - t^2)(1 - t^3)(1 - t^4)}.
\]
It is known that $\F[\bfy_3]^x=Z_3$ is generated by $y_0$, $2y_0y_2-y_1^2$, 
$3y_0^2 y_3 - 3y_0 y_1 y_2 + y_1^3$, 
and $-3y_1^2y_2^2 + 8y_0y_2^3 + 6y_1^3y_3 - 18y_0y_1y_2y_3 + 9y_0^2y_3^2$;
that is, a generator of degree one, one of degree 2, one of degree three, and one of degree 4
(see Example~\ref{ex:z14}).
\end{example}

\begin{example}\label{ex:n4}
Consider the case $n=4$. Then $\delta_{4,d}$ is equal to the number of partitions of 
$\lfloor 4d/2\rfloor=2d$ with 
at most $d$ parts of size at most $4$. The first elements of this sequence are 
$1$, $1$, $3$, $5$, $8$, $12$, $18$, $24$, $33$, $43$, $55$, $69$, $86$, $104$, $126$, $150$, etc.
According to~\cite[A001973]{oeis}, this sequence $a(d)=\delta_{4,d}$ satisfies the recurrence relation
\[
a(d)= 2a(d-1) - a(d-3) - a(d-4) + 2a(d-6) - a(d-7). %2*a5(d-1) - a5(d-3) - a5(d-4) + 2*a5(d-6) - a5(d-7)
\]
Furthermore, its generating function is 
\[
    H_4(t)=\frac{1+t^3}{(1-t)(1-t^2)^2(1-t^3)}
\]
It is known that $\F[\bfy_4]^x=Z_4$ is generated by 
$z_1$, $z_2$, $z_3$, $z_4$ (as defined in Section~\ref{sec:firstsec}), and 
\[
    \zeta_5=2y_2^3 - 6y_1y_2y_3 + 9y_0y_3^2 + 6y_1^2y_4 - 12y_0y_2y_4;
\] 
that is a generator of degree one, two of degree two and 
two of degree three (see Example~\ref{ex:z14}). 
\end{example}

\begin{example}\label{ex:n5}
    Consider the case $n=5$. Then $\delta_{5,d}$ is equal to the number of partitions of 
    $\lfloor 5d/2\rfloor$ with 
    at most $d$ parts of size at most $5$. The first elements of this sequence are 
    $1$, $1$, $3$, $6$, $12$, $20$, $32$, $49$, $73$, $102$, $141$, $190$, $252$, $325$, $414$, $521$, 
    $649$, $795$, $967$, $1165$, $1394$, etc.
    According to~\cite[A001975]{oeis}, this sequence $a(d)=\delta_{5,d}$ satisfies the recurrence relation
    \begin{align*}
    %[2, -1, 0, 1, -2, 2, -2, 2, -2, 0, 2, -2, 2, -2, 2, -1, 0, 1, -2, 1]
    a(d)&=2a(d-1)-a(d-2)+a(d-4)-2a(d-5)+2a(d-6)-2a(d-7)\\&+2a(d-8)-2a(d-9)+2a(d-11)-2a(d-12)+2a(d-13)\\&-2a(d-14)+
    2a(d-15)-a(d-16)+a(d-18)-2a(d-19)+a(d-20)
    \end{align*}
    Furthermore, its generating function is
    {\small 
    \[
        H_5(t)=\frac{-(t^{14} -t^{13} +2t^{12} +t^{11} +2t^{10} +3t^9 +t^8 +5t^7 +t^6 +3t^5 +2t^4 +t^3 +2t^2 -t+1)}{(t^4+1)(t^2+t+1)(t^2-t+1)(t^2+1)^2(t+1)^3(t-1)^5}.
    \]}
    It is claimed in~\cite{Ooms1,Ooms2} that $Z_5$ is generated by 23 generators.  
\end{example}

Theorem~\ref{th:th3} stated in the Introduction 
gives a closed formula for the Hilbert series $H_n(t)$ for all $n$.
The formula originates from~\cite{almkvist2} where it is derived in a different context 
with proof in~\cite{almkvist1}. We include a proof for completeness and easy reference.

%\begin{theorem}
%For $n\geq 0$, 
%\begin{equation}\label{eq:inteq}
%    H_n(t)=\frac{1}{2\pi}\int_{-\pi}^\pi\frac{1+\exp(i\varphi)}{\prod_{k=0}^n(1-t\exp(i(n-2k)\varphi))}\,d\varphi.
%\end{equation}
%\end{theorem}
\begin{proof}[The proof of Theorem~\ref{th:th3}]
Suppose that $u_{-n},u_{-n+2},\ldots,u_{n-2},u_n$ are variables such that the weight of 
$u_i$ is $i$. Suppose that $I_n$ denotes the index set $\{-n,-n+2,\ldots,n-2,n\}$. 
We denote by $\boldsymbol{\alpha}$ a vector $(\alpha_{-n},\alpha_{-n+2},\ldots,\alpha_n)$
and by $A_{d}$ the set of $\boldsymbol{\alpha}$ such that $0\leq \alpha_i\leq d$ and 
$\sum_{j\in I_n}\alpha_j=d$.   
The weight of a monomial $\prod_{j\in I_n}u_j^{\alpha_j}$ is $\sum_{j\in I_n}\alpha_j\cdot j$
and its degree is $\sum_{j\in I_n}\alpha_j$. Recall, for $n\geq 0$ and $d\geq 0$, 
 that $\delta_{n,d}$ is the number of monomials of degree $d$ and weight zero plus the number of monomials of degree $d$ and weight one in the $u_i$.  
   We are required to show that the right-hand side 
of~\eqref{eq:inteq} can be written as 
\[ 
    \sum_{d=0}^\infty\delta_{n,d}t^d.
\]

Let us first work the denominator of the function inside the integral. More specifically, we compute that  
    \begin{align*}
        \prod_{k=0}^n\frac{1}{1-t\exp(i(n-2k)\varphi)}=\prod_{k\in I_n}\frac{1}{1-t\exp(ik\varphi)}=
        \prod_{k\in I_n}\sum_{j\geq 0}{\exp(ijk\varphi)t^j}.
    \end{align*}
The product of the power series in the last expression is a power series $\sum_{d\geq 0}c_dt^d$ whose $d$-th coefficient is 
\[
    c_d=\sum_{\boldsymbol{\alpha}\in A_d}\exp\left(i\varphi\sum_{k\in I_n}k\alpha_{k}\right).
\]
The integral in~\eqref{eq:inteq} can be split into two parts:
\begin{equation}\label{eq:inteq2}
    \frac{1}{2\pi}\int_{-\pi}^\pi\frac{1}{\prod_{k=0}^n(1-t\exp(i(n-2k)\varphi))}\,d\varphi+
    \frac{1}{2\pi}\int_{-\pi}^\pi\frac{\exp(i\varphi)}{\prod_{k=0}^n(1-t\exp(i(n-2k)\varphi))}\,d\varphi.
\end{equation}
By the calculations above, the first summand in~\eqref{eq:inteq2} can be written as 
\begin{equation}\label{eq:inteq3}
    \frac{1}{2\pi}\left(\int_{-\pi}^\pi \sum_{d\geq 0}c_d\,d\varphi\right)t^d=\frac 1{2\pi}
    \sum_{d\geq 0}\left(\sum_{\boldsymbol{\alpha}\in A_d}\int_{-\pi}^\pi \exp\left(i\varphi\sum_{k\in I_n}k\alpha_{k}\right)\,d\varphi\right)t^d.
\end{equation} 
We have, for $k\in\Z$, that   
\[
    \int_{-\pi}^\pi\exp(ik\varphi)\,d\varphi=\left\{\begin{array}{ll}0 & \mbox{if $k\neq 0$}\\
        2\pi&\mbox{if $k=0$.}\end{array}\right. 
\]
Hence, among the integrals in the coefficient of $t^d$ in \eqref{eq:inteq3} only those are different from zero
which are taken for  
$\boldsymbol{\alpha}\in A_d$ with $\sum_{k\in I_n}k\alpha_k=0$. Furthermore, for such an $\boldsymbol{\alpha}$, the value of  the integral is $2\pi$ and hence, since there is a $1/(2\pi)$ factor, each such 
$\boldsymbol{\alpha}$ contributes with one to the coefficient of $t^d$.  
Now each such choice of $\boldsymbol{\alpha}$ 
corresponds to a monomial $u_{-n}^{\alpha_{-n}}\cdots u_n^{\alpha_n}$ with degree $d$ and weight zero. 
Thus, the first term in~\eqref{eq:inteq2} can be written as $\sum_{d\geq 0} a_dt^d$ where $a_d$ is the 
number of monomials in the $u_i$ of degree $d$ and weight zero.

Now consider the second summand in~\eqref{eq:inteq2} which is equal 
\begin{equation}\label{eq:inteq4}
    \frac 1{2\pi}\left(\int_{-\pi}^\pi \exp(i\varphi)\sum_{d\geq 0}c_d\,d\varphi\right)t^d=
    \frac{1}{2\pi}
    \sum_{d\geq 0}\left(\sum_{\boldsymbol{\alpha}\in A_d}\int_{-\pi}^\pi \exp\left(i\varphi \left(1+\sum_{k\in I_n}ka_{k}\right)\right)\,d\varphi\right)t^d.
\end{equation} 
For reasons similar to the ones in the previous paragraph, 
among the integrals in the coefficient of $t^d$ the ones that are different from zero are taken for   
$\boldsymbol{\alpha}\in A_d$ with $1+\sum_{k\in I_n}ka_k=0$.
Now each such choice of $\boldsymbol{\alpha}$ 
corresponds to a monomial $u_{-n}^{a_{-n}}\cdots u_n^{a_n}$ with degree $d$ and weight $-1$. 
Thus, the second term in~\eqref{eq:inteq2} can be written as $\sum_{d\geq 0} b_dt^d$ where $b_d$ is the 
number of monomials in the $u_i$ of degree $d$ and weight $-1$ which is the same as the number of monomials 
of degree $d$ and weight one. 
\end{proof}

The first summand in~\eqref{eq:inteq2} can be viewed as the Molien series 
for the invariant ring of the
compact Lie group $S^1=\{\exp(i\varphi)\mid \varphi\in[-\pi,\pi)\}$ inside $\F[\bfy_n]$
where an element $\exp(i\varphi)\in S^1$ acts on the variables $y_0,\ldots,y_n$ by multiplication with the scalars 
\[
    \exp(in\varphi),\ \exp(i(n-2)\varphi),\ldots,\exp(i(-n+2)\varphi),\ \exp(-in\varphi).
\] 
(See~\cite[Theorem~3.4.2]{DK} for Molien series for the invariant ring of finite groups.)
The second term is a translate of the same Molien series; taking such a translate  corresponds to considering monomials of weight one instead of zero.

\subsection{Computation of $\boldsymbol{H_n(t)}$ for higher values of $\boldsymbol n$}
Let $V_{n,d}$ be the space of degree-$d$ polynomials in the variables $y_0,\ldots,y_n$ considered
as an $\slg$-module. That is, $V_{n,1}$ is considered as an irreducible $\slg$-module of dimension $n+1$ with weights 
$n,n-2,\ldots,-n$ and $V_{n,d}$ is isomorphic to the symmetric power $S^d(V_{n,1})$. Then $V_{n,d}$ is spanned by monomials
of the form $y_0^{\alpha_0}\cdots y_n^{\alpha_n}$ with $\alpha_0+\cdots+\alpha_n=d$. 
As above, the weight of such a monomial is  
$n\alpha_0+(n-2)\alpha_1+\cdots-n\alpha_n$. 
Set
\[
    M_n(d,w)=\mbox{the number of monomials in $V_{n,d}$ of weight $w$}
\]
and define the generating function $\mu_n(t,z)$ as 
\[
    \mu_n(t,z)=\sum_{d\geq 0,w}^\infty M_n(d,w)t^dz^w.
\]
Then standard computation with generating functions shows that the following theorem is valid.

\begin{theorem}\label{th:mu}
    For $n\geq 1$, we have 
\begin{equation}\label{eq:mu}
    \mu_n(t,z)= \prod_{k\in\{-n,-n+2,\ldots,n-2,n\}}\frac{1}{1-tz^k}.
\end{equation}
\end{theorem}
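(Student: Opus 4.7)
The plan is to unpack the definition of $\mu_n(t,z)$ as a sum over monomials and then factor the resulting series by the independence of the exponents $\alpha_0,\ldots,\alpha_n$.

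First I would fix the weights explicitly. The variable $y_i$ has weight $n-2i$ for $i\in\{0,\ldots,n\}$, so as $i$ runs through $\{0,1,\ldots,n\}$ the set of weights is exactly the index set $\{-n,-n+2,\ldots,n-2,n\}$ appearing on the right-hand side of~\eqref{eq:mu}. A monomial $m=y_0^{\alpha_0}\cdots y_n^{\alpha_n}$ has degree $\sum_i \alpha_i$ and weight $\sum_i(n-2i)\alpha_i$.

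Next I would rewrite the generating function as a sum over monomials rather than over the pair $(d,w)$. Since $M_n(d,w)$ counts precisely the monomials of degree $d$ and weight $w$, grouping monomials by their $(d,w)$ pair gives
\[
    \mu_n(t,z)=\sum_{\alpha_0,\ldots,\alpha_n\geq 0} t^{\sum_i \alpha_i}z^{\sum_i (n-2i)\alpha_i}=\sum_{\alpha_0,\ldots,\alpha_n\geq 0}\prod_{i=0}^n (tz^{n-2i})^{\alpha_i}.
\]
Because each $\alpha_i$ ranges independently over $\mathbb N_0$, the sum factors as a product of geometric series, one for each variable $y_i$:
\[
    \mu_n(t,z)=\prod_{i=0}^n\sum_{\alpha_i\geq 0}(tz^{n-2i})^{\alpha_i}=\prod_{i=0}^n\frac{1}{1-tz^{n-2i}}.
\]
Reindexing by $k=n-2i$ yields the product over $k\in\{-n,-n+2,\ldots,n-2,n\}$ claimed in~\eqref{eq:mu}.

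The only point that requires any care is the convergence/formal-power-series interpretation: the expression $1/(1-tz^k)$ for negative $k$ must be understood as the formal series $\sum_{\alpha\geq 0}t^\alpha z^{k\alpha}$ in $t$, not as a geometric series in $z$; since each factor is a power series in $t$ with Laurent-polynomial coefficients in $z$, and since $M_n(d,w)=0$ for fixed $d$ outside a finite range of $w$, the product is well defined as an element of $\mathbb Z[z,z^{-1}][[t]]$ and the identification with $\mu_n(t,z)$ is legitimate coefficient by coefficient. No genuine obstacle arises; the result is a routine bookkeeping of monomial weights in a symmetric power.
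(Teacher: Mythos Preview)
Your proof is correct and is exactly the ``standard computation with generating functions'' that the paper alludes to without spelling out; the paper gives no details beyond that phrase, so your argument simply fills in the routine steps. Your extra remark about interpreting the factors with negative $k$ as formal power series in $t$ (so that the product lives in $\mathbb Z[z,z^{-1}][[t]]$) is a useful clarification that the paper omits.
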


\begin{example}
If $n=1$, then 
\[
\mu_1(t,z)=1+t(z^{-1}+z)+t^2(z^{-2}+1+z^2)+t^3(z^{-3}+z^{-1}+z+z^3)+ \cdots
\]
In this case we have to count monomials of even degree and weight zero and monomials of 
odd degree and weight one. These monomials are counted by the terms 
\[
    1+tz+t^2+t^3z+\cdots
\] 
showing that $\delta_{1,d}=1$ for all $d\geq 0$. This agrees with the values of $\delta_{1,d}$ computed 
in Example~\ref{ex:n1}.
\end{example}

\begin{example}
If $n=2$, then 
\begin{align*}
\mu_2(t,z)&=1 + t (z^2 + z^{-2} + 1) + t^2 (z^4 + z^{-4} + z^2 + z^{-2} + 2) \\&+ t^3 (z^6 + z^{-6} + z^4 + z^{-4} + 2 z^2 + 2z^{-2} + 2) \\&+ t^4 (z^8 + z^{-8} + z^6 + z^{-6} + 2 z^4 + 2z^{-4} + 2 z^2 + 2z^{-2} + 3) \\&+ t^5 (z^{10} + z^{-10} + z^8 + z^{-8} + 2 z^6 + 2z^{-6} + 2 z^4 + 2z^{-4} + 3 z^2 + 3z^{-2} + 3) + O(t^6)
\end{align*}
The part of this series expansion containing only the terms with $z^0$ is
\[
    1+t+2t^2+2t^3+3t^4+3t^5+\cdots
\]
and the coefficients agree with the values for $\delta_{2,d}$ computed in Example~\ref{ex:n2}.
\end{example}

\begin{example}\label{ex:zel}
Let us compute $H_3(t)$ using the method outlined by Ekhad and Zeilberger. By Theorem~\ref{th:mu} we have
that 
\[
    \mu_3(t,z)=\frac{1}{(1-z^{-3}t)(1-z^{-1}t)(1-zt)(1-z^{3}t)}=
    z^4\frac{1}{(z^3-t)(z-t)(1-zt)(1-z^{3}t)}.
\]
Using that the factors $z^3-t$, $z-t$, $1-zt$, $1-z^3t$ are coprime polynomials in the variable 
$z$ over the rational
function field 
$\Q(t)$, we have that there are polynomials $Q_1,Q_2,Q_3,Q_4\in \Q(t)[z]$ such that 
\begin{align*}
    \mu_3(t,z)&=z^4\left(\frac{Q_1}{z^3-t}+\frac{Q_2}{z+t}+\frac{Q_3}{1-zt}+\frac{Q_4}{1-z^3t}\right)\\
    &=z\frac{Q_1}{1-z^{-3}t}+z^3\frac{Q_2}{1-z^{-1}t}+z^4\frac{Q_3}{1-zt}+z^4\frac{Q_4}{1-z^3t}.
\end{align*}
The expressions $Q_i$ in the last displayed equation can be determined 
in a computational algebra system, such as Magma~\cite{Magma}, using the Extended Euclidean Algorithm. 
Writing $\mu_3(t,z)=\sum_{k\in\Z} q_k z^k$ where $q_k\in\Q(t)$, 
remembering that $n=3$ is odd, the Hilbert series $H_3(t)$ is determined by the coefficients $q_1$ and 
$q_0$ and they depend only on $Q_1$ and $Q_2$. This way one obtains the same Hilbert series as 
in Example~\ref{ex:n3}. The web page~\cite{zeilberg} contains a report on computations 
that determined the rational expressions for $H_n(t)$ for $n\leq 16$ using partial fraction decomposition of 
$\mu_n(t,z)$ as given in equation~\eqref{eq:mu}; see also the note~\cite{coins} for details.
This procedure was also implemented by the authors of the present paper and the implementation was used 
to verify the rational expressions for $H_n(t)$ in Examples~\ref{ex:n2}, and \ref{ex:n3}--\ref{ex:n5}. 
Using our Magma implementation of the procedure, we also computed the rational expression for $H_n(t)$ 
for $n\leq 18$. The computation for $n=18$ took about 10 minutes on an Intel(R) Core(TM) i5-10210U CPU 
running at 1.6~GHz.  
\end{example}

\section{Prime characteristic}\label{sec:prime}

Throughout this final section, we let $n\geq 1$ fixed, $\F$ denotes a field of characteristic $p$ with $p\geq n+1$, and $\g=\g(n+2)$ is the $(n+2)$-dimensional standard filiform Lie algebra defined over $\F$. 
Note that $U(\g)$ can be viewed as $U_{\Z}(\g)\otimes\F$ where $U_{\Z}(\g)$ is the universal enveloping 
algebra of the standard filiform Lie algebra $\g_\Z(n+2)$ defined over $\Z$. Since the elements 
$z_1,\ldots,z_n$ in Section~\ref{sec:firstsec} are central in $U_\Z(\g)$, they are also central 
in $U(\g)= U_{\Z}(\g)\otimes\F$. In characteristic $p$, however, there are other central elements that do 
not occur in characteristic zero.
Let $Z_p(\g)$ denote the $p$-center of $U(\g)$; that is 
\[
    Z_p(\g)=\F[x^p,y_0,y_1^p,\ldots,y_n^p]. 
\]
The condition that $p\geq n+1$ implies that 
$Z_p(\g)$ is isomorphic to the polynomial algebra $\F[\bft_{n+2}]$ and also that $Z_p(\g)\subseteq Z(\g)$. Let $K(\g)$ and $K_p(\g)$ denote the fraction
fields of $Z(\g)$ and $Z_p(\g)$ respectively. We also let $D(\g)$ denote the division algebra of fractions of $U(\g)$.  Suppose that $z_i$ is the sequence of elements defined in Section~\ref{sec:expgens}.
The leading coefficient of $z_i$ 
is either $2$ or $i$ by Lemma~\ref{lem:leadtermz}, which is nonzero by the assumption that $p\geq n+1$.

In the rest of this section, the elements $z_1,\ldots,z_n$ are the same as defined in Section~\ref{sec:expgens}
and recall that $z_1=y_0\in Z_p(\g)$. 

\begin{theorem}\label{th:gensp}
   We have that  $K(\g)=K_p(\g)(z_2,\ldots,z_{n})$ and $\dim_{K(\g)}D(\g)=p^2$. 
\end{theorem}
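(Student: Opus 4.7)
The plan is to establish both equalities simultaneously through a dimension count over the $p$-center $Z_p(\g)$. The first step is to show $\dim_{K_p(\g)}D(\g)=p^{n+1}$: by the Poincar\'e--Birkhoff--Witt theorem, $U(\g)$ is a free $Z_p(\g)$-module with basis $\{x^a y_1^{b_1}\cdots y_n^{b_n}:0\leq a,b_1,\ldots,b_n<p\}$ (the factor $y_0$ is already absorbed into $Z_p(\g)$), giving rank $p^{n+1}$, and localising at $Z_p(\g)\setminus\{0\}$ transports this rank to $\dim_{K_p(\g)}D(\g)$. Since $K(\g)$ is the center of the division ring $D(\g)$, one has $\dim_{K(\g)}D(\g)=m^2$ for some integer $m$ and the multiplicative identity $[K(\g):K_p(\g)]\cdot m^2=p^{n+1}$, forcing both factors to be powers of $p$. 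Since $D(\g)$ is noncommutative (for example, $[x,y_n]=y_{n-1}\neq 0$), we have $m\geq p$, hence $[K(\g):K_p(\g)]\leq p^{n-1}$.

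The crux is then to prove that $L:=K_p(\g)(z_2,\ldots,z_n)$ already attains this upper bound, namely $[L:K_p(\g)]=p^{n-1}$. The inclusion $L\subseteq K(\g)$ is automatic because each $z_i$ lifts to a central element of $U_\Z(\g)$ and survives the base change to $\F$. For the inequality $[L:K_p(\g)]\leq p^{n-1}$, I use that $z_i\in\F[y_0,\ldots,y_n]$ combined with the Frobenius identity in characteristic $p$ to conclude $z_i^p\in\F[y_0^p,\ldots,y_n^p]\subseteq Z_p(\g)$, so $L/K_p(\g)$ is purely inseparable of exponent one and generated by $n-1$ elements. For the matching lower bound, I first factor out the transcendental generator $x^p$ (which does not appear in any $z_i$) and reduce to proving $[\tilde M(z_2,\ldots,z_n):\tilde M]=p^{n-1}$ inside $\tilde M:=\F(y_0,y_1^p,\ldots,y_n^p)$. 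Lemma~\ref{lem:leadtermz} gives an explicit leading term $2y_0y_i$ or $iy_0^2y_i$ for $z_i$ whose coefficient is nonzero modulo $p$ by the hypothesis $p\geq n+1$; a recursive back-substitution then expresses each $y_i$ for $i\geq 2$ as a rational function in $y_0,y_1,z_2,\ldots,z_i$, yielding $\tilde M(y_1,z_2,\ldots,z_n)=\F(y_0,\ldots,y_n)$. Comparing in the tower $\tilde M\subseteq \tilde M(z_2,\ldots,z_n)\subseteq \tilde M(y_1,z_2,\ldots,z_n)$, the top field has index $p^n$ in $\tilde M$ while the final step adjoins only $y_1$ (whose $p$-th power lies in $\tilde M$) and so contributes degree at most $p$, forcing $[\tilde M(z_2,\ldots,z_n):\tilde M]\geq p^{n-1}$.

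Combining the two bounds, the chain $p^{n-1}=[L:K_p(\g)]\leq [K(\g):K_p(\g)]\leq p^{n-1}$ collapses to equalities, yielding both $L=K(\g)$ and $\dim_{K(\g)}D(\g)=p^{n+1}/p^{n-1}=p^2$. The main obstacle in this plan is verifying the recursive solvability of $y_2,\ldots,y_n$ in terms of $y_0,y_1,z_2,\ldots,z_n$, which rests precisely on the non-vanishing modulo $p$ of the leading coefficients $2$ and $i$ computed in Lemma~\ref{lem:leadtermz}; the hypothesis $p\geq n+1$ is designed to supply this. The remaining ingredients -- PBW freeness, the Frobenius action on polynomial rings, and standard degree arithmetic for purely inseparable extensions -- are routine bookkeeping.
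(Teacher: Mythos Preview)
Your proof is correct and follows essentially the same dimension-counting sandwich as the paper: bound $[K(\g):K_p(\g)]$ from above via $\dim_{K_p(\g)}D(\g)=p^{n+1}$ together with $\dim_{K(\g)}D(\g)\geq p^2$, and from below by showing $[K_p(\g)(z_2,\ldots,z_n):K_p(\g)]=p^{n-1}$.

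The differences are in packaging rather than strategy. The paper imports $\dim_{K_p(\g)}D(\g)=p^{n+1}$ and $\dim_{K(\g)}D(\g)\geq p^2$ from~\cite[Theorem~3.2]{LdJSch}, whereas you supply both directly (PBW freeness for the first, the squared-dimension property of central simple algebras plus noncommutativity for the second). For the lower bound $[K_p(\g)(z_2,\ldots,z_n):K_p(\g)]\geq p^{n-1}$, the paper argues inductively that each $z_i$ lies outside $K_p(\g)(z_2,\ldots,z_{i-1})$ and hence contributes a factor of $p$; you instead use a single global tower argument, back-solving $y_2,\ldots,y_n$ from the $z_i$ to identify $\tilde M(y_1,z_2,\ldots,z_n)$ with $\F(y_0,\ldots,y_n)$. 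Both arguments ultimately rest on the same fact from Lemma~\ref{lem:leadtermz}, namely that $z_i$ has leading term $c_i y_0^{k_i}y_i$ with $c_i\in\{2,i\}$ nonzero modulo~$p$. Your route is a bit more self-contained; the paper's is shorter once the citation is granted.
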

\begin{proof}
Consider the chain 
\[ 
   K_p(\g)\subset K(\g)\subset D(\g)
\]
of division algebras (where the first two terms are fields). We have that   
\[
    \dim_{K_p(\g)}D(\g)=p^{\dim \g-\dim C(\g)}=p^{n+1}\quad\mbox{and}\quad 
    \dim_{K(\g)}D(\g)\geq p^2
\] 
(see~\cite[Theorem~3.2]{LdJSch}).
   Furthermore, since $a^p\in K_p(\g)$ for all $a\in K(\g)$, 
  $K(\g)$ is a purely inseparable extension of $K_p(\g)$ of dimension $p^k$ where $k\leq n-1$. 
  Since $z_i\not\in K_p(\g)(z_2,\ldots,z_{i-1})$ but $z_i^p\in K_p(\g)$ for all $i\geq 2$ and
  $K_p(\g)(z_2,\ldots,z_i)$ is a purely inseparable extension of $K_p(\g)(z_2,\ldots,z_{i-1})$, 
  it follows that
  \[
      \dim_{K_p(\g)(z_2,\ldots,z_{i-1})}K_p(\g)(z_2,\ldots,z_{i-1},z_i)=p. 
\]
This implies that $K(\g)=K_p(z_2,\ldots,z_{n})$ and $\dim_{K(\g)}D(\g)=p^2$.
\end{proof}

\begin{proposition}
    Suppose that $R_n=Z_p(\g)[y_0^{-1},z_2,\ldots,z_{n}]$ and, for $i\in\{2,\ldots,n\}$, set 
    $\alpha_i=z_i^p$. Then $\alpha_i\in Z_p(\g)$ and  following are valid.
\begin{enumerate}
    \item $R_n\cong Z_p(\g)[y_0^{-1},t_2,\ldots,t_{n}]/(t_2^p-\alpha_2,\ldots,t_{n}^p-\alpha_{n})$.
    \item $R_n$ is a regular ring.% that satisfies Serre's condition $(R_k)$ for all $k\geq 0$. 
\end{enumerate}
Consequently, $R_n$ is a normal domain (that is, integrally closed in its field of fractions).
\end{proposition}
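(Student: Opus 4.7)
The plan for (1) is to set up the natural surjection and prove injectivity via a dimension count over the field $K_p(\g)$ supplied by Theorem~\ref{th:gensp}; the plan for (2) is to rewrite $R_n$ explicitly as a localization of a polynomial algebra, using Lemma~\ref{lem:leadtermz} to iteratively eliminate the generators $y_2^p,\ldots,y_n^p$ in favor of the $z_i$. The normality claim then follows immediately from regularity.

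The initial containment $\alpha_i = z_i^p \in Z_p(\g)$ is immediate: $z_i$ is a polynomial in the commuting elements $y_0,\ldots,y_n$, so the Freshman's dream in characteristic $p$ gives $z_i^p \in \F[y_0^p,\ldots,y_n^p] \subseteq Z_p(\g)$. For (1), define the natural map
\[
    \phi\colon Z_p(\g)[y_0^{-1}, t_2,\ldots,t_n]/(t_i^p-\alpha_i)_{i=2}^n \twoheadrightarrow R_n,\qquad t_i \mapsto z_i,
\]
which is well-defined and surjective. The source is a free $Z_p(\g)[y_0^{-1}]$-module of rank $p^{n-1}$, with basis the reduced monomials $t_2^{a_2}\cdots t_n^{a_n}$ for $0 \leq a_i < p$. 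Tensoring with $K_p(\g)$, the source becomes a $K_p(\g)$-algebra of dimension $p^{n-1}$, while the image lies in $K(\g)$, which by Theorem~\ref{th:gensp} also has dimension $p^{n-1}$ over $K_p(\g)$. A surjection between $K_p(\g)$-vector spaces of equal finite dimension is an isomorphism, so the localized $\phi$ is injective. Since the source is torsion-free over $Z_p(\g)[y_0^{-1}]$ (being free), this lifts to injectivity of $\phi$ itself.

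For (2), Lemma~\ref{lem:leadtermz} yields $z_i = c_i y_i y_0^{e_i} + g_i$ with $g_i \in \F[y_0,\ldots,y_{i-1}]$, $c_i \in \{2,i\} \subseteq \F^\times$ (the hypothesis $p \geq n+1$ is used here), and $e_i \in \{1,2\}$. The Freshman's dream gives
\[
    \alpha_i = z_i^p = c_i^p y_i^p y_0^{pe_i} + g_i^p, \qquad g_i^p \in \F[y_0^p,y_1^p,\ldots,y_{i-1}^p],
\]
so $y_i^p$ can be solved as an element of $\F[y_0^{\pm 1}, y_1^p,\ldots,y_{i-1}^p, z_i]$. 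Inducting on $i \in \{2,\ldots,n\}$ yields $y_i^p \in \F[x^p, y_0^{\pm 1}, y_1^p, z_2,\ldots, z_i]$, and consequently
\[
    R_n = \F[x^p,\, y_0^{\pm 1},\, y_1^p,\, z_2,\ldots,z_n],
\]
an $\F$-algebra on $n+2$ generators with $y_0$ inverted. By (1), $R_n$ is an integral extension of $Z_p(\g)[y_0^{-1}]$, so $\dim R_n = n+2$; being a domain (it sits inside the domain $U(\g)$), the presenting surjection $\F[T_0,T_1^{\pm 1},T_2,\ldots,T_{n+1}] \twoheadrightarrow R_n$ has kernel of height $0$ and hence zero. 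Thus $R_n$ is a localization of a polynomial ring in $n+2$ variables, and in particular regular.

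Finally, any regular ring is a normal domain (by Auslander--Buchsbaum, regular local rings are UFDs and hence integrally closed, and normality is a local property), which gives the ``Consequently'' assertion. The principal obstacle is ensuring the elimination step in (2) actually succeeds, which requires the leading coefficients $c_i$ to be units in $\F$; this is precisely where the hypothesis $p \geq n+1$ is used in an essential way.
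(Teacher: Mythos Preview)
Your proof is correct and takes a genuinely different route from the paper's.

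For part~(1), the paper argues by induction on $n$: it assumes the result for $R_{n-1}$, adjoins $y_n^p$ as a free variable, and then shows $t_n^p-\alpha_n$ is prime in $R_{n-1}[y_n^p][t_n]$ (citing an external lemma), so that the quotient is a domain and the presentation holds. You instead tensor up to $K_p(\g)$ and invoke Theorem~\ref{th:gensp} to identify the tensored target with $K(\g)$, matching dimensions at $p^{n-1}$ and descending injectivity via torsion-freeness. Your argument is cleaner but depends on Theorem~\ref{th:gensp}; the paper's is self-contained. One small wording issue: you write ``the image lies in $K(\g)$'', but you actually need (and implicitly use) that the image \emph{equals} $K(\g)$, which follows since $K_p(\g)[z_2,\ldots,z_n]$ is a finite domain over a field, hence a field, and generates $K(\g)$ by Theorem~\ref{th:gensp}. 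Also, $R_n$ does not sit inside $U(\g)$ (since $y_0^{-1}\notin U(\g)$); it sits inside $K(\g)$, which suffices.

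For part~(2), the paper applies the Jacobian criterion to the presentation from~(1): identifying $Z_p(\g)[y_0^{-1},t_2,\ldots,t_n]$ with a localized polynomial ring in $2n+1$ variables, it computes that the Jacobian matrix $(\partial f_j/\partial u_{i+2})$ is upper triangular with determinant a unit times a power of $y_0$, whence regularity at every prime. Your approach is both simpler and stronger: using Lemma~\ref{lem:leadtermz} and the Frobenius, you eliminate $y_2^p,\ldots,y_n^p$ from the generating set and exhibit $R_n$ directly as $\F[x^p,y_0^{\pm 1},y_1^p,z_2,\ldots,z_n]$, then conclude by a Krull-dimension count that this is a localized polynomial ring on the nose. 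This bypasses the Jacobian machinery entirely and makes regularity immediate. The paper's approach, by contrast, never identifies $R_n$ with a polynomial ring and works only with the presentation modulo the $p$-th-power relations.
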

\begin{proof} 
The fact that $\alpha_i\in Z_p(\g)$ follows from the fact that $\alpha_i$ is an expression in 
the commuting variables $y_0,y_1,\ldots,y_n$ and that $y_0\in Z_p(\g)$ and, for all $i\geq 1$, $y_i^p\in Z_p(\g)$.

We will prove the  assertions (1)--(2) of the lemma simultaneously by induction on $n$. 
The basis of the induction is the case when $n=1$ and the assertions in this case 
are valid since $R_1=Z_p(\g)[y_0^{-1}]\cong\F[x^p,y_0,y_0^{-1}]$ ($x^p$ and $y_0$ are algebraically independent) 
is a regular domain. Since a regular domain is also normal, we obtain that $R_1$ is a normal 
domain.
%This assertion follows from the fact that $R$ is the localization
%of the polynomial algebra $Z_p(\g)=\F[y_0,y_1^p,\ldots,y_n^p]$ which is regular. 

Suppose that assertions (1) and (2) hold for the algebra $R_{n-1}$ that corresponds to the 
standard filiform Lie algebra $\g(n+1)$.  
That is,  
\begin{align*}
    R_{n-1}&=\F[y_0^{-1},y_0,y_1^p,\ldots,y_{n}^p,z_2,\ldots,z_{n-1}]\\
    &\cong Z_p(\g(n+1))[y_0^{-1},z_2,\ldots,z_{n-1}]\\&
    \cong Z_p(\g(n+1))[y_0^{-1},t_2,\ldots,t_{n-1}]/(t_2^p-\alpha_2,\ldots,t_{n-1}^p-\alpha_{n-1}).
\end{align*} 
Further, the induction hypothesis also states that $R_{n-1}$ is regular and hence it is normal. 
Since, $y_n^p$ is a free variable over $R_{n-1}$, $R_{n-1}[y_n^p]$ is normal. 
Also note that $\alpha_i\in R_{n-1}[y_n^p]\setminus (R_{n-1}[y_n^p])^p$ and 
$t_{n}^p-\alpha_n$ is prime in $R_{n-1}[y_n^p][t_{n}]$ (see~\cite[Lemma~2.5]{LdJSch}).  
Now since $t_{n}^p-\alpha_n$ is prime, we obtain that 
%Lemma~\ref{lem:rr} implies that 
\begin{align*}
    R_n&=Z_p(\g)[y_0^{-1},z_2,\ldots,z_{n}]=R_{n-1}[y_n^p,z_{n}]\\&\cong R_{n-1}[y_n^p][t_{n}]/(t_{n}^p-\alpha_{n})\\&\cong
    Z_p(\g)[y_0^{-1},t_2,\ldots,t_{n}]/(t_2^p-\alpha_2,\ldots,t_{n}^p-\alpha_{n}). 
\end{align*}
This proves claim~(1).

Let us turn to claim~(2). 
We identify $Z_p(\g)[y_0^{-1},t_2,\ldots,t_{n}]$ with the localization $\F[\bfu_{2n+1}]_{u_2}$ 
at the multiplicative set generated by $u_2$ of the  polynomial ring $\F[\bfu_{2n+1}]$ 
in $2n+1$ variables where $x,y_0,y_1^p,\ldots,y_n^p$ are identified with the variables  
$u_1,u_2,\ldots,u_{n+2}$ and $t_2,\ldots,t_{n}$ are identified with the variables $u_{n+3},\ldots,u_{2n+1}$, 
respectively. In particular, $y_0$ corresponds to the variable $u_2$ and this is why the 
polynomial ring is localized at $u_2$. Suppose, for $i\in\{2,\ldots,n\}$ that $f_i$ is the image of the polynomial 
$t_{i}^p-\alpha_i$ in $\F[\bfu_{2n-2}]_{u_2}$. 
%For example, 
%\[
%    f_1=u_{n+1}^2-2u_1^pu_3-u_2^2.
%\]
The argument in the previous paragraph implies that 
\[
    R_n\cong\F[\bfu_{2n+1}]_{u_2}/(f_2,\ldots,f_{n}).
\]
%Furthermore $f_1,\ldots,f_{n}$ is a regular sequence and hence the ideal 
%$(f_1,\ldots,f_{n})$ has height $n-2$ {\color{red}[Check this argument.]}.
For $i=\{2,\ldots,n\}$, let $D_i$ denote the derivation $\partial/\partial u_{i+2}$
of $\F[\bfu_{2n+1}]_{u_2}$. Then we have from Lemma~\ref{lem:leadtermz} that $D_i(f_j)=0$ for all $j<i$ and 
$D_i(f_i)=c_iu_2^{k_i}$ where $k_i$ and $c_i$ are positive integers and $c_i<p$. Hence 
the Jacobian matrix $(D_if_j)$ is upper triangular and $\det (D_if_j)=cu_2^k$ where 
$c,k\in\mathbb N$ and $p\nmid c$. 

%Consider the following derivations of $R_0[t_3,\ldots,t_n]$:
%$$D_{1}=\frac{\partial}{\partial t_{n-2}},\quad
%D_{2}=\frac{\partial}{\partial t_{n-3}},\ldots, D_{n-3}=\frac{\partial}{\partial t_{4}},\quad
%D_{n-2}=\frac{\partial}{\partial t_{3}}.$$ 
%Ent\~ao, podemos determinar a matriz jacobiana $n-2\times n-2$ dada por $(D_{i}f_{j})$. Por exemplo, %para $n=8$ temos $(D_{i}f_{j})$ determinada explicitamente pela matriz 
%$$(D_{i}f_{j})=\begin{pmatrix}
%0 & 0 & 0 & 0 & 0 &  2t_{1}^{p} \\
%0 & 0 & 0 & 0 & 3t_{1}^{2p} &  -3t_{1}^{p}t_{2} \\
%0 & 0 & 0 & 2t_{1}^{p} & -2t_{2} &  2t_{3} \\
%0 & 0 & 5t_{1}^{2p} & -5t_{1}^{p}t_{2} & t_{1}^{p}t_{3}+2t_{2}^{2} &  t_{1}^{p}t_{4}-2t_{2}t_{3} \\
%0 & 2t_{1}^{p} & 2t_{2} & 2t_{3} & -2t_{4} &  2t_{5} \\ 7t_{1}^{2p} & -7t_{1}^{p}t_{7} & 3t_{1}^{p}t_{3}+2t_{2}^{2} & -t_{1}^{p}t_{4}-2t_{2}t_{3} & -t_{1}^{p}t_{5}+2t_{2}t_{4} &  3t_{1}^{p}t_{6}-2t_{2}t_{5} \\ \end{pmatrix}$$ com o determinante n\~ao nulo na forma $\lambda t_{1}^{kp}\in\mathbb{F}[t_{1}]$, $k>0$. Portanto, $\det(D_{i}f_{j})\in\mathbb{F}[t_{1}]$ e $ \det(D_{i}f_{j})\neq0$, pois $p\geq n$. Observe que generalizando, o determinante da matriz jacobiana $(D_{i}f_{j})$ de ordem $n-2$ \'e o produto dos elementos da anti-diaginal, o qual resulta em um polin\^omio n\~ao nulo e n\~ao constante em $\mathbb{F}[t_{1}]$.

Suppose that $\bar{P}\in \mbox{Spec}(R)$. Then $\bar{P}=P/\left(f_{2},\ldots, f_{n}\right)$ 
for some prime ideal $P\in \mbox{Spec}(\F[\bfu_{2n+1}]_{u_2})$ 
such that  $u_{2}\not\in P$ and $(f_{2},\ldots, f_{n})\subseteq P$. 
Thus $\det(D_{i}f_{j})\not\in P$, otherwise $u_{2}\in P$ would be true. 
The Jacobian Criterion for Regularity~\cite[Theorem 30.4]{matsumura86} implies that 
 $\F[\bfu_{2n+1}]_{P}/\left(f_{2},\ldots, f_{n}\right)_{P}$ is a regular ring; 
 that is $(R_n)_{\bar{P}}$ is a regular ring. This shows that $R_n$ is regular. Since a regular ring is normal, $R_n$ is also a normal domain.
\end{proof}

\begin{proof}[The proof of Theorem~\ref{th:th4}]
Set $R=Z_p(\g)[z_2,\ldots,z_{n}]$ and $R[y_0^{-1}]=Z_p(\g)[y_0^{-1},z_2,\ldots,z_{n}]$.
Theorem~\ref{th:gensp} implies that 
\[
    K(\g)=K_p(\g)(z_2,\ldots,z_{n})
\]
and so the fraction fields of $R$, $R[y_0^{-1}]$ and $Z(\g)$ coincide. Let us denote this field 
by $K$. For domains $A\subseteq B$, let $A^B$ denote the integral closure of $A$ in $B$. 
Since $R[y_0^{-1}]$ is integrally closed,
\[
    R^K\subseteq R[y_0^{-1}]^K=R[y_0^{-1}].
\]
This means that the integral closure $R^K$ must be contained in $R[y_0^{-1}]$ and hence 
$R^K=R^{R[y_0^{-1}]}$. 
For the other statement, recall that $Z(\g)$ is an integral extension of $R$ in $K$ and 
$Z(\g)$ is integrally closed in $K$ (by Zassenhaus' Theorem~\cite{zassenhaus53}). Thus  
\[
    Z(\g)\subseteq R^K\subseteq Z(\g)^K=Z(\g). 
\]
It follows that equality must hold in each step of the previous chain which implies that 
$Z(\g)=R^K$ as claimed.
\end{proof}

%\bibliographystyle{alpha}
%\bibliography{ref}

\begin{thebibliography}{Alm80b}

\bibitem[AF78]{almkvist1}
Gert Almkvist and Robert Fossum.
\newblock Decomposition of exterior and symmetric powers of indecomposable
  {${\bf Z}/p{\bf Z}$}-modules in characteristic {$p$} and relations to
  invariants.
\newblock In {\em S\'{e}minaire d'{A}lg\`ebre {P}aul {D}ubreil, 30\`eme
  ann\'{e}e ({P}aris, 1976--1977)}, volume 641 of {\em Lecture Notes in Math.},
  pages 1--111. Springer, Berlin, 1978.

\bibitem[Alm80a]{almkvist2}
Gert Almkvist.
\newblock Invariants, mostly old ones.
\newblock {\em Pacific J. Math.}, 86(1):1--13, 1980.

\bibitem[Alm80b]{almkvist3}
Gert Almkvist.
\newblock Reciprocity theorems for representations in characteristic {$p$}.
\newblock In {\em S\'{e}minaire d'{A}lg\`ebre {P}aul {D}ubreil et
  {M}arie-{P}aule {M}alliavin, 32\`eme ann\'{e}e ({P}aris, 1979)}, volume 795
  of {\em Lecture Notes in Math.}, pages 1--9. Springer, Berlin, 1980.

\bibitem[BCP97]{Magma}
Wieb Bosma, John Cannon, and Catherine Playoust.
\newblock The {M}agma algebra system. {I}. {T}he user language.
\newblock {\em J. Symbolic Comput.}, 24(3-4):235--265, 1997.
\newblock Computational algebra and number theory (London, 1993).

\bibitem[Bed05]{bed_arx}
L.~Bedratyuk.
\newblock Casimir elements and kernel of {W}eitzenbök derivation.
\newblock 2005.
\newblock DOI: 10.48550/arxiv.math/0512520; {P}reprint:
  arxiv.org/abs/math/0512520.

\bibitem[Bed07]{bed2}
Leonid Bedratyuk.
\newblock On complete system of invariants for the binary form of degree 7.
\newblock {\em J. Symbolic Comput.}, 42(10):935--947, 2007.

\bibitem[Bed08]{bed1}
L.~P. Bedratyuk.
\newblock The kernel of the generalized {W}eitzenb\"{o}ck derivation of a
  polynomial ring.
\newblock {\em Mat. Stud.}, 29(2):115--120, 2008.

\bibitem[Bed09]{bed3}
Leonid Bedratyuk.
\newblock A complete minimal system of covariants for the binary form of degree
  7.
\newblock {\em J. Symbolic Comput.}, 44(2):211--220, 2009.

\bibitem[Bed10]{bed_ukrj}
L.~P. Bedratyuk.
\newblock Kernels of derivations of polynomial rings and {C}asimir elements.
\newblock {\em Ukra\"{\i}n. Mat. Zh.}, 62(4):435--452, 2010.

\bibitem[Bed11a]{bed5}
Leonid Bedratyuk.
\newblock Bivariate {P}oincar\'{e} series for the algebra of covariants of a
  binary form.
\newblock {\em ISRN Algebra}, pages Art. ID 312789, 11, 2011.

\bibitem[Bed11b]{bed4}
Leonid Bedratyuk.
\newblock Weitzenb\"{o}ck derivations and classical invariant theory, {II}:
  {T}he symbolic method.
\newblock {\em Serdica Math. J.}, 37(2):87--106, 2011.

\bibitem[BI15]{bed6}
Leonid Bedratyuk and Nadia Ilash.
\newblock The degree of the algebra of covariants of a binary form.
\newblock {\em J. Commut. Algebra}, 7(4):459--472, 2015.



\bibitem[DK15]{DK}
Harm Derksen and Gregor Kemper. 
\newblock {\em Computational invariant theory}, volume 130 of {\em Encyclopaedia of Mathematical Sciences}.
\newblock Springer, Heidelberg, enlarged edition, 2015.
\newblock Invariant Theory and Algebraic Transformation Groups, VIII. 


%@book {MR3445218,
%    AUTHOR = {Derksen, Harm and Kemper, Gregor},
%     TITLE = {Computational invariant theory},
%    SERIES = {Encyclopaedia of Mathematical Sciences},
%    VOLUME = {130},
%   EDITION = {enlarged},
%      NOTE = {With two appendices by Vladimir L. Popov, and an addendum by
%              Norbert A'Campo and Popov,
%              Invariant Theory and Algebraic Transformation Groups, VIII},
% PUBLISHER = {Springer, Heidelberg},
%      YEAR = {2015},
%     PAGES = {xxii+366},
%      ISBN = {978-3-662-48420-3; 978-3-662-48422-7},
%   MRCLASS = {13A50 (12Y05 13P10 14L24 15A72 68W30)},
%  MRNUMBER = {3445218},
%MRREVIEWER = {Thomas Garrity},
%       DOI = {10.1007/978-3-662-48422-7},
%       URL = {https://doi.org/10.1007/978-3-662-48422-7},
%}

\bibitem[Dix58]{dixmier58}
Jacques Dixmier.
\newblock Sur les repr\'{e}sentations unitaires des groupes de {L}ie
  nilpotents. {III}.
\newblock {\em Canadian J. Math.}, 10:321--348, 1958.

\bibitem[dJS22]{LdJSch}
Vanderlei~Lopes de~Jesus and Csaba Schneider.
\newblock The center of the universal enveloping algebras of small-dimensional
  nilpotent {L}ie algebras in prime characteristic.
\newblock To appear in Beitr. Algebra Geom. {D}{O}{I}:
  doi.org/10.1007/s13366-022-00631-5; {P}reprint: arxiv.org/abs/2111.13432,
  2022.

\bibitem[EZ19]{coins}
Shalosh~B. Ekhad and Doron Zeilberger.
\newblock In how many ways can {I} carry a total of $n$ coins in my two
  pockets, and have the same amount in both pockets?, 2019.
\newblock DOI: 10.48550/arxiv.1901.08172; {P}reprint: arxiv.org/abs/1901.08172.

\bibitem[EZ22]{zeilberg}
Shalosh~B. Ekhad and Doron Zeilberger.
\newblock In how many ways can {I} carry a total of $n$ coins in my two
  pockets, and have the same amount in both pockets?
\newblock Exclusively published in the Personal Journal of Shalosh B. Ekhad and
  Doron Zeilberger and arxiv.org; URL:
  sites.math.rutgers.edu/~zeilberg/mamarim/mamarimhtml/change.html, accessed in
  August of 2022.

\bibitem[Fre13]{freud}
Gene Freudenburg.
\newblock Foundations of invariant theory for the down operator.
\newblock {\em J. Symbolic Comput.}, 57:19--47, 2013.

\bibitem[Fre17]{freudbook}
Gene Freudenburg.
\newblock {\em Algebraic theory of locally nilpotent derivations}, volume 136
  of {\em Encyclopaedia of Mathematical Sciences}.
\newblock Springer-Verlag, Berlin, second edition, 2017.
\newblock Invariant Theory and Algebraic Transformation Groups, VII.

\bibitem[Jac79]{jac}
Nathan Jacobson.
\newblock {\em Lie algebras}.
\newblock Dover Publications, Inc., New York, 1979.
\newblock Republication of the 1962 original.

\bibitem[Kow14]{kowalski}
Emmanuel Kowalski.
\newblock {\em An introduction to the representation theory of groups}, volume
  155 of {\em Graduate Studies in Mathematics}.
\newblock American Mathematical Society, Providence, RI, 2014.

\bibitem[Mat89]{matsumura86}
Hideyuki Matsumura.
\newblock {\em Commutative ring theory}, volume~8 of {\em Cambridge Studies in
  Advanced Mathematics}.
\newblock Cambridge University Press, Cambridge, second edition, 1989.
\newblock Translated from the Japanese by M. Reid.

\bibitem[Oom09]{Ooms1}
Alfons~I. Ooms.
\newblock Computing invariants and semi-invariants by means of {F}robenius
  {L}ie algebras.
\newblock {\em J. Algebra}, 321(4):1293--1312, 2009.

\bibitem[Oom12]{Ooms2}
Alfons~I. Ooms.
\newblock The {P}oisson center and polynomial, maximal {P}oisson commutative
  subalgebras, especially for nilpotent {L}ie algebras of dimension at most
  seven.
\newblock {\em J. Algebra}, 365:83--113, 2012.

\bibitem[SI20]{oeis}
Neil J.~A. Sloane and The OEIS~Foundation Inc.
\newblock The on-line encyclopedia of integer sequences, 2020.
\newblock URL: oeis.org.

\bibitem[\v{S}W14]{sw}
Libor \v{S}nobl and Pavel Winternitz.
\newblock {\em Classification and identification of {L}ie algebras}, volume~33
  of {\em CRM Monograph Series}.
\newblock American Mathematical Society, Providence, RI, 2014.

\bibitem[Zas54]{zassenhaus53}
Hans Zassenhaus.
\newblock The representations of {L}ie algebras of prime characteristic.
\newblock {\em Proc. Glasgow Math. Assoc.}, 2:1--36, 1954.

\end{thebibliography}

%\newpage

\appendix
\allowdisplaybreaks
\section{The generating sets of $Z_5$, $Z_6$, $Z_8$}\label{app:gens}
We computed a minimal generating set for $Z_5$ of this form consisting of the following 23 polynomials.

%
%y_0\circ_{2}y_0, y_0\circ_2 y_0\circ_1 y_0,y_0\circ_2 y_0\circ_1 y_0\circ_3 y_0]\\
%        &=\F[y_0,2y_0y_2-y_1^2,-3y_0^2 y_3 + 3y_0 y_1 y_2 - y_1^3,-6y_0^2y_3^2 + 12y_0y_1y_2y_3 - (16/3)y_0y_2^3 - 4y_1^3y_3 + 2y_1^2y_2^2];\\
%        Z_4&=\F[y_0,y_0\circ_2 y_0,y_0\circ_0 y_0\circ_3 y_0,y_0\circ_2 y_0\circ_4 y_0].
%    \end{align*}

{\small
\begin{align*}
    \mbox{degree 1: }z_{1}&=y_0;\\
   \mbox{degree 2: }z_{2}&=y_0\circ_{2}y_0;
   z_{3}=y_0\circ_{4}y_0;\\
   \mbox{degree 3: }z_{4}&=y_0\circ_{2}y_0\circ_{1}y_0;
   z_{5}=y_0\circ_{4}y_0\circ_{1}y_0;
   z_{6}=y_0\circ_{4}y_0\circ_{2}y_0;\\
   \mbox{degree 4: }z_{7}&=y_0\circ_{4}y_0\circ_{2}y_0\circ_{1}y_0;
   z_{8}=y_0\circ_{4}y_0\circ_{2}y_0\circ_{2}y_0;
   z_{9}=y_0\circ_{4}y_0\circ_{1}y_0\circ_{5}y_0;\\
   \mbox{degree 5: }z_{10}&=y_0\circ_{4}y_0\circ_{2}y_0\circ_{2}y_0\circ_{1}y_0;
   z_{11}=z_{3}^{2}\circ_{3}y_0;
   z_{12}=z_{3}^{2}\circ_{4}y_0;\\
   \mbox{degree 6: }z_{13}&=z_{3}^{2}\circ_{4}y_0\circ_{1}y_0;
   z_{14}=z_{3}^{2}\circ_{3}y_0\circ_{3}y_0;\\
   \mbox{degree 7: }z_{15}&=z_{3}^{2}\circ_{3}y_0\circ_{3}y_0\circ_{1}y_0;
   z_{16}=z_{3}^{2}\circ_{4}y_0\circ_{1}y_0\circ_{4}y_0;\\
   \mbox{degree 8: }z_{17}&=z_{3}^{}z_{12}^{}\circ_{3}y_0;
   z_{18}=z_{3}^{}z_{11}^{}\circ_{5}y_0;\\
   \mbox{degree 9: }z_{19}&=z_{3}^{}z_{12}^{}\circ_{3}y_0\circ_{2}y_0;\\
   \mbox{degree 11: }z_{20}&=z_{3}^{}z_{17}^{}\circ_{4}y_0;\\
   \mbox{degree 12: }z_{21}&=z_{3}^{2}z_{16}^{}\circ_{5}y_0;\\
   \mbox{degree 13: }z_{22}&=z_{3}^{}z_{12}^{2}\circ_{4}y_0;\\
   \mbox{degree 18: }z_{23}&=z_{3}^{2}z_{22}^{}\circ_{5}y_0;\\
   \end{align*}
}% the old gens
The minimal generating system for $Z_6$ consists of the following 26 polynomials:
{\small
\begin{align*}
    \mbox{degree 1: }z_{1}&=y_0;\\
   \mbox{degree 2: }z_{2}&=y_0\circ_{2}y_0;
   z_{3}=y_0\circ_{4}y_0;
   z_{4}=y_0\circ_{6}y_0;\\
   \mbox{degree 3: }z_{5}&=y_0\circ_{2}y_0\circ_{1}y_0;
   z_{6}=y_0\circ_{4}y_0\circ_{1}y_0;
   z_{7}=y_0\circ_{4}y_0\circ_{2}y_0;
   z_{8}=y_0\circ_{4}y_0\circ_{4}y_0;\\
   \mbox{degree 4: }z_{9}&=y_0\circ_{4}y_0\circ_{2}y_0\circ_{1}y_0;
   z_{10}=y_0\circ_{4}y_0\circ_{4}y_0\circ_{1}y_0;
   z_{11}=y_0\circ_{4}y_0\circ_{4}y_0\circ_{2}y_0;\\
   z_{12}&=y_0\circ_{4}y_0\circ_{2}y_0\circ_{6}y_0;\\
   \mbox{degree 5: }z_{13}&=y_0\circ_{4}y_0\circ_{4}y_0\circ_{2}y_0\circ_{1}y_0\
   z_{14}=y_0\circ_{4}y_0\circ_{4}y_0\circ_{2}y_0\circ_{3}y_0;\\
   z_{15}&=y_0\circ_{4}y_0\circ_{4}y_0\circ_{2}y_0\circ_{4}y_0;\\
   \mbox{degree 6: }z_{16}&=y_0\circ_{4}y_0\circ_{4}y_0\circ_{2}y_0\circ_{4}y_0\circ_{1}y_0;
   z_{17}=y_0\circ_{4}y_0\circ_{4}y_0\circ_{2}y_0\circ_{3}y_0\circ_{2}y_0;\\
   z_{18}&=z_{3}^{}z_{8}^{}\circ_{6}y_0;\\
   \mbox{degree 7: }z_{19}&=z_{8}^{2}\circ_{3}y_0;
   z_{20}=z_{8}^{2}\circ_{4}y_0;\\
   \mbox{degree 8: }z_{21}&=z_{8}^{2}\circ_{3}y_0\circ_{4}y_0;\\
   \mbox{degree 9: }z_{22}&=z_{8}^{2}\circ_{3}y_0\circ_{4}y_0\circ_{2}y_0;\\
   \mbox{degree 10: }z_{23}&=z_{8}^{2}\circ_{3}y_0\circ_{4}y_0\circ_{2}y_0\circ_{4}y_0;
   z_{24}=z_{8}^{3}\circ_{6}y_0;\\
%\end{align*}
%\begin{align*}
   \mbox{degree 12: }z_{25}&=z_{8}^{}z_{21}^{}\circ_{4}y_0;\\
   \mbox{degree 15: }z_{26}&=z_{8}^{2}z_{21}^{}\circ_{6}y_0;\\
   \end{align*}   
}

The generators of $Z_8$ are the following polynomials.
{\small
\begin{align*}
    \mbox{Degree 1: }z_{1}&=y_0;\\
   \mbox{Degree 2 :}z_{2}&=y_0\circ_{2}y_0;
   z_{3}=y_0\circ_{4}y_0;
   z_{4}=y_0\circ_{6}y_0;
   z_{5}=y_0\circ_{8}y_0;\\
   \mbox{Degree 3 :}z_{6}&=y_0\circ_{2}y_0\circ_{1}y_0;
   z_{7}=y_0\circ_{4}y_0\circ_{1}y_0;
   z_{8}=y_0\circ_{6}y_0\circ_{1}y_0;
   z_{9}=y_0\circ_{4}y_0\circ_{2}y_0;\\
   z_{10}&=y_0\circ_{6}y_0\circ_{2}y_0;
   z_{11}=y_0\circ_{6}y_0\circ_{3}y_0;
   z_{12}=y_0\circ_{6}y_0\circ_{4}y_0;
   z_{13}=y_0\circ_{4}y_0\circ_{8}y_0;\\
   \mbox{Degree 4 :}z_{14}&=y_0\circ_{4}y_0\circ_{2}y_0\circ_{1}y_0;
   z_{15}=y_0\circ_{6}y_0\circ_{2}y_0\circ_{1}y_0;
   z_{16}=y_0\circ_{6}y_0\circ_{3}y_0\circ_{1}y_0;\\
   z_{17}&=y_0\circ_{6}y_0\circ_{4}y_0\circ_{1}y_0;
   z_{18}=y_0\circ_{6}y_0\circ_{3}y_0\circ_{2}y_0;
   z_{19}=y_0\circ_{6}y_0\circ_{4}y_0\circ_{2}y_0;\\
   z_{20}&=y_0\circ_{6}y_0\circ_{4}y_0\circ_{3}y_0;
   z_{21}=y_0\circ_{6}y_0\circ_{4}y_0\circ_{4}y_0;
   z_{22}=y_0\circ_{6}y_0\circ_{3}y_0\circ_{5}y_0;\\
   z_{23}&=y_0\circ_{6}y_0\circ_{2}y_0\circ_{8}y_0;\\
   \mbox{Degree 5 :}z_{24}&=y_0\circ_{6}y_0\circ_{4}y_0\circ_{2}y_0\circ_{1}y_0;
   z_{25}=y_0\circ_{6}y_0\circ_{4}y_0\circ_{4}y_0\circ_{1}y_0;\\
   z_{26}&=y_0\circ_{6}y_0\circ_{3}y_0\circ_{5}y_0\circ_{1}y_0;
   z_{27}=y_0\circ_{6}y_0\circ_{4}y_0\circ_{3}y_0\circ_{2}y_0;\\
   z_{28}&=y_0\circ_{6}y_0\circ_{4}y_0\circ_{4}y_0\circ_{2}y_0;
   z_{29}=y_0\circ_{6}y_0\circ_{4}y_0\circ_{4}y_0\circ_{3}y_0;\\
   z_{30}&=y_0\circ_{6}y_0\circ_{3}y_0\circ_{5}y_0\circ_{3}y_0;
   z_{31}=y_0\circ_{6}y_0\circ_{4}y_0\circ_{4}y_0\circ_{4}y_0;\\
   z_{32}&=y_0\circ_{6}y_0\circ_{3}y_0\circ_{5}y_0\circ_{4}y_0;
   z_{33}=y_0\circ_{6}y_0\circ_{4}y_0\circ_{3}y_0\circ_{6}y_0;\\
   z_{34}&=z_{4}^{2}\circ_{8}y_0;\\
   \mbox{Degree 6 :}z_{35}&=y_0\circ_{6}y_0\circ_{4}y_0\circ_{3}y_0\circ_{6}y_0\circ_{1}y_0;
   z_{36}=y_0\circ_{6}y_0\circ_{4}y_0\circ_{4}y_0\circ_{4}y_0\circ_{1}y_0;\\
   z_{37}&=y_0\circ_{6}y_0\circ_{4}y_0\circ_{3}y_0\circ_{6}y_0\circ_{2}y_0;
   z_{38}=y_0\circ_{6}y_0\circ_{4}y_0\circ_{4}y_0\circ_{4}y_0\circ_{3}y_0;\\
   z_{39}&=y_0\circ_{6}y_0\circ_{3}y_0\circ_{5}y_0\circ_{4}y_0\circ_{3}y_0;
   z_{40}=y_0\circ_{6}y_0\circ_{4}y_0\circ_{4}y_0\circ_{4}y_0\circ_{4}y_0;\\
   z_{41}&=y_0\circ_{6}y_0\circ_{3}y_0\circ_{5}y_0\circ_{4}y_0\circ_{4}y_0;
   z_{42}=y_0\circ_{6}y_0\circ_{3}y_0\circ_{5}y_0\circ_{3}y_0\circ_{6}y_0;\\
   z_{43}&=z_{4}z_{12}\circ_{8}y_0;\\
   \mbox{Degree 7 :}z_{44}&=y_0\circ_{6}y_0\circ_{3}y_0\circ_{5}y_0\circ_{3}y_0\circ_{6}y_0\circ_{2}y_0;
   z_{45}=y_0\circ_{6}y_0\circ_{4}y_0\circ_{4}y_0\circ_{4}y_0\circ_{4}y_0\circ_{3}y_0;\\
   z_{46}&=y_0\circ_{6}y_0\circ_{3}y_0\circ_{5}y_0\circ_{4}y_0\circ_{4}y_0\circ_{3}y_0;
   z_{47}=y_0\circ_{6}y_0\circ_{3}y_0\circ_{5}y_0\circ_{4}y_0\circ_{4}y_0\circ_{4}y_0;\\
   z_{48}&=y_0\circ_{6}y_0\circ_{4}y_0\circ_{3}y_0\circ_{6}y_0\circ_{2}y_0\circ_{5}y_0;
   z_{49}=y_0\circ_{6}y_0\circ_{4}y_0\circ_{3}y_0\circ_{6}y_0\circ_{2}y_0\circ_{6}y_0;\\
   z_{50}&=y_0\circ_{6}y_0\circ_{3}y_0\circ_{5}y_0\circ_{4}y_0\circ_{3}y_0\circ_{6}y_0;
   z_{51}=z_{12}^{2}\circ_{8}y_0;\\
   \mbox{Degree 8 :}z_{52}&=y_0\circ_{6}y_0\circ_{3}y_0\circ_{5}y_0\circ_{4}y_0\circ_{3}y_0\circ_{6}y_0\circ_{2}y_0;\\
   z_{53}&=y_0\circ_{6}y_0\circ_{4}y_0\circ_{3}y_0\circ_{6}y_0\circ_{2}y_0\circ_{6}y_0\circ_{2}y_0;\\
   z_{54}&=y_0\circ_{6}y_0\circ_{4}y_0\circ_{3}y_0\circ_{6}y_0\circ_{2}y_0\circ_{5}y_0\circ_{4}y_0;\\
   z_{55}&=y_0\circ_{6}y_0\circ_{3}y_0\circ_{5}y_0\circ_{4}y_0\circ_{4}y_0\circ_{4}y_0\circ_{4}y_0;\\
   z_{56}&=z_{4}z_{33}\circ_{6}y_0;
   z_{57}=y_0\circ_{6}y_0\circ_{3}y_0\circ_{5}y_0\circ_{3}y_0\circ_{6}y_0\circ_{2}y_0\circ_{6}y_0;
   z_{58}=z_{4}z_{32}\circ_{8}y_0;\\
%\end{align*}
%\begin{align*}
   \mbox{Degree 9 :}z_{59}&=y_0\circ_{6}y_0\circ_{4}y_0\circ_{3}y_0\circ_{6}y_0\circ_{2}y_0\circ_{5}y_0\circ_{4}y_0\circ_{4}y_0;\\
   z_{60}&=z_{4}z_{42}\circ_{6}y_0; 
   z_{61}=z_{12}z_{33}\circ_{6}y_0;\\
   z_{62}&=y_0\circ_{6}y_0\circ_{4}y_0\circ_{3}y_0\circ_{6}y_0\circ_{2}y_0\circ_{6}y_0\circ_{2}y_0\circ_{6}y_0;
   z_{63}=z_{4}z_{41}\circ_{8}y_0;\\
   \mbox{Degree 10 :}z_{64}&=z_{4}z_{50}\circ_{6}y_0;
   z_{65}=z_{4}z_{49}\circ_{6}y_0;
   z_{66}=z_{4}z_{48}\circ_{8}y_0;\\
   \mbox{Degree 11 :}z_{67}&=z_{4}z_{56}\circ_{6}y_0;
   z_{68}=z_{4}z_{57}\circ_{6}y_0;\\
   \mbox{Degree 12 :}z_{69}&=z_{4}z_{60}\circ_{6}y_0;\\
   \end{align*}
}

In all of these computations, we used the information given in~\cite{freud} concerning the maximum degree of the polynomials in a minimal generating set. The degree-15 polynomial in the generating set for $Z_6$ has $1370$ nonzero terms, while the degree-12 polynomial in the generating set of $Z_8$ has $3651$ 
nonzero terms.
\end{document}